\renewcommand*\env@matrix[1][*\c@MaxMatrixCols c]{%
  \hskip -\arraycolsep
  \let\@ifnextchar\new@ifnextchar
  \array{#1}}
\newcommand{\labeltarget}[1]{\Hy@raisedlink{\hypertarget{#1}{}}}
\tikzset{
	ch/.style={circle,draw,on chain,inner sep=2pt},
	chj/.style={ch,join},
	}
\numberwithin{equation}{subsection}
\def\<{\langle}
\def\>{\rangle}
\newcommand{\B}{\mrm{B}}
\newcommand{\D}{\mrm{D}}
\newcommand{\Ba}[1]{    \begin{array}   #1   \end{array}}
\newcommand{\Bc}[1]{     \begin{cases}   #1   \end{cases}}
\newcommand{\cA}{\mathcal{A}}
\newcommand{\cH}{\mathcal{H}}
\newcommand{\crl}{\curvearrowleft}
\newcommand{\crr}{\curvearrowright}
\newcommand{\End}{\operatorname{End}}
\newcommand{\fgl}{\mathfrak{gl}}
\newcommand{\aff}{\mrm{aff}}
\newcommand{\Deg}{\mrm{deg}}
\newcommand{\ext}{\mrm{ext}}
\newcommand{\GL}{\mrm{GL}}
\newcommand{\Hom}{\mrm{Hom}}
\newcommand{\inv}{^{-1}}
\newcommand{\mrm}{\mathrm}
\newcommand{\Mor}{\underset{\mrm{Mor}}{\simeq}}
\newcommand{\NN}{\mathbb{N}}
\newcommand{\tif}{\textup{if }}
\newcommand{\Tr}{\mathop{\textup{tr}}}
\newcommand{\usubseteq}{\protect\rotatebox[origin=c]{90}{$\subseteq$}}
\newcommand{\dsubseteq}{\protect\rotatebox[origin=c]{270}{$\subseteq$}}
\newcommand{\wo}[1]{w_\circ(#1)}
\newcommand{\ZZ}{\mathbb{Z}}
\newcommand{\gt}{>}
\newcommand{\lt}{<}
\renewcommand{\=}[1]{\overline{#1}}
\theoremstyle{definition}
\newtheorem{Def}{Definition}[subsection]
\theoremstyle{plain}
\newtheorem{prop}[Def]{Proposition}
\newtheorem{thm}[Def]{Theorem}
\newtheorem{theorem}[Def]{Theorem}
\newtheorem{lemma}[Def]{Lemma}
\newtheorem{lem}[Def]{Lemma}
\newtheorem{cor}[Def]{Corollary}
\begin{document}

\title{Quantum wreath products and Schur--Weyl duality I}

\begin{abstract} In this paper the authors introduce a new notion called the quantum wreath product, 
which {is} the algebra $B \wr_Q \mathcal{H}(d)$ {produced} from a given algebra $B$, 
a positive integer $d$, and a choice $Q=(R,S,\rho,\sigma)$ of parameters. Important examples {that arise from our construction} include many variants of the Hecke algebras, such as the Ariki-Koike algebras, the affine Hecke algebras and their degenerate version, Wan-Wang's wreath Hecke algebras, Rosso-Savage's (affine) Frobenius Hecke algebras, Kleshchev-Muth's affine zigzag algebras, and the Hu algebra that quantizes the wreath product $\Sigma_m \wr \Sigma_2$ between symmetric groups. 

In the first part of the paper, the authors develop a structure theory for the quantum wreath products.
Necessary and sufficient conditions for these algebras to afford a basis of suitable size are obtained.
Furthermore, a Schur-Weyl duality is established via a splitting lemma and mild assumptions on the base algebra $B$.  
Our uniform approach encompasses many known results which were proved in a case by case manner. 
The second part of the paper involves the problem of constructing natural subalgebras of Hecke algebras that arise from wreath products. 
Moreover, a bar-invariant basis of the Hu algebra via an explicit formula for its extra generator is also described. 

\end{abstract}

\author{\sc Chun-Ju Lai}
\address{Institute of Mathematics \\ Academia Sinica \\ Taipei 106319, Taiwan} 
\email{cjlai@gate.sinica.edu.tw}
\thanks{Research of the first author was supported in part by
MSTC grants 113-2628-M-001-011, 112-2628-M-001-003, 111-2628-M-001-007, 109-2115-M-001-011, and the National Center of Theoretical Sciences.}

\author{\sc Daniel K. Nakano}
\address{Department of Mathematics\\ University of Georgia \\
Athens\\ GA~30602, USA}
\thanks{Research of the second author was supported in part by
NSF grant DMS-2101941}
\email{nakano@math.uga.edu}

\author{\sc Ziqing Xiang}
\address{ Department of Mathematics and National Center For Applied Mathematics Shenzhen\\ Southern University of Science and Technology \\Shenzhen 518055, China} 
\thanks{Research of the third author was  supported by NSFC (12350710787) and NSFC (12471311)}
\email{xiangzq@sustech.edu.cn}

\keywords{}
\subjclass{}

\maketitle


\section{Introduction}

\subsection{}\label{S:motivation} The Iwahori-Hecke algebra $\cH_q(W)$ is a $q$-deformation of the group algebra $KW$ of the Coxeter group $W$ which was later extended to a deformation of a complex reflection group $W$ (see \cite{BMR98}). In particular, it was established independently \cite{BM93, AK94} by {Brou\'e}-Malle and Ariki-Koike that a (quantum) algebra, called either the cyclotomic Hecke algebra or the Ariki-Koike algebra, deforms the complex reflection group $G(m,1,d) = C_m \wr \Sigma_d$ (i.e., the wreath product of a cyclic group $C_m$ by a symmetric group $\Sigma_d$). 
The Ariki-Koike algebra affords a theory that is closely connected to the affine Hecke algebra of type A. 

For finite groups, there is a well-established theory involving subgroups and their relationship to the ambient group. However, for Hecke algebras, there are very few constructions of natural subalgebras that 
arise within these algebras. For example, an open problem is to develop a Sylow theory for Hecke algebra when $q$ is specialized to a root of unity. A fundamental construction of Sylow subgroups for the 
symmetric group involves wreath products so a natural question to ask is whether one can make constructions of wreath products using Hecke algebras. The following problems will constitute the focus of this paper.

\begin{enumerate}[(A)]

\item Developing a theory of wreath products between (quantum) algebras that provides a uniform treatment to all Hecke-like algebras, including but not limited to the Ariki-Koike algebras, the affine Hecke algebras of type A and their (degenerate) variants, Rosso-Savage's (affine) Frobenius Hecke algebras, affine zigzag algebras, and the Hu algebra;

\item Constructing ``the Hecke algebra'' $\cH_{m\wr d}$ that arises from the wreath product $\Sigma_m \wr \Sigma_d$ between the symmetric groups in the sense that the chain of subgroups $\Sigma_m^d \subseteq \Sigma_m \wr \Sigma_d \subseteq \Sigma_{md}$ deforms to a chain of subalgebras $\cH_q(\Sigma_m^d) \leq \cH_{m \wr d} \leq \cH_q(\Sigma_{md})$, where $\cH_{m \wr d}$ affords a standard basis and a bar-invariant basis, both indexed by $\Sigma_m \wr \Sigma_d$.
\end{enumerate}

Throughout this paper, we consider algebras {that are free} over a commutative ring $K$. {All tensor products are considered to be over $K$ unless specified otherwise.}

\subsection{The Hu Algebra} Our paper was initially motivated by the work of Jun Hu (see \cite{Hu02}). Hu constructed an algebra $\cA(m)$ in order to state a Morita equivalence theorem between the Hecke algebras of type D and of type A (under an invertibility condition),
\eq \label{eq:HD}
\cH_q(W(\D_{2m})) \Mor  
\cA(m)\oplus \prod_{i=1}^{m-1} \cH_q(\Sigma_i) \otimes \cH_q(\Sigma_{2m-i}).
\endeq
In Hu's first construction, he considered an equation in the type B Hecke algebra of unequal parameters $(1,q)$ that involves the Jucys-Murphy elements, and 
proved that there exists a unique element $H_1$ (called $h_m^*$ therein) in the type A Hecke algebra $\cH_q(\Sigma_{2m})$ that solves the equation, without an explicit formula for $H_1$.
The  {\em Hu algebra} is defined as the subalgebra $\cA(m) \subseteq \cH_q(\Sigma_{2m})$ generated by $H_1$ as well as the parabolic subalgebra $\cH_q(\Sigma_m\times \Sigma_m)$. 
The Hu algebra should be regarded as the canonical Hecke algebra for $\Sigma_{m} \wr \Sigma_2$. Recall that $\Sigma_m \wr \Sigma_2 \subseteq \Sigma_{2m}$ is generated by  $\Sigma_m \times \Sigma_m$ together with a ``thickened braid'' $t_1 := w_{m,m}$ described as below: 
\eq
w_{m,m} = 
\raisebox{-0.8cm}{
\begin{tikzpicture}
\pic[
braid/.cd,
number of strands=6,
height = 0.25cm,
border height = 0.25cm,
gap=-0.1,
control factor=0,
nudge factor=0,
name prefix=braid,
] at (0,0) {braid={s_3 s_4-s_2 s_1-s_3-s_5 s_2-s_4 s_3}};
\node[at=(braid-4-e), above] {1};
\node[at=(braid-5-e), above] {$\dots$};
\node[at=(braid-6-e), above] {$m$};
\node[at=(braid-1-e), above] {$m+1$};
\node[at=(braid-2-e), above] {$\dots$};
\node[at=(braid-3-e), above] {$2m$};
\end{tikzpicture}
}
=
\raisebox{-0.8cm}{\begin{tikzpicture}
\pic[
braid/.cd,
number of strands=2,
height = 1cm,
width = 2cm,
border height = 0.3cm,
line width = 3pt,
gap=-0.1,
control factor=0,
nudge factor=0,
name prefix=braid,
] at (0,0) {braid={s_1}};
\node[at=(braid-2-e), above] {$[1,m]$};
\node[at=(braid-1-e), above] {$[1,m]+m$};
\end{tikzpicture}
}.
\endeq

It is proved that $H_1$ specializes to $2^m t_1$ at $q=1$.
Moreover, $\cA(m)$ admits a presentation with generators $H_1$ and $T_i$'s subject to the wreath relations $H_1 T_i = T_{i'} H_1$ where $i'$ is either $i+m$ or $i-m$, as well as a ``quadratic relation'' of the form $H_1^2 = z_{m,m}$ for some central element $z_{m,m} \in Z(\cH_q(\Sigma_m \times \Sigma_m))$.

The key observation for constructing a general wreath product is that one should accommodate quadratic relations of the form:
$H^2 = SH+ R$, where $R,S$ are determined by certain elements in $B \otimes B$ for some algebra $B$; in contrast, in the usual quadratic relation $H^2 = (q-1)H+q$, $R,S$ are always scalars in the base ring.
\subsection{Quantum Wreath Product} 
We address the main problems in Section~\ref{S:motivation} simultaneously by introducing a new notion which we call the {\em quantum wreath product}.
More specifically, starting from an algebra $B$, a given {integer $d \ge 2$}, and a choice $Q$ of parameters, our quantum wreath product produces an algebra $B \wr \cH(d) = B \wr_Q \cH(d)$ generated by (lower level) tensor algebra $B^{\otimes d}$ together with (upper level) Hecke-like generators $H_1, \dots, H_{d-1}$ (see Definition~\ref{def:Qwr}). Other salient features are listed below. 

\begin{enumerate}[(A)]
\item 
This procedure $B \mapsto B \wr_Q \cH(d)$ that produces an associative algebra can be thought of as a quantization of the wreath product $G\mapsto G \wr \Sigma_d$. 
In particular, as long as $B$ is a deformation of the group algebra of a finite group $G$, our quantum wreath product $B \wr \cH(d)$ is a deformation of the group algebra of $G \wr \Sigma_d$.
\item In general, $B$ does not need to be finite-dimensional nor commutative. For instance, in the Hu algebra case, the base algebra $B= \cH_q(\Sigma_d)$ is not commutative. 
In another case when $B = K[X^{\pm1}]$ is the Laurent polynomial {ring}, there is a quantum wreath product $K[X^{\pm1}] \wr \cH(d)$ that realizes the affine Hecke algebra of type A.
\item We are able to determine the necessary and sufficient conditions on the choice of parameters for $B \wr \cH(d)$ to admit bases of the ``right'' size (see Theorem~\ref{thm:basis}).
Our proof of the basis theorem relies on a key lemma having a similar flavor as Kashiwara's grand loop induction (see Lemma~\ref{lem:loop}).
\end{enumerate}
In \cite{E22}, Elias proved a variation of our basis theorem for his Hecke-type categories (see Section~\ref{sec:HC}, Section~\ref{sec:RelBasis}) in the following sense: 
(1) Elias' proof uses Bergman's diamond lemma for algebras over commutative rings, i.e., the statement is valid for those quantum wreath products whose base algebras are commutative. Thus, Elias' basis theorem does not imply our basis theorem. 
(2) His basis theorem is focused on describing the minimal set of ambiguities, which corresponds to the first step of our proof. In contrast, we proceed and then determine the conditions on the choice of parameters so that these ambiguities are resolvable. Such a determination of coefficients was not pursued in \cite{E22}.
(3) In contrast to Elias' proof which is based on the diamond lemma, we provide a Humphreys-type proof based on an abstract version of the regular representation.


In Table~\ref{tab:main} we provide a list of known examples that manifest themselves as quantum wreath products which are deformed from group or monoid algebras.
\begin{table}[!h]
\centering
\scalebox{0.8}{
\begin{tabular}{|l|l|l|}
\hline
Groups and monoids & Deformed algebras&$B\wr \cH(d)$
\\[0.1cm]
\hline
Wreath product $\Sigma_m \wr \Sigma_2$ & Hu algebra &$\cH_{m \wr 2} = \cA(m)$
\\
&& \quad$= \cH_q(\Sigma_m) \wr \cH(2)$
\\[0.1cm]
\hline
Complex reflection group $G(m,1,d) $
& Ariki-Koike algebra,  e.g.& $\cH_{m,d} = \frac{K[X^{\pm1}]\wr \cH(d)}{\langle(\prod_{i=1}^m (X-q_i))\otimes 1^{\otimes d-1}\rangle} $
\\
\quad e.g. Weyl group of type A/B/C
&
\quad Hecke algebra of type A/B/C&
\\
$G(m,1,d) = C_m\wr \Sigma_d$ & Yokonuma-Hecke algebra &$Y_{m,d} = KC_m \wr \cH(d)$
\\[0.1cm]
\hline
Extended affine symmetric group 
&Extended affine Hecke algebra &$\cH_q^\ext(\Sigma_d)$   
\\
$\ZZ \wr \Sigma_d = P \rtimes \Sigma_d$&&$\quad= K[X^{\pm1}]\wr \cH(d)$
\\
\quad$\usubseteq$ &\quad$\usubseteq$&\quad$\usubseteq$
\\
\quad  affine symmetric group  $\Sigma^\aff_d = Q \rtimes \Sigma_d$
&\quad  affine Hecke algebra   &$\cH_q(\Sigma^\aff_d)$
\\[0.1cm]
\hline
$\NN \wr \Sigma_d = P^+ \rtimes \Sigma_d$
&Degenerate affine Hecke algebra&$\cH^{\Deg}(\Sigma_d) = K[X]\wr \cH(d)$
\\[0.1cm]
\hline
Affine Weyl groups of type B/C&&
\\
$W(C^\aff_d) = W(\Sigma^\aff_2)\wr \Sigma_d \subseteq (\ZZ \wr \Sigma_2) \wr \Sigma_d$
&Affine Hecke algebra of type C &$\cH^\aff(C_d) \twoheadleftarrow K[X^{\pm1}, Y^{\pm1}]\wr \cH(d)$ 
\\
\quad$\usubseteq$ &&
\\
\quad $W^\ext(B_d) =  \ZZ^d \rtimes (\Sigma_2 \wr \Sigma_d)$
&\quad Extended AHA of type B
&$\cH^\ext(B_d) \subseteq K[X^\pm] \wr \cH(2d)$
\\[0.1cm]
\hline
Finite wreath $H:=G \wr \Sigma_d$ 
& Group algebra &$KH = KG \wr \cH(d)$
\\
& Wreath Hecke algebra & $\cH^\Deg(H) = K[X]G \wr \cH(d)$
\\[0.1cm]
\hline
\end{tabular}
}
\caption{Examples of quantum wreath products which deform from groups or monoids.}
\label{tab:main}
\end{table}

{In the literature,
Kleshchev-Muth first introduced in \cite{KM19} the {\em rank $n$ affinization} as a general approach to certain variants of degenerate affine Hecke algebras. 
Their main example are the affine zigzag algebras.
Savage introduced the {\em affine wreath algebras} in \cite{Sa20} as another unifying approach to some other variants of degenerate affine Hecke algebras,  including Wan-Wang's wreath Hecke algebras \cite{WW08}. 
Motivated by the endomorphism algebras in the (quantum) Frobenius Heisenberg categories, Rosso and Savage later developed in \cite{RS20} a quantum version which they call the {\em quantum affine wreath algebras} (or {\em affine Frobenius Hecke algebras}).
Their theory recovers the action of the quantum Heisenberg category of \cite{BSW20} on module categories for Ariki-Koike algebras.
Also, their theory applies to the affine Sergeev algebra (i.e., the degenerate affine
Hecke-Clifford algebra) and Evseev-Kleshchev's super wreath product algebra appearing in their proof \cite{EK17} of Turner's conjecture.
}

There have been papers written by Banica, Bichon, Lemeux, and Tarrago  on ``free wreath product quantum groups”. Our construction of the quantum wreath product differs from this earlier notion. These authors considered the wreath product of a Hopf algebra with a quantum permutation group. Our quantum wreath product is the wreath product of an associative algebra with Hecke type algebras. 

\subsection{Additional Features of Quantum Wreath Products} 
We provide a brief summary of some additional significant properties of our quantum wreath product. 
\begin{enumerate}[(A)]
\item
{
Our theory not only provides a natural generalization of the theory of Rosso-Savage, and of Kleshchev-Muth (see Section~\ref{ex:RS}--\ref{ex:KM}), 
but also has the advantage of being able to treat a multitude of Hecke-like algebras and their affinizations (degenerate or not) as a result of taking a single quantum wreath product.
In particular, our general definition of quantum wreath product encompasses the three definitions in \cite{Sa20, RS20} for their unifying theory.
Furthermore, our theory applies to the Hu algebras, which are not covered by previous theory due to its intriguing quadratic relation.
}

\item
{
Another important feature of having a quantum wreath product realization is that the Schur-Weyl duality (i.e., double centralizer property) almost come for free,
in the sense that the construction of $B \wr \cH(d)$ already encodes necessary information for a Schur duality.
By combining Theorem~\ref{thm:SchurEquiv}, Proposition~\ref{prop:struc}, and Lemma~\ref{lem:QSchur}, we obtain a machinery to establish a Schur duality for $B \wr \cH(d)$ in the following sense, as long as the base algebra $B$ is symmetric:
}
\[
\begin{tikzcd}
\textup{Splitting for }B \arrow[rr, "\textup{Additional assumptions}", "\textup{in Lemma~\ref{lem:QSchur}}"', Rightarrow] \arrow[d, Rightarrow] &&\textup{Splitting for }A:=B\wr \cH(d)  \arrow[d, "\textup{Theorem~\ref{thm:SchurEquiv}}", Rightarrow]
\\
\textup{Double centralizer property for }B && \textup{Double centralizer property for }A
\end{tikzcd}
\]
{
Note that  Theorem~\ref{thm:SchurEquiv} also applies in the following scenario, which turns out to be useful for the study of the Hu algebras:} 
\[
\begin{tikzcd}
\textup{Splitting for }A \arrow[rr, "\textup{Additional assumptions}", "\textup{in Lemma~\ref{lem:split}}"', Rightarrow] \arrow[d, Rightarrow] &&\textup{Splitting for }A' \subseteq A  \arrow[d, "\textup{Theorem~\ref{thm:SchurEquiv}}", Rightarrow]
\\
\textup{Double centralizer property for }A && \textup{Double centralizer property for }A'
\end{tikzcd}
\]
{Furthermore, we obtain a Schur functor from the existence of the splitting, see Section \ref{sec:SF}.}
\end{enumerate}


\subsection{Bar-Invariant Basis Arising from $\Sigma_m \wr \Sigma_2$} 
In this paper, we obtain a new explicit formula for $H_1$ as a sum of $2^m$ terms in which every term is a quantization of $t_1 := w_{m,m}$ after normalization. 
For example, 
\eq
H_1(1) = T_1 + T_1^{-1} 
=\raisebox{-0.25cm}{
\begin{tikzpicture}
\pic[
braid/.cd,
number of strands=2,
height = 0.3cm,
width = 0.6cm,
border height = 0.3cm,
gap=0.1,
control factor=0,
nudge factor=0,
name prefix=braid,
] at (0,0) {braid={s_1}};
\node[at=(braid-2-e), above] {1};
\node[at=(braid-1-e), above] {2};
\end{tikzpicture} 
}
+
\raisebox{-0.25cm}{
\begin{tikzpicture}
\pic[
braid/.cd,
number of strands=2,
height = 0.3cm,
width = 0.6cm,
border height = 0.3cm,
gap=0.1,
control factor=0,
nudge factor=0,
name prefix=braid,
] at (0,0) {braid={s_1^{-1}}};
\node[at=(braid-2-e), above] {1};
\node[at=(braid-1-e), above] {2};
\end{tikzpicture} 
}
,
\endeq
and
\eq
H_1(2) = 
\raisebox{-0.5cm}{
\begin{tikzpicture}
\pic[
braid/.cd,
number of strands=4,
height = 0.25cm,
width = 0.6cm,
border height = 0.25cm,
gap=0.15,
control factor=0,
nudge factor=0,
name prefix=braid,
] at (0,0) {braid={s_2 s_3 -s_1 s_2}};
\node[at=(braid-3-e), above] {1};
\node[at=(braid-4-e), above] {$2$};
\node[at=(braid-1-e), above] {$3$};
\node[at=(braid-2-e), above] {$4$};
\end{tikzpicture}}
+
\raisebox{-0.5cm}{
\begin{tikzpicture}
\pic[
braid/.cd,
number of strands=4,
height = 0.25cm,
width = 0.6cm,
border height = 0.25cm,
gap=0.15,
control factor=0,
nudge factor=0,
name prefix=braid,
] at (0,0) {braid={s_2 s_3 -s_1^{-1} s_2^{-1}}};
\node[at=(braid-3-e), above] {1};
\node[at=(braid-4-e), above] {$2$};
\node[at=(braid-1-e), above] {$3$};
\node[at=(braid-2-e), above] {$4$};
\end{tikzpicture}}
+
\left(
\raisebox{-0.5cm}{
\begin{tikzpicture}
\pic[
braid/.cd,
number of strands=4,
height = 0.25cm,
width = 0.6cm,
border height = 0.25cm,
gap=0.15,
control factor=0,
nudge factor=0,
name prefix=braid,
] at (0,0) {braid={s_2^{-1} s_3^{-1}-s_1 s_2}};
\node[at=(braid-3-e), above] {1};
\node[at=(braid-4-e), above] {$2$};
\node[at=(braid-1-e), above] {$3$};
\node[at=(braid-2-e), above] {$4$};
\end{tikzpicture}
}
+
\raisebox{-0.5cm}{
\begin{tikzpicture}
\pic[
braid/.cd,
number of strands=4,
height = 0.25cm,
width = 0.6cm,
border height = 0.25cm,
gap=0.15,
control factor=0,
nudge factor=0,
name prefix=braid,
] at (0,0) {braid={s_2^{-1} s_3^{-1}-s_1^{-1} s_2^{-1}}};
\node[at=(braid-3-e), above] {1};
\node[at=(braid-4-e), above] {$2$};
\node[at=(braid-1-e), above] {$3$};
\node[at=(braid-2-e), above] {$4$};
\end{tikzpicture}}
\right) 
.
\raisebox{-0.5cm}{
\begin{tikzpicture}
\pic[
braid/.cd,
number of strands=4,
height = 0.25cm,
width = 0.6cm,
border height = 0.25cm,
gap=0.15,
control factor=0,
nudge factor=0,
name prefix=braid,
] at (0,0) {braid={s_3 s_3}};
\node[at=(braid-3-e), above] {3};
\node[at=(braid-4-e), above] {$4$};
\node[at=(braid-1-e), above] {$1$};
\node[at=(braid-2-e), above] {$2$};
\end{tikzpicture}}
.
\endeq
As a consequence, we discovered that $\cA(m)$ is closed under the bar-involution, has a bar-invariant basis, in which a positivity with respect to the dual canonical basis is observed. 
In contrast, it is not known whether groups of the form $\Sigma_m \wr \Sigma_2$ afford a canonical basis theory since they are generally not complex reflection groups.

Moreover, with the explicit formula for $H_1$, we are able to construct the generalized Hu algebra denoted by $\cH_{m \wr d}$. 
It is not clear at this time whether these algebras can be written as a quantum wreath product for $d \geq 3$. 


\subsection{}
In the sequel, we will use the representation theory of $\cH_{m\wr 2}$ to solve the type D analog of a conjecture by Ginzburg-Guay-Opdam-Rouquier in \cite{GGOR03}.
This problem entails constructing a quasi-hereditary 1-cover for Hecke algebras of type $D_{n}$ via the Schur functor for a module category of the Schur algebra.
When $n$ is odd, the solution is very similar to the type B approach developed in \cite{LNX22}, in light of a Morita equivalence theorem in \cite{Pa94};
for type $D_{2m} = G(2,2,2m)$. Our approach relies on constructing a 1-cover for  the Hu algebra $\cA(m)$ via the corresponding Schur algebra.
We expect that a similar theory holds for type $G(d,d,dm)$, in light of Hu-Mathas' generalization of $\cA(m)$ to a more general complex reflection group in \cite{HM12}.
\endrmk
\subsection{} 
The paper is organized as follows. 
In Section \ref{sec:pre} we recall the preliminaries, including important examples of the wreath product of groups.
In Section \ref{sec:Wreath}, we introduce the definition of our quantum wreath product;
while in Section \ref{sec:examples} we identify various Hecke-like algebras as quantum wreath products (see the Appendix \ref{sec:app} for definitions of these algebras). 
Section \ref{sec:RepA} is devoted to the basis theorem. In particular, we determine the necessary and sufficient conditions for a quantum wreath product to admit a basis of the ``right'' size.
See Appendix \ref{sec:loopproof} for details of the proof.
In Section \ref{sec:Hu}, we obtain a new explicit formula for the special generator of the Hu algebra, which leads to a bar-invariant basis, as well as a Hecke subalgebra which quantizes the wreath product between symmetric groups.
In Section \ref{sec:SchurW}, we develop a Schur-Weyl duality for quantum wreath product  $A = B \wr \cH(d)$ from those for the (symmetric) algebra $B$.
\vskip .25cm
\noindent {\bf Acknowledgements.}
We acknowledge Valentin Buciumas, Jun Hu, Hankyung Ko, Eric Marberg, Andrew Mathas, Catharina Stroppel and Weiqiang Wang,  for insightful discussions regarding various details of the paper.
We thank Alistair Savage for making us aware of the connections to the (affine) Frobenius Hecke algebras as well as for providing several suggestions on an early version of the paper.
{We thank Ben Elias for bringing to our attention connections with the Bergman diamond lemma for Hecke-type categories.}
\section{Preliminaries}\label{sec:pre}
\subsection{Wreath Products}\label{sec:wr}
Let $\Sigma_d$ be the symmetric group on the set $[d] := \{1, 2, \ldots, d\}$ generated by $S = \{s_1, \ldots, s_{d-1}\}$ consisting of simple transpositions $s_i = (i, i+1)$.
Define 
\eq\label{def:stos}
s_{a\to b}
=
\begin{cases}
s_a s_{a-1} \cdots s_{b+1} s_b &\tif a> b;
\\
{s_a }&{\tif a=b;}
\\
s_a s_{a+1} \cdots s_{b-1} s_b &\tif a < b,
\end{cases}
\quad
{
s_{a\to b \to a}
=
\begin{cases}
s_a s_{a-1} \cdots s_{b+1} s_b s_{b+1} \cdots s_a&\tif a > b;
\\
{s_a }&{\tif a=b;}
\\
s_a s_{a+1} \cdots s_{b-1} s_b s_{b-1} \cdots s_a&\tif a < b.
\end{cases}}
\endeq
Recall that for a group $G$, the wreath product $G \wr \Sigma_d$ is the semi-direct product $G^d \rtimes \Sigma_d$ whose multiplication rule is determined by 
\eq
(g_1, \ldots, g_d)w = w(g_{w(1)}, \ldots, g_{w(d)})
\quad
\textup{for all}
\quad
w \in \Sigma_d, \ g_i \in G.
\endeq
{When $G$ is finite, say $G \subseteq \Sigma_m$ for some $m$, then the direct product $G^d \subseteq \Sigma_m^d \subseteq \Sigma_{md}$. 
Thus, $G \wr \Sigma_d$ can be regarded as a subgroup of $\Sigma_{md}$
by identifying each $w \in \Sigma_d$ with the following permutation:
\eq\label{def:wrembed}
(i-1)m + j \mapsto (w(i)-1)m+j
\quad
\textup{for all} 
\quad
1\leq i \leq d, 1\leq j \leq m.
\endeq}

%

\exa[Generalized Symmetric Groups]
Let $C_m$ be the cyclic group of order $m$. The generalized symmetric group  $C_m \wr \Sigma_d$ identifies with the complex reflection group of type $G(m,1,d)$.
For a fixed generator $x \in C_m$, $C_m \wr \Sigma_d$ is generated by $x$ and $S$ subject to the relations in $\Sigma_d$, together with that $x^m = 1$ and $s_{1} x s_{1} x = x s_{1} x s_{1}$.

In particular,  the hyperoctahedral group $C_2 \wr \Sigma_d$ can be identified either with the signed permutation group on $[\pm d] := [d] \sqcup -[d]$, or with the type B Weyl group $W(\B_d)$ generated by $S \cup \{s_0\}$ with extra braid relations $(s_0 s_1)^2 = (s_1s_0)^2$ and $s_0 s_i = s_i s_0$ for $i>1$. 
\endexa

The complex reflection group $G(m,1,d)$ affords two non-isomorphic deformations -- the Ariki-Koike algebras (or cyclotomic Hecke algebras, see Section~\ref{sec:AK}), and the Yokonuma-Hecke algebras. 
\exa[$\Sigma_m \wr \Sigma_2$ and Weyl Groups of Type D]
The wreath product $\Sigma_m \wr \Sigma_2$ is now identified with the subgroup of $\Sigma_{2m}$ generated by the Young subgroup $\Sigma_m^2 := \<s_{(i-1)m+1}, \ldots, s_{im-1}~|~1\leq i\leq 2\> \subseteq \Sigma_{2m}$, as well as the element $t_1=w_{m,m} \in \Sigma_{2m}$ {(which corresponds to the generator of $\Sigma_2$ via \eqref{def:wrembed})}, where $w_{a,b}\in \Sigma_{a+b}$ is now the element given by,  in two-line notation,
\eq\label{def:wab}
w_{a,b} = \left(
\begin{array}{cccccc}
1 &\cdots& a &a+1& \cdots& a+b
\\
b+1& \cdots& b+a &1& \cdots &b
\end{array}
\right). 
\endeq 
It is well-known that $w_{a,b} = s_{a\to1} s_{a+1 \to2} \dots s_{a+b-1 \to b}$ is a reduced expression, and that
\eq
w_{a,b} s_i= \begin{cases}
s_{i+a} w_{a,b} &\tif i<b; \\
s_{i-b} w_{a,b} &\tif i>b .
\end{cases}
\endeq
\endexa
Consequently, we can treat both $\Sigma_m \wr \Sigma_2$ and $\Sigma_{2m}$ as subgroups of a larger wreath product, i.e., the signed permutation group on $[\pm 2m]$, and then obtain the following tower of groups:
\eq\label{eq:towerG}
\Sigma_m \times \Sigma_m \subseteq \Sigma_m \wr \Sigma_2 
\subseteq \Sigma_{2m} \subseteq W(\B_{2m}) = \Sigma_2 \wr \Sigma_{2m}.
\endeq
Finally, we denote by $W(\D_r)$ the subgroup of $W(\B_r)$ generated by $s_1, \ldots, s_{r-1}$ and $s_u := s_0s_1s_0$. Indeed, $W(\D_r)$ is a Weyl group of type $\D_r$ with the following Dynkin diagram:
 \vspace{-10pt}
\begin{figure}[H]
\label{figure:Dyn}
\centering
\begin{tikzpicture}
\matrix [column sep={0.6cm}, row sep={0.3 cm,between origins}, nodes={draw = none,  inner sep = 2pt}]
{
	&\node(U2)[draw, circle, fill=white, scale=0.6, label =$u$] {};
\\
&&&&
\\
	\node(L1) [draw, circle, fill=white, scale=0.6, label = below:1] {};
	&\node(L2)[draw, circle, fill=white, scale=0.6, label =below:2] {};
	&\node(L3) {$\cdots$};
	&\node(L4)[draw, circle, fill=white, scale=0.6, label =below:$r-1$] {};
\\
};
\begin{scope}
\draw (L2) -- node  {} (U2);
\draw (L1) -- node  {} (L2);
\draw (L2) -- node  {} (L3);
\draw (L3) -- node  {} (L4);
\end{scope}
\end{tikzpicture}
\end{figure}
 \vspace{-15pt}
\subsection{Affine Symmetric Groups}\label{sec:wr2}
Within this section, we allow $G$ to be an infinite monoid in a wreath product $G \wr \Sigma_d$.
Now we consider the weight lattice $P = \sum_{i=1}^d \ZZ \epsilon_i$ for $\GL_d$. The lattice $P$ contains the root lattice $Q = \sum_{i=1}^{d-1} \ZZ \alpha_i$, where $\alpha_i = \epsilon_{i} - \epsilon_{i+1}$. 
We further let $P^+ := \sum_{i=1}^d \NN \epsilon_i$.
Denote the corresponding translation monoids by $t^L =\{ t^\lambda ~|~ \lambda \in L\}$ for $L\in \{P, P^+, Q\}$, with $t^\lambda t^\mu = t^{\lambda+\mu}$ for all $\lambda, \mu \in L$.

The extended and affine symmetric groups are the semidirect products 
$\Sigma^\aff_d := t^Q \rtimes \Sigma_d$ and
$\Sigma^\ext_d := t^P \rtimes \Sigma_d$, respectively, whose multiplications are uniquely determined by
$
t^\lambda s_i = s_i t^{s_i(\lambda)}.
$
Next, under the identification $P \equiv \ZZ^d, \lambda = \sum \lambda_i \epsilon_i \mapsto (\lambda_1, \dots, \lambda_d)$, we see that the simple reflection $s_i$ on $P$ agrees with the place permutation $s_i$ on $\ZZ^d$:
\eq
\begin{split}
s_i(\lambda) &= \lambda - \<\lambda, \alpha_i\>\alpha_i
= \lambda - (\lambda_i - \lambda_{i+1})(\epsilon_i - \epsilon_{i+1})
\\
&= (\lambda_1, \dots, \lambda_{i-1},\lambda_{i+1}, \lambda_i,\lambda_{i+2}, \dots, \lambda_d).
\end{split}
\endeq
Therefore, $\Sigma^\ext_d$ can be realized as the wreath product $\ZZ \wr \Sigma_d$ whose multiplication is determined uniquely by
$
(\lambda_1, \dots, \lambda_d)s_i = s_i(\lambda_1, \dots, \lambda_{i-1},\lambda_{i+1}, \lambda_i,\lambda_{i+2}, \dots, \lambda_d)$;
while $\Sigma^\aff_d$ is a subgroup given by
\eq
\Sigma^\aff_d = \{(\lambda_1, \dots, \lambda_d)w \in \ZZ\wr\Sigma_d ~|~ \Sigma_i \lambda_i = 0\} \subsetneq \ZZ \wr \Sigma_d.
\endeq
Note that we can thus regard the extended affine Hecke algebra (or degenerate affine Hecke algebra, resp.) of type A as a quantization of the wreath product $\ZZ \wr \Sigma_d = \Sigma^\ext_d$ (or the submonoid $\NN \wr \Sigma_d \subseteq \ZZ \wr \Sigma_d$, resp.) as in Table \ref{tab:main}.
\rmk
$\Sigma^\aff_d$ is a Coxeter group with generators $s_0, \dots, s_d$ and the following Dynkin diagram:
 \vspace{-15pt}
\begin{figure}[H]
\label{figure:DynAffA}
\centering
\begin{tikzpicture}
\matrix [column sep={0.6cm}, row sep={0.3 cm,between origins}, nodes={draw = none,  inner sep = 2pt}]
{
	&&\node(U3)[draw, circle, fill=white, scale=0.6, label =$0$] {};
\\
&&&&
\\
	\node(L1) [draw, circle, fill=white, scale=0.6, label = below:1] {};
	&\node(L2)[draw, circle, fill=white, scale=0.6, label =below:2] {};
	&\node(L3) {$\cdots$};
	&\node(L4)[draw, circle, fill=white, scale=0.6, label =below:$d-1$] {};
\\
};
\begin{scope}
\draw (L1) -- node  {} (U3);
\draw (L1) -- node  {} (L2);
\draw (L2) -- node  {} (L3);
\draw (L3) -- node  {} (L4);
\draw (U3) -- node  {} (L4);
\end{scope}
\end{tikzpicture}
\end{figure}
 \vspace{-15pt}
 
The extra generator $s_0$ is obtained as follows:
let $\theta := \alpha_1 + \dots + \alpha_{d-1} = \epsilon_1 - \epsilon_d$ be the highest root. Then
\eq
s_0 = t^\theta s_\theta = (1, 0, \dots, 0, -1) s_\theta = s_\theta (-1, 0, \dots, 0, 1) = s_\theta t^{-\theta}.
\endeq
\endrmk
\subsection{Iwahori-Hecke Algebras}\label{sec:Hceke}
{Let $q\in K^\times$. For any Coxeter system $(W,S)$, the Hecke algebra $\cH_q(W)$ is the associative $K$-algebra generated by $T_s (s\in S)$, subject to the braid relations for $(W,S)$ as well as the quadratic relations $T_s^2  = (q-1)T_s + q$ for $s\in S$.
For the Weyl group $W(B_n)$ of type B, we will also consider its multi-parameter variant $\cH_{(Q,q)}(W(\B_{2m}))$ subject to the same braid relations, but with quadratic relations  $T_i^2 = (q-1) T_i + q$ (for $i>0$) and $T_0^2 = (Q-1)T_0 + Q$, where $Q\in K^\times$.
}

\subsection{Augmented Algebras}\label{sec:Aug}
Following \cite{GK93}, by an augmented algebra we mean a $K$-algebra $B$ equipped with a counit $\epsilon: B\to K$.
Denote by  $B^+ := \ker \epsilon \unlhd B$ the augmentation ideal of $B$.
Assume that $B \leq A$ is a normal subalgebra (i.e., $AB^+ = B^+A$). 
{Then one can form the quotient algebra (which is also augmented), \cite[5.2]{GK93}}, 
\eq
A//B := A/B^+A .
\endeq

\section{Quantum Wreath Products}\label{sec:Wreath}

Given a $K$-algebra $B$, {and an integer} $d \geq 2$, and a choice of parameter $Q=(R,S,\rho,\sigma)$, we introduce a construction called the {\em quantum wreath product} that produces an $K$-algebra  $B \wr \cH(d) = B \wr_Q \cH(d)$. Recall that a typical element in a wreath product group $G \wr \Sigma_d$ is of the form $(g_1, \dots, g_d)w$ for some $w\in \Sigma_d$, $g_i \in G$. 
We define an analogous algebra whose typical elements are linear combinations of elements the form below: 
\[
(b_1 \otimes \dots \otimes b_d)H_w
\quad
\textup{for some}
\quad
b_i \in B,
w\in \Sigma_d.
\]
\subsection{The Definition} 
For an element $Z \in B\otimes B$ and for $1\leq i \leq d-1$, set
\eq\label{def:Xi}
{Z_i} := \underset{i-1 \textup{ factors}}{\underbrace{1 \otimes \dots \otimes 1}} \otimes Z \otimes  \underset{d-i-1 \textup{ factors}}{\underbrace{1 \otimes \dots \otimes 1}} \in B^{\otimes d}.
\endeq 
Moreover, for an endomorphism $\phi \in \End_K(B\otimes B)$, set
\eq\label{def:Yi}
\phi_i(b_1 \otimes \dots \otimes b_d) := b_1 \otimes \dots \otimes b_{i-1}\otimes  \phi(b_{i} \otimes b_{i+1}) \otimes b_{i+2} \otimes \dots \otimes b_{d}.
\endeq 

\Def[Quantum Wreath Product]\label{def:Qwr}
Let $B$ be an associative $K$-algebra, and let $d\in \ZZ_{\geq 2}$.
Let $Q = (R,S,\rho,\sigma)$ be a choice of parameters with 
$R, S \in B \otimes B$, and 
$\rho, \sigma \in \End_K(B\otimes B)$ such that $\sigma$ is an automorphism.
The {\em quantum wreath product} is the associative $K$-algebra generated by the algebra $B^{\otimes d}$ and $H_1, \dots, H_{d-1}$ such that the following conditions hold, for $1\leq k \leq d-2, 1\leq i \leq d-1, |j-i|\geq 2$:
\begin{align}
&\textup{(braid relations) } \label{def:BR}
& H_k H_{k+ 1} H_k = H_{k+ 1} H_k H_{k + 1},  \quad H_i H_j = H_j H_i,
\\
&\textup{(quadratic relations) }\label{def:QR2}
&H_i^2 = {S_i}H_i + {R_i},
\\
&\label{def:WR2}\textup{(wreath relations) }
& H_ib = \sigma_i(b)H_i + \rho_i(b) \quad ( b\in B^{\otimes d}).
\end{align}
\noindent Often times, we refer this algebra  as $B \wr \cH(d)$, or $B \wr_Q \cH(d)$ whenever it is convenient.
\endDef

Here, ${R_i}$ is an element  in $B^{\otimes d}$, in contrast to those endomorphisms $\textup{R}_{i,i+1}$ in the literature acting on the $i$th and $(i+1)$th factors, in the context of the Yang-Baxter equations.

The quadratic and wreath relations determine the following local relations in $B\otimes B$, in the sense that one can recover \eqref{def:QR2}--\eqref{def:WR2} by embedding the local relation into the $i$th and $(i+1)$th positions of $B^{\otimes d}$ as well as replacing the symbol $H$ by $H_i$: 
\eq\label{eq:localrel}
H^2 = SH + R,
\quad
H (a\otimes b) = \sigma(a\otimes b) H + \rho(a \otimes b),
\quad
(a,b \in B).
\endeq


\subsection{Necessary Conditions} \label{sec:parameter}
We first describe certain conditions  on $Q$: 
%
\begin{align}
&\label{def:wr1}\sigma(1\otimes 1) = 1\otimes 1, 
\quad \rho(1 \otimes1)=0,
\\
&\label{def:wr2} \sigma(ab) = \sigma(a)\sigma(b),
\quad
\rho(ab) =  \sigma(a)\rho(b) + \rho(a)b,
\\
&\label{def:TTT}
\sigma(S)S + \rho(S) + \sigma(R) = S^2+R,
\quad
\rho(R) + \sigma(S)R = SR,
\\
&\label{def:qu1}
r_S\sigma^2+\rho\sigma+\sigma\rho = {l}_{S}\sigma,
\quad
r_R\sigma^2+\rho^2 = {l}_S\rho + {l}_R,
\end{align}
where  ${l_X}, r_X$ for $X\in B\otimes B$ are $K$-endomorphisms defined by  {left and} right multiplication in $B\otimes B$ by $X$, respectively. {The next five conditions will only be necessary when $d\geq 3$:}
\begin{align}
&\label{def:br1}\sigma_i\sigma_j\sigma_i = \sigma_j \sigma_i \sigma_j,
\quad \rho_i\sigma_j\sigma_i = \sigma_j \sigma_i \rho_j,
\\
&\label{def:br2}
\rho_i\sigma_j\rho_i = r_{S_j} \sigma_j \rho_i \sigma_j + \rho_j \rho_i \sigma_j + \sigma_j \rho_i \rho_j,
\\
&\label{def:br3}\rho_i\rho_j\rho_i + r_{R_i} \sigma_i \rho_j \sigma_i
= \rho_j\rho_i\rho_j + r_{R_j} \sigma_j \rho_i \sigma_j,
\end{align}
where $\{i,j\}= \{1,2\}$, $r_X$ for $X\in B^{\otimes 3}$ is understood as right multiplication in $B^{\otimes 3}$ by $X$. 
\begin{align}
\label{def:br4}&S_i = \sigma_j\sigma_i(S_j), \quad R_i = \sigma_j\sigma_i(R_j),
\quad
\rho_j\sigma_i(S_j) = 0 = \rho_j\sigma_i(R_j),
\\
\label{def:br5}& \sigma_j\rho_i(S_j) S_j +\rho_j\rho_i(S_j) + \sigma_j\rho_i(R_j) = 0 =  \rho_j\rho_i(R_j) + \sigma_j\rho_i(S_j) R_j,
\end{align}
where $\{i,j\}= \{1,2\}$. 
\endDef

Let $\{b_i \}_{i\in I}$ be a basis of $B$ for some index set $I$, and let $H_w := H_{i_1} \dots H_{i_N} \in B\wr \cH(d)$ for a reduced expression $w = s_{i_1} \dots s_{i_N}$.
Such an element $H_w$ is well-defined due to the braid relations \eqref{def:BR}.

\prop\label{prop:nec}
Suppose that $B\wr \cH(d)$ has {a} basis of the form $\{ (b_{\lambda_1} \otimes \dots\otimes b_{\lambda_d}) H_w ~|~ \lambda_j \in I, w\in \Sigma_d\}$.
Then, {Conditions \eqref{def:wr1}--\eqref{def:qu1} hold. 
Moreover, if $d\geq 3$, Conditions \eqref{def:br1}--\eqref{def:br5} hold.}
\endprop
\proof
The proposition follows from a direct calculation. See Appendix~\ref{sec:loopproof}.
\endproof
The conditions can be simplified when we put reasonable assumptions on $Q$. 
For example, if we assume that $\sigma$ is the flip map $a\otimes b \mapsto b\otimes a$,  
and that $\rho$ is the zero map, then Conditions \eqref{def:wr1}--\eqref{def:br5} are equivalent to the following conditions:
\eq\label{eq:simQ}
R = \sigma(R),
\quad
(\sigma(S)-S)R = 0,
\quad
{r_S = l_S \sigma}.
\endeq
\endrmk
Note that the more involved conditions such as \eqref{def:br2}--\eqref{def:br5} do not simplify much when $\rho \neq 0$ even if $\sigma$ is the flip map.
\rmk
{Equation \eqref{def:wr2}  is equivalent to the statement that $\sigma$ is an algebra automorphism, 
and that $\rho$ is a $\sigma$-derivation.  Then the wreath relation \eqref{def:WR2} appears in the definition of an Ore extension $B^{\otimes 2}[H_1; \sigma, \rho]$ as the new multiplication rule for the polynomial ring $B^{\otimes 2}[H_1]$. In other words, $B \wr \cH(2)$ is the quotient of this skew polynomial ring $B^{\otimes 2}[H_1; \sigma, \rho]$ by the quadratic relation $H_1^2 = SH_1 + R$.}
\endrmk
\subsection{Structure Theory} 
Now we state the basis theorem regarding the sufficient and necessary conditions on $Q$  such that $B \wr \cH(d)$ affords desirable bases. This 
theorem will be proved in Section~\ref{sec:RepA}. 
\begin{thm}\label{thm:basis}
Let $\{b_i\}_{i\in I}$ be a basis of $B$ for some index set $I$. 
The following are equivalent:
\begin{itemize}
\item[(a)]  {Conditions \eqref{def:wr1} -- \eqref{def:qu1} hold, and \eqref{def:br1} -- \eqref{def:br5} hold additionally if $d\geq 3$,}
\item[(b)] $\{ (b_{\lambda_1} \otimes \dots\otimes b_{\lambda_d}) H_w ~|~ \lambda_j \in I, w\in \Sigma_d\}$ forms a basis of $B \wr \cH(d)$,
\item[(c)]  $\{H_w (b_{\lambda_1} \otimes \dots\otimes b_{\lambda_d})  ~|~ \lambda_j \in I, w\in \Sigma_d\}$ forms a basis of $B \wr \cH(d)$.
\end{itemize}
\end{thm}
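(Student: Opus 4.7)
The plan is to concentrate on $(a)\Rightarrow(b)$: the reverse implication $(b)\Rightarrow(a)$ is exactly Proposition~\ref{prop:nec}, $(c)\Rightarrow(a)$ follows by the mirror version of that proposition, and $(a)\Rightarrow(c)$ is proved by a left/right symmetric argument to $(a)\Rightarrow(b)$. Granting (a), the spanning half of (b) is essentially bookkeeping: the wreath relation~\eqref{def:WR2} pushes every $H_i$ to the right of the $B^{\otimes d}$-part, and the braid relations~\eqref{def:BR} together with the quadratic relations~\eqref{def:QR2} reduce every $H$-word to a $K$-linear combination of $H_w$ for $w\in\Sigma_d$. Consequently the candidate normal forms $(b_{\lambda_1}\otimes\cdots\otimes b_{\lambda_d})H_w$ span $B\wr\cH(d)$, and only their linear independence needs to be established.

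For linear independence, the strategy is Humphreys-style: construct an explicit left $A$-module structure, with $A:=B\wr\cH(d)$, on the free $K$-module $M$ with basis symbols $\mathbf{m}_{\boldsymbol{\lambda},w}$ indexed by $(\boldsymbol{\lambda},w)\in I^d\times\Sigma_d$. The element $b\in B^{\otimes d}$ acts by left multiplication in the $B^{\otimes d}$-slot. The generator $H_i$ acts recursively: first one uses $H_ib=\sigma_i(b)H_i+\rho_i(b)$ to move $H_i$ past the $B^{\otimes d}$-factor, then one prescribes $H_i\cdot H_w:=H_{s_iw}$ when $\ell(s_iw)>\ell(w)$ and $H_i\cdot H_w:=S_iH_w+R_iH_{s_iw}$ (further expanded by the wreath action and induction on length) when $\ell(s_iw)<\ell(w)$. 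Once $M$ is shown to be a genuine $A$-module, the cyclic map $A\to M$, $a\mapsto a\cdot\mathbf{m}_{\boldsymbol{\lambda}_0,e}$ for a fixed anchor $\mathbf{m}_{\boldsymbol{\lambda}_0,e}$ sends the candidate normal forms to distinct basis vectors of $M$ (after twisting by an appropriate right multiplication), forcing their $K$-linear independence inside $A$.

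The hard step is checking that this prescription is compatible with the defining relations~\eqref{def:BR}--\eqref{def:WR2}. Compatibility of the $H_i$-action with the wreath relation and with the quadratic relation, when expanded on a pair of basis elements, collapses precisely to the local identities~\eqref{def:wr1}--\eqref{def:qu1}; compatibility with the braid relation $H_iH_{i+1}H_i=H_{i+1}H_iH_{i+1}$, required only for $d\geq 3$, collapses to~\eqref{def:br1}--\eqref{def:br5}. The obstacle is that these three families cannot be verified independently, because the very definition of $H_i\cdot\mathbf{m}_{\boldsymbol{\lambda},w}$ already invokes all of them recursively. The resolution is to run a single simultaneous induction on $\ell(w)$ and on the complexity of the $H$-word being straightened, in the spirit of Kashiwara's grand loop induction; this is precisely the role of Lemma~\ref{lem:loop}. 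The combinatorial bookkeeping that checks each relation against the base cases and inductive cases is then carried out in Appendix~\ref{sec:loopproof}.
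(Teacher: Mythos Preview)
Your plan for $(a)\Rightarrow(b)$ matches the paper almost exactly. The paper builds a \emph{right} $A$-module structure on $V:=B^{\otimes d}\otimes K\Sigma_d$ (you use a left module, which is the mirror convention), defines operators $f_a,T_i$ by the same length recursion you sketch, and then invokes the grand-loop Lemma~\ref{lem:loop} with the case analysis of Appendix~\ref{sec:loopproof} to show the defining relations hold on $V$. The map $V\to A$, $b\otimes w\mapsto bH_w$, is then checked to be bijective just as you describe: surjectivity by straightening, injectivity by evaluating the $A$-action at $1^{\otimes d}\otimes 1$.

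Where you diverge is $(b)\Leftrightarrow(c)$. You propose re-running the entire machine in mirror form: a mirror of Proposition~\ref{prop:nec} for $(c)\Rightarrow(a)$, and a mirror module construction for $(a)\Rightarrow(c)$. This is not free. Rewriting the wreath relation as $bH_i=H_i\sigma_i^{-1}(b)-\rho_i(\sigma_i^{-1}(b))$ shows the mirrored parameters are $\sigma'=\sigma^{-1}$, $\rho'=-\rho\sigma^{-1}$, $S'=\sigma^{-1}(S)$, $R'=R-\rho(\sigma^{-1}(S))$, and before your mirror argument can run you would have to verify that conditions \eqref{def:wr1}--\eqref{def:br5} for $(R,S,\rho,\sigma)$ are equivalent to the same list for the primed data. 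The paper sidesteps this entirely with a short triangularity argument: iterating the wreath relation gives
\[
H_w\, b \;\in\; \sigma_{\underline w}(b)\,H_w \;+\; \sum_{x<w} B^{\otimes d}H_x
\]
in the Bruhat order, and since every $\sigma_{\underline w}$ is an automorphism of $B^{\otimes d}$ the transition between the two candidate families is unitriangular with invertible diagonal. Thus $(b)\Leftrightarrow(c)$ falls out in two lines once either is established, and the logical flow is simply $(b)\Rightarrow(a)$ via Proposition~\ref{prop:nec}, $(a)\Rightarrow(b)$ via the module construction, and $(b)\Leftrightarrow(c)$ via triangularity---no mirror needed.
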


\subsection{} 
Consider a quantum wreath product $B \wr \cH(d)$.
Assume additionally that $B$ is an augmented algebra with counit $\epsilon: B \to K$,
{$R$ is invertible},
and that there is an $h_i \in K$ such that $h_i^2 = \epsilon(S) h_i + \epsilon(R)$ for each $i$.
{The space $B^{\otimes d}$ is also an augmented algebra, whose counit is also denoted by $\epsilon$.
It is natural to find the conditions on $\sigma$ and $\rho$ such that
}
 $B\wr \cH(d)$ is an augmented algebra whose counit is given by
\eq
\epsilon : B\wr \cH(d) \to K,
\quad
b_1 \otimes \dots  \otimes b_d \mapsto \epsilon(b_1) \dots \epsilon(b_d),
\quad
H_i \mapsto h_i. 
\endeq
{The counit is well-defined if and only if $\epsilon(H_i H_{i+1} H_i) = \epsilon (H_i H_{i+1} H_i)$ and $\epsilon(H_i b) = \epsilon (\sigma_i(b) H_{i} + \rho_i(b))$ for all $i$ and $b\in B^{\otimes d}$. 
Equivalently, $h_i$'s are all the same since $R$ (and hence $h_i$'s) are invertible. 
We can write $h := h_i$. Moreover,
\eq\label{eq:augwreath}
h(\epsilon(b) - \epsilon\sigma(b)) = \epsilon\rho (b) \quad \textup{for all } b \in B\otimes B.
\endeq
Hence, $B \wr \cH(d)$ is augmented if $\epsilon\sigma = \epsilon$  and $\epsilon\rho = 0$.

Note that $\eqref{eq:augwreath}$ can hold for more general $\sigma$ and $\rho$. For examples, those satisfying $\sigma(b_1 \otimes b_2) = k(b_2\otimes b_1)$ for some unit $k\in K^{\times}$, and
$h(1-k) \epsilon = \epsilon \rho \in \Hom(B\otimes B, K)$.
}
Under the assumption \eqref{eq:augwreath}, we can provide more information about the quantum wreath products and their quotients.
\begin{prop}\label{prop:struc}
Let $A = B \wr \cH(d)$ be a quantum wreath product with two bases as in Theorem~\ref{thm:basis}. 
\begin{itemize}
\item[(a)]
Assume that {both $A$ and $B$ are augmented algebras with counits both denoted by $\epsilon$}.
If $\sigma(b_1 \otimes b_2) \in Kb_2\otimes b_1$, 
then
$B^{\otimes d}$ is a normal subalgebra of $A$.	

As a consequence,
	$A // B^{\otimes d}$ is isomorphic to the {associative $K$-algebra generated by $T_1, \dots, T_{d-1}$ subject to the usual braid relations of type A together with modified quadratic relations $T_i^2 = \epsilon(S)T_i + \epsilon(R)$. Such an algebra is denoted by $\cH_{\epsilon}(\Sigma_d)$ since it recovers the Hecke algebra $\cH_q(\Sigma_d)$ when $\epsilon(S) = (q-1), \epsilon(R) =q$ for some $q\in K^\times$.}
\item[(b)]
Assume that $B$ is a symmetric algebra with trace map $\Tr: B\to K$ that induces to $\Tr: B^{\otimes d}\to K$, $b_1\otimes \cdots \otimes b_d \mapsto \prod_i\Tr(b_i)$.
If $\Tr (\sigma(b)) = \Tr(b)$ for  $b \in B\otimes B$, $R$ is invertible, and $\rho = 0$, then $A$ is symmetric.
\end{itemize}
\end{prop}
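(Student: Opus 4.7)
For part (a), my plan is to establish the normality $A \cdot (B^{\otimes d})^+ = (B^{\otimes d})^+ \cdot A$ by induction on $\ell(w)$, then identify the quotient via its presentation. Since Theorem~\ref{thm:basis} provides both a left basis $\{(b_\lambda) H_w\}$ and a right basis $\{H_w (b_\lambda)\}$ of $A$, it suffices to check $H_i \cdot (B^{\otimes d})^+ \subseteq (B^{\otimes d})^+ A$, with the reverse inclusion following symmetrically from the other basis. For $b \in (B^{\otimes d})^+$, the wreath relation yields $H_i b = \sigma_i(b) H_i + \rho_i(b)$, so I must verify that both $\sigma_i(b)$ and $\rho_i(b)$ lie in $(B^{\otimes d})^+$. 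The hypothesis $\sigma(b_1 \otimes b_2) \in K(b_2 \otimes b_1)$ means $\sigma$ acts as a scaled flip on pure tensors; combined with the augmentation identity \eqref{eq:augwreath}, this produces the desired vanishing of $\epsilon\circ\sigma_i$ and $\epsilon\circ\rho_i$ on $(B^{\otimes d})^+$. An induction on $\ell(w)$ then extends the containment to $H_w \cdot (B^{\otimes d})^+$.

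To identify the quotient $A // B^{\otimes d} = A/(B^{\otimes d})^+ A$, I observe that each basis vector reduces as $(b_\lambda) H_w \equiv \epsilon(b_\lambda)\,\bar H_w$, so the quotient is $K$-spanned by $\{\bar H_w\}_{w\in\Sigma_d}$, with linear independence inherited from the basis theorem. The braid relations descend unchanged, and the quadratic relation $H_i^2 = S_i H_i + R_i$ descends to $\bar H_i^2 = \epsilon(S)\bar H_i + \epsilon(R)$ because $S_i - \epsilon(S)\cdot 1$ and $R_i - \epsilon(R)\cdot 1$ lie in $(B^{\otimes d})^+$, which is killed in the quotient by the normality just proved. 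This matches the defining presentation of $\cH_\epsilon(\Sigma_d)$, giving a surjection $\cH_\epsilon(\Sigma_d) \twoheadrightarrow A // B^{\otimes d}$ which is an isomorphism by rank comparison (both are $K$-free of rank $|\Sigma_d|$).

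For part (b), I would define $\tau \colon A \to K$ by $\tau((b_\lambda) H_w) = \delta_{w,e}\Tr(b_\lambda)$ (well-defined by the basis theorem) and then verify the two symmetric-algebra axioms. Because $\rho = 0$, the wreath relation simplifies to $H_i b = \sigma_i(b) H_i$, hence $H_w b = \tilde\sigma_w(b) H_w$ for a well-defined twist $\tilde\sigma_w$ of $B^{\otimes d}$. The trace property $\tau(xy) = \tau(yx)$ reduces on generators to two cases: for $b, b' \in B^{\otimes d}$ it follows from $\Tr$ being a trace on $B^{\otimes d}$ together with $\Tr\sigma = \Tr$; and for an $H_j$ crossing a basis vector $(b_\lambda)H_w$, one expands both $H_j H_w$ and $H_w H_j$ via the quadratic relation, sorts by length, and matches the $H_e$-coefficients, where once again $\Tr\sigma = \Tr$ is the key input. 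For non-degeneracy I construct an explicit dual basis: invertibility of $R$ gives $H_i^{-1} = R_i^{-1}(H_i - S_i)$, so each $H_w$ is invertible. The pairing $\tau((b_\lambda) H_w \cdot H_{w'^{-1}}(c_\mu))$ vanishes unless $w = w'$ (since only the $H_e$-component of $H_w H_{w^{-1}}$ contributes, and $H_w H_{w'^{-1}}$ has no such component when $w \neq w'$), and when $w = w'$ it equals $\Tr(b_\lambda \alpha_w c_\mu)$ for an invertible $\alpha_w \in B^{\otimes d}$ built iteratively from the $R_i$'s. Choosing $\{c_\mu\}$ dual to $\{b_\lambda \alpha_w\}$ under $\Tr$ produces the desired dual basis.

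The step I expect to require the most care is the trace identity in (b): when an $H_j$ crosses a basis vector, one must meticulously expand products of Hecke generators via the quadratic relation, track the $\sigma$-twists created as $b_\lambda$ is moved past them, and verify that $\Tr\sigma = \Tr$ makes the resulting $H_e$-coefficients coincide on both sides. Part (a), by contrast, is largely a bookkeeping exercise once the augmentation identity \eqref{eq:augwreath} and the flip-like hypothesis on $\sigma$ are properly combined, and the quotient identification then reduces to a direct comparison of presentations.
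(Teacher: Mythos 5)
Your proposal is correct and follows essentially the same route as the paper: part (a) via the wreath relation plus the observation that the scaled-flip hypothesis and \eqref{eq:augwreath} force $\epsilon\sigma_i$ and $\epsilon\rho_i$ to kill $(B^{\otimes d})^+$, and part (b) via the same trace form $\Tr(bH_w)=\delta_{w,1}\Tr(b)$, reducing symmetry to matching the identity coefficients of $H_xH_y$ and $H_yH_x$ using $\Tr\sigma=\Tr$ and $\sigma(R)=R$. The only differences are presentational — you spell out the quotient identification and the nondegeneracy (via invertibility of the $H_i$'s) that the paper merely asserts, and you check the trace identity generator-by-generator where the paper computes the telescoping product of $R_i$'s directly.
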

\begin{proof}
(a) Since $(B^{\otimes d})^+ \lhd B^{\otimes d}$ is an ideal, an arbitrary element in 
$A(B^{\otimes d})^+ 
= \langle H_i \rangle (B^{\otimes d})^+$ 
is of the form $\sum_{w}H_wb_w$, 
where $\epsilon(b_w) =0$ for all $w$. 
Thus, it suffices to show that for any $b \in B^{\otimes d}$ with $\epsilon(b) =0$,  
$H_ib \in (B^{\otimes d})^+A 
= (B^{\otimes d})^+ \langle H_j\rangle$ for all $i$.

Since $\sigma(b_1 \otimes b_2) \in Kb_2\otimes b_1$, $\epsilon(\sigma_i(b))$ is a multiple of $\epsilon(b)=0$. Thus,
\eq
H_ib = \sigma_i(b)H_i + \rho_i(b) \in (B^{\otimes d})^+ \langle H_j\rangle = (B^{\otimes d})^+A,
\endeq
where $\rho_i(b) \in (B^{\otimes d})^+$ since {$\epsilon(\rho(b)) \in K  \epsilon(b) =0$ for all $b\in (B^{\otimes d})^+$, thanks to \eqref{eq:augwreath}}.


(b) 
Since $B$ is symmetric, $\beta: B\times B \to K, (a,b) \mapsto \Tr(ab)$ is a non-degenerate associative symmetric bilinear form. 
We claim that such a form for $A$ is $\beta_A:A \times A \to K$, given by
\eq
\beta_A(aH_x, bH_y) = \Tr(aH_x bH_y),
\quad
\textup{where}
\quad
\Tr(bH_w) :=\begin{cases}
\Tr(b) &\tif w=1;
\\
0 &\textup{otherwise}.
\end{cases}
\endeq  
This trace map is well-defined since $A$ has the bases as in  Theorem~\ref{thm:basis}.
It is not hard to verify that $\beta_A$ is nondegenerate and associative {(for this to be true, it is crucial  that $R$ is invertible)}.
It suffices to check that $\Tr(aH_x bH_y) = \Tr(bH_y aH_x )$ for all $a, b\in B^{\otimes d}, x,y \in \Sigma_d$.
By applying the wreath relation iteratively, we obtain $H_ya = \sigma_{y}(a) H_y$, 
where $\sigma_{y}:= \sigma_{i_1} \cdots \sigma_{i_N}$ for a reduced expression $y = s_{i_1} \cdots s_{i_N} \in \Sigma_d$,
which is well-defined as a consequence of \eqref{def:br1}.
Hence, for all $a, b \in B^{\otimes d}$, 
\eq
\Tr(bH_yaH_x) =\Tr(b\sigma_y(a) H_yH_x),
\quad
\Tr(aH_x b H_y) =\Tr(a\sigma_x(b) H_xH_y).
\endeq
Both traces are zero unless $x=y\inv$.
In the case $x = y\inv$, 
assume that $x = s_{i_1} \cdots s_{i_N}$ is a reduced expression. 
If we write $H_y H_x = \sum_{z\in \Sigma_d} c_z H_z$, then
\eq
c_1 = (\sigma_{i_{N}}\cdots \sigma_{i_{2}})(R_{i_{1}})(\sigma_{i_{N}}\cdots \sigma_{i_{3}})(R_{i_{2}})\cdots \sigma_{i_{N}}(R_{i_{N-1}})R_{i_N}.
\endeq
On the other hand, if we write $H_x H_y = \sum_{z\in \Sigma_d} c'_z H_z$, then
\eq
c'_1 = (\sigma_{i_{1}}\cdots \sigma_{i_{N-1}})(R_{i_{N}})\cdots \sigma_{i_{1}}(R_{i_{2}})R_{i_1}.
\endeq
Note that when $\rho = 0$ and $R$ is invertible, \eqref{def:TTT} become $\sigma(S) = S$ and $\sigma(R) = R$.
Since $\sigma_x =  \sigma_{i_1} \cdots \sigma_{i_N}$ preserves the trace, 
\eq
\begin{split}
\Tr(b\sigma_y(a)H_x H_y) 
&= 
\Tr(b\sigma_y(a) (\sigma_{i_{N}}\cdots \sigma_{i_{2}})(R_{i_{1}})\cdots \sigma_{i_{N}}(R_{i_{N-1}})R_{i_N})  
\\
&= \Tr(\sigma_x(b)a \sigma_{i_1}(R_{i_1}) (\sigma_{i_{1}}\sigma_{i_{2}})(R_{i_{2}})\cdots (\sigma_{i_{1}}\cdots \sigma_{i_{N}})(R_{i_{N}}))
\\
&= \Tr(\sigma_x(b)a R_{i_1} \sigma_{i_{1}}(R_{i_{2}})\cdots (\sigma_{i_{1}}\cdots \sigma_{i_{N-1}})(R_{i_{N}}))
=  \Tr(a\sigma_x(b) H_x H_y), 
\end{split}
\endeq
and so the two traces agree, i.e., $A$ is a symmetric algebra.
\end{proof}
\section{Quantum Wreath Product Algebras} \label{sec:examples}
{In the following, a list of algebras is provided which are special cases of our quantum wreath products. 
For each case, the reader is referred to Appendix~\ref{sec:app} for a presentation of the algebra $A$ in question by generators and relations.
We choose suitably our parameters to realize these algebras as a quantum wreath product $B \wr \cH(d)$.
In order to show that $A \simeq B \wr \cH(d)$, one can either show that the relations for $A$ are equivalent to those for $B\wr \cH(d)$, under a certain identification;
or use a dimension argument when $A$ is finite dimensional.}
\subsection{Hu Algebras}\label{ex:HHS}
The reader is referred to Section~\ref{sec:Hu} for the definition of the Hu algebra $ \cA(m)$. The algebra $\cA(m)$ is essentially different from all other examples in Section~\ref{sec:examples}.
It is not covered by the theory of Rosso-Savage nor of Kleshchev-Muth due to its intriguing quadratic relation.

Assume that $K$ is a field of characteristic not equal to 2, $f_{2m}(q)\neq 0$,  $B=\cH_q(\Sigma_m)$, and the parameter $Q = (R,S,\rho,\sigma)$ is given by 
\eq
R = z_{m,m},
\quad
S = 0, 
\quad 
\sigma: a\otimes b \mapsto b\otimes a,
\quad
\rho = 0.
\endeq
For this choice, almost all conditions \eqref{def:wr1}--\eqref{def:br5}  follow immediately (see \eqref{eq:simQ}).
The only non-trivial condition, $\sigma(R) = R$, follows the construction of $z_{m,m}$ in Section~\ref{sec:Am}.

The algebra $\cH_q(\Sigma_m) \wr \cH(2)$ is generated by 
$T_x \otimes T_y\  (x, y \in \Sigma_m)$ and $H_1$.
Note that there are no braid relations for a single generator $H_1$. The quadratic relation translates to that $H_1^2 = z_{m,m}$, and the wreath relation becomes
\eq
H_1(T_x\otimes T_y) = (T_y \otimes T_x)H_1 \quad (x, y \in \Sigma_m).
\endeq
A dimension argument (by Proposition~\ref{prop:Hu}) shows that the Hu algebra $ \cA(m)$ is canonically isomorphic to $\cH_q(\Sigma_m) \wr \cH(2)$ via the assignment below:
\eq
T_i \mapsto \begin{cases}
T_i \otimes 1 &\tif 1\leq i \leq m-1;
\\
1\otimes T_{i-m} &\tif m+1 \leq i \leq 2m-1,
\end{cases}
\quad
H_1(m) \mapsto H_1.
\endeq

\subsection{(Degenerate) Affine Hecke Algebras}\label{sec:AHAfamily}
See Appendix~\ref{sec:AHA} for definitions of variants of affine Hecke algebras.
Now we consider the case when the base algebra $B$ is generated by $X$ {and that $\sigma: a\otimes b \mapsto b\otimes a$ is the flip map}.
Verifying relations for all these variants can be made easier via the Demazure operator  $\partial$, given by
\eq\label{eq:partial}
\partial(X^a \otimes X^b) = \frac{X^a\otimes X^b - X^b \otimes X^a}{X\otimes 1 - 1\otimes X}.
\endeq
In particular, $\partial(X\otimes 1) = 1 = - \partial(1\otimes X)$.
\begin{prop}\label{prop:De}
{Let $X \in B$ be an element not in the base ring $K$}. Let $\sigma  \in \End(B\otimes B)$ be the flip map, and let $\rho \in \End(B\otimes B)$ be such that \eqref{def:wr2} holds.
If  $\beta := \rho(X\otimes 1)$ commutes with both $X\otimes 1$ and $1\otimes X$,
then the following holds for all $i,j \geq 0$: 
\eq\label{eq:Derho}
\rho(X^i \otimes X^j) = \partial(X^i\otimes X^j)\beta.
\endeq
Moreover, if $X\inv \in B$, then \eqref{eq:Derho} also holds for $i,j \in \ZZ$.
\end{prop}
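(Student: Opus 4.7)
My approach is to first pin down $\rho(1 \otimes X)$ using the commutativity $(X \otimes 1)(1 \otimes X) = (1 \otimes X)(X \otimes 1)$ in $B \otimes B$, and then to propagate the formula by induction on $i + j$. For the first step, I apply $\rho$ to both sides of the commutation via the $\sigma$-derivation identity \eqref{def:wr2}:
\begin{align*}
\rho\bigl((X \otimes 1)(1 \otimes X)\bigr) &= (1 \otimes X)\rho(1 \otimes X) + \beta(1 \otimes X),\\
\rho\bigl((1 \otimes X)(X \otimes 1)\bigr) &= (X \otimes 1)\beta + \rho(1 \otimes X)(X \otimes 1).
\end{align*}
Equating the two and pulling through the hypothesis that $\beta$ commutes with $X \otimes 1$ and $1 \otimes X$ rearranges to
\[
(1 \otimes X - X \otimes 1)\bigl(\rho(1 \otimes X) + \beta\bigr) = 0
\]
inside the commutative subalgebra of $B \otimes B$ generated by $X \otimes 1$ and $1 \otimes X$. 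Since $X \otimes 1 - 1 \otimes X$ acts as a nonzerodivisor on this subalgebra in the polynomial and Laurent polynomial settings of interest, this forces $\rho(1 \otimes X) = -\beta$, matching $\partial(1 \otimes X)\beta = -\beta$.

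Having determined $\rho$ on the two generators, the induction is straightforward. Factor $X^{i+1} \otimes X^j = (X \otimes 1)(X^i \otimes X^j)$ and apply \eqref{def:wr2} together with the inductive hypothesis:
\[
\rho(X^{i+1} \otimes X^j) = (1 \otimes X)\,\partial(X^i \otimes X^j)\,\beta + \beta(X^i \otimes X^j).
\]
Commuting $\beta$ past the polynomial factors, the claim reduces to the algebraic identity
\[
(1 \otimes X)\partial(X^i \otimes X^j) + (X^i \otimes X^j) = \partial(X^{i+1} \otimes X^j),
\]
which is an immediate verification from \eqref{eq:partial} after writing both sides over the common denominator $X \otimes 1 - 1 \otimes X$. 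The symmetric case $X^i \otimes X^{j+1} = (X^i \otimes X^j)(1 \otimes X)$ is handled analogously using $\rho(1 \otimes X) = -\beta$ together with the twin identity
$(1 \otimes X)\partial(X^i \otimes X^j) - (X^j \otimes X^i) = \partial(X^i \otimes X^{j+1})$.

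For the Laurent case, apply \eqref{def:wr2} to $(X \otimes 1)(X^{-1} \otimes 1) = 1 \otimes 1$ and to $(1 \otimes X)(1 \otimes X^{-1}) = 1 \otimes 1$, using $\rho(1 \otimes 1) = 0$ from \eqref{def:wr1}, to solve explicitly for $\rho(X^{-1} \otimes 1) = -(X^{-1} \otimes X^{-1})\beta$ and the analogous $\rho(1 \otimes X^{-1})$; a short computation confirms these match $\partial(X^{-1} \otimes 1)\beta$ and $\partial(1 \otimes X^{-1})\beta$ via $\partial(X^{-1} \otimes 1) = -(X^{-1} \otimes X^{-1})$. The preceding induction then extends to all $i,j \in \ZZ$ by induction on $|i| + |j|$, using any of the four factorizations $X^{i\pm 1} \otimes X^j$ or $X^i \otimes X^{j \pm 1}$ to step. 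The one delicate point is step one — the nonzerodivisor property of $X \otimes 1 - 1 \otimes X$ — which is automatic in the polynomial and Laurent polynomial settings where the proposition is applied; everything else reduces to routine identities for the Demazure operator $\partial$.
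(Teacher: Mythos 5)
Your proposal is correct and follows essentially the same route as the paper's proof: both extract $\rho(1\otimes X)=-\beta$ by applying the $\sigma$-derivation rule \eqref{def:wr2} to the two factorizations of $X\otimes X$ (the paper) respectively of the commutator relation (you), then induct using $\rho((X\otimes 1)(X^i\otimes X^j))$-type factorizations and the telescoping identities for $\partial$, and handle the Laurent case by applying \eqref{def:wr2} to products equal to $1\otimes 1$ or to cross-products of $X^{\pm 1}\otimes 1$ and $1\otimes X^{\mp1}$. Your explicit flagging of the cancellation needed to isolate $\rho(1\otimes X)$ is, if anything, more careful than the paper, which performs the same step silently.
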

\proof
First, it follows from \eqref{def:wr2} that
\eq\label{eq:X11X}
\rho(X \otimes X) = \rho((X\otimes 1)(1\otimes X)) = (1\otimes X) (\rho(1\otimes X)+ \beta). 
\endeq
On the other hand, $\rho(X \otimes X)$ is also equal to
$\rho((1\otimes X)(X\otimes 1)) = (X\otimes 1) (\beta + \rho(1\otimes X))$, and hence
 $\rho(1 \otimes X) = -\beta$. An inductive argument shows that, for all $i \geq 1$,
\eq
\rho(X^i\otimes 1) = \beta \partial(X^i\otimes 1) = -\rho(1\otimes X^i),
\endeq
and hence,  
\eq
\begin{split}
\rho(X^i \otimes X^j) &= (1\otimes X^i) \rho(1\otimes X^j) + \rho(X^i \otimes 1)(1\otimes X^j) 
\\
&=\beta ((X^{i-1}\otimes X^j + \dots + 1\otimes X^{i+j-1}) - (X^{j-1}\otimes X^i + \dots + 1\otimes X^{i+j-1}))
\\
&= \beta \partial(X^i \otimes X^j).
\end{split}
\endeq
This first part is done. Assume from now on {$X\inv \in B$}. 
We apply \eqref{def:wr2} to both $\rho((X\otimes 1)(1\otimes X\inv))$ and $\rho((1\otimes X\inv)(X\otimes 1))$ and then equate the two. We then obtain \eqref{eq:Derho} for ${i=0, j<0}$. Similarly, by applying \eqref{def:wr2} to both $\rho((X\inv\otimes 1)(1\otimes X))$ and $\rho((1\otimes X)(X\inv\otimes 1))$ and then equating the two, we obtain \eqref{eq:Derho} for ${i<0, j=0}$. The most general case is then obtained by applying \eqref{def:wr2} to $\rho((X^i \otimes 1)(1\otimes X^j))$.
\endproof
In other words, for {the} verification of  {the} conditions appearing in Theorem~\ref{thm:basis} involving $\rho$, one only need to check them on the generator $X\otimes 1$ as long as  $\beta := \rho(X\otimes 1)$ commutes with both $X\otimes 1$ and $1\otimes X$.

Now, for any $\mu = \sum_{i=1}^d \mu_i \epsilon_i \in \sum_i \ZZ \epsilon_i$, write $X^\mu \equiv X^{\mu_1}\otimes \dots \otimes X^{\mu_d}$, and let $s_j$ act on $\sum_i \ZZ \epsilon_i$ by place permutation on the $j$th and $(j+1)$th positions.
{For an element $X\in B$, for $1\leq j \leq d$, set 
\eq
X^{(j)}
:= \underset{j-1 \textup{ factors}}{\underbrace{1 \otimes \dots \otimes 1}} \otimes X \otimes  \underset{d-j \textup{ factors}}{\underbrace{1 \otimes \dots \otimes 1}} \in B^{\otimes d}.
\endeq}
\begin{cor}[Bernstein-Lusztig Relations]\label{cor:BL}
Retain the assumptions in Proposition~\ref{prop:De}.
The following {local equalities in $B \wr \cH(d)$ hold in the sense of \eqref{eq:localrel}}:
\eq\label{eq:HXH}
H (X\otimes 1) H= (1\otimes X)R+ ((1\otimes X)S + \beta) H.
\endeq
\eq\label{eq:BLrho}
H (X^i\otimes X^j) = (X^j\otimes X^i)H + \partial(X^i\otimes X^j)\beta.
\endeq
{In particular, the following relations in $B \wr \cH(d)$ hold, for $1\leq i \leq d-1$, and for all $\mu$}, 
\eq\label{eq:BLrho2}
H_i X^\mu = X^{s_i(\mu)}H_i + \frac{X^\mu - X^{s_i(\mu)}}{X^{\epsilon_{i}}-X^{\epsilon_{i+1}}}\rho_i(X^{(i)}).
\endeq
\end{cor}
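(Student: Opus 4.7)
The plan is to obtain all three identities as direct consequences of the wreath relation \eqref{def:WR2}, the quadratic relation \eqref{def:QR2}, and the formula $\rho(X^i \otimes X^j) = \partial(X^i \otimes X^j)\beta$ from Proposition~\ref{prop:De}. None of the three identities should require any new structural input beyond these.

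First, I would dispose of \eqref{eq:BLrho}. Applying the local wreath relation $H(a\otimes b) = \sigma(a\otimes b)H + \rho(a\otimes b)$ with $a = X^i$, $b = X^j$, and using that $\sigma$ is the flip map, gives $\sigma(X^i \otimes X^j) = X^j\otimes X^i$, while Proposition~\ref{prop:De} supplies $\rho(X^i \otimes X^j) = \partial(X^i\otimes X^j)\beta$. Substituting yields \eqref{eq:BLrho}.

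Next, I would derive \eqref{eq:HXH} from the special case $(i,j) = (1,0)$ of \eqref{eq:BLrho}. Since $\partial(X\otimes 1) = 1$, this case reads $H(X\otimes 1) = (1\otimes X)H + \beta$. Right-multiplying by $H$ and invoking the quadratic relation $H^2 = SH + R$ gives
\[
H(X\otimes 1)H = (1\otimes X)H^2 + \beta H = (1\otimes X)(SH + R) + \beta H,
\]
which is exactly \eqref{eq:HXH} after collecting the coefficient of $H$.

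Finally, \eqref{eq:BLrho2} is simply the global version of \eqref{eq:BLrho} embedded in $B^{\otimes d}$ at positions $(i,i+1)$. Applying the wreath relation in the form $H_i X^\mu = \sigma_i(X^\mu)H_i + \rho_i(X^\mu)$, the first term becomes $X^{s_i(\mu)} H_i$ because $\sigma_i$ swaps the $i$th and $(i+1)$th tensor factors. For the second term, by definition \eqref{def:Yi} and Proposition~\ref{prop:De},
\[
\rho_i(X^\mu) = X^{\mu_1}\otimes \cdots \otimes \partial(X^{\mu_i}\otimes X^{\mu_{i+1}})\beta \otimes \cdots \otimes X^{\mu_d}.
\]
Interpreting the formal quotient $\tfrac{X^\mu - X^{s_i(\mu)}}{X^{\epsilon_i} - X^{\epsilon_{i+1}}}$ as the divided difference applied at positions $(i,i+1)$ (which is well-defined since $X^\mu$ and $X^{s_i(\mu)}$ agree outside positions $i$ and $i+1$), and noting that $\rho_i(X^{(i)})$ is precisely $\beta$ inserted at positions $(i,i+1)$, the displayed expression factors as $\tfrac{X^\mu - X^{s_i(\mu)}}{X^{\epsilon_i} - X^{\epsilon_{i+1}}}\, \rho_i(X^{(i)})$, giving \eqref{eq:BLrho2}.

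The only mild obstacle is bookkeeping between the local and global versions of $\sigma$ and $\rho$, together with verifying that the formal quotient in \eqref{eq:BLrho2} corresponds correctly to the tensor-factor divided difference; there is no essential difficulty beyond the invocation of Proposition~\ref{prop:De}.
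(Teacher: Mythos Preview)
Your proposal is correct and follows essentially the same approach as the paper, which simply states that \eqref{eq:HXH} and \eqref{eq:BLrho} follow from direct computations using the quadratic and wreath relations \eqref{def:QR2}--\eqref{def:WR2}, and that \eqref{eq:BLrho2} is a paraphrase of \eqref{eq:BLrho}. You have just filled in those direct computations explicitly.
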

\proof 
Equations \eqref{eq:HXH} and \eqref{eq:BLrho} follow from direct computations {using \eqref{def:QR2}--\eqref{def:WR2}};
while \eqref{eq:BLrho2} is a paraphrase of \eqref{eq:BLrho}.
\endproof
\subsubsection{Affine Hecke Algebras of Type A}\label{ex:AHA}
Let $B = K[X^{\pm1}]$ be the Laurent polynomials in $X$ over a field $K$, $q \in K^{\times}$, and let
\eq\label{eq:RRAff}
\begin{split}
&R = q(1\otimes1),
\quad
\sigma: {f\otimes g \mapsto g \otimes f, \quad (f,g\in B)},
\\
&S= (q-1)(1\otimes1),
\quad
\rho: X^a \otimes X^b \mapsto -(q-1)\partial(X^a \otimes X^b)(1\otimes X).
\end{split}
\endeq
The extended affine Hecke algebra $\cH^\ext_q(\Sigma_d)$ 
can be realized via the quantum wreath product
$K[X^{\pm1}] \wr \cH(d)$ under the identification, for $\lambda = (\lambda_1, \dots, \lambda_d)$:
\eq
T_i \mapsto H_i,
\quad
Y^\lambda \mapsto X^\lambda \equiv X^{\lambda_1} \otimes \dots \otimes X^{\lambda_d}.
\endeq
Since $\beta = -(q-1) 1\otimes X$ commutes with either $X\otimes 1$ or $1\otimes X$,  Corollary~\ref{cor:BL} follows.
Therefore, the wreath relations \eqref{def:WR2} implies the Bernstein-Lusztig relations \eqref{eq:BLrel}
since  
$$\frac{X^\mu - X^{s_i(\mu)}}{X^{\epsilon_{i}}-X^{\epsilon_{i+1}}}\beta 
=
(q-1) \frac{X^\mu - X^{s_i(\mu)}}{1-X^{\epsilon_i - \epsilon_{i+1}}}.$$
It is easy to see that the Bernstein-Lusztig relation implies the wreath relation, and hence, they are equivalent.

{
Since it is well-known (see \cite{Lu89}) that $\cH^\ext_q(\Sigma_d)$ admits a basis of the form $\{Y^\mu T_w ~|~ \mu \in P, w\in\Sigma_d\}$, conditions \eqref{def:wr1}--\eqref{def:br5} hold as long as Theorem~\ref{thm:basis} is proved.
On the other hand, one can give another proof of Lusztig's basis theorem by a tedious but elementary verification of conditions \eqref{def:wr1}--\eqref{def:br5}.
}

Note that by this choice of $R, S$, we have $H(X\otimes 1)H = q(1\otimes X)$ from Corollary~\ref{cor:BL}, or equivalently, $H_i X^{(i)} H_i = qX^{(i+1)}$ for all $i$.
If we choose $R = 1\otimes 1, S = (q\inv - q)1\otimes 1$ instead, we would obtain $H_i X^{(i)} H_i = X^{(i+1)}$ for all $i$.
 
The affine Hecke algebra $\cH_q(\Sigma_d^\aff)$ is then realized as the subalgebra
\eq
\cH_q(\Sigma_d^\aff) \equiv \{X^\lambda H_w ~|~ \Sigma_i \lambda_i = 0, w\in \Sigma_d\}.
\endeq
In particular, the affine generator $T_0\in \cH_q(\Sigma_d^\aff)$ can be identified as
\eq
T_0 \equiv (X \otimes 1^{\otimes d-2} \otimes X\inv) H_{d-1} \dots H_2 H_1 H_2 \dots H_{d-1}.
\endeq 
\subsubsection{Ariki-Koike Algebras} \label{ex:AK}
Let $q_1, \dots, q_m\in K$, and let $Q= (R, S, \rho, \sigma)$ be as in \eqref{eq:RRAff}.
The Ariki-Koike algebra is the following cyclotomic quotient of the quantum wreath product
\eq 
\cH_{q,q_1, \dots, q_m}(C_m\wr \Sigma_d) = \frac{K[X^{\pm1}] \wr \cH(d)}{\langle (X-q_1) \dots (X-q_m)\otimes 1^{\otimes d-1}\rangle}.
\endeq
In particular, the Jucys-Murphy elements $L_i$'s are identified as below:
\eq
L_1 = T_0 \equiv \={X \otimes 1^{\otimes d-1}} =: \={X^{(1)}},
\quad
L_{i+1} = {q\inv} T_i L_i T_i \equiv \={X^{(i+1)}} 
\quad
\textup{for}
\quad 0 \leq i \leq d-1.
\endeq
Therefore, the well-known surjection  $\cH_q^\ext(\Sigma_d) \to \cH_{q,q_1, \dots, q_m}(C_m\wr \Sigma_d), Y^{\epsilon_i} \mapsto L_i$ is the canonical map
\eq
K[X^{\pm1}] \wr \cH(d) \to  \frac{K[X^{\pm1}] \wr \cH(d)}{\langle (X-q_1) \dots (X-q_m)\otimes 1^{\otimes d-1}\rangle},
\quad
X^\mu H_w \mapsto \={X^\mu H_w} 
\quad
(\mu \in \ZZ^d, w\in \Sigma_d).
\endeq
\subsubsection{Affine Hecke Algebras of Type B/C}
\label{ex:AHABC}

The (non-extended) affine Hecke algebra $\cH(C^\aff_d)$ of type C is the Hecke algebra $\cH(C^\aff_d) = \<T_0, \dots, T_d\>$ for the Coxeter group of type $C^\aff_d$, in which $T_0$ corresponds to the type C node in the Dynkin diagram; while $T_d$ corresponds to the affine node.
{Let $B = K[X^{\pm1}, Y^{\pm1}]$, $q, \xi, \eta \in K^{\times}$, and let
\eq\label{eq:RRAffC}
\begin{split}
&R = (1\otimes1),
\quad
\sigma:  f\otimes g \mapsto g \otimes f, \quad (f,g\in B),
\\
&S= (q\inv-q)(1\otimes1),
\quad
\rho: 
\begin{cases}
X \otimes 1 \mapsto (q-q\inv)(1\otimes X);
\\
Y \otimes 1 \mapsto (q\inv-q)(Y\otimes 1).
\end{cases}
\end{split}
\endeq
}
Then 
\eq
\cH(C^\aff_d) =
\frac{K[X^{\pm1}, Y^{\pm1}] \wr \cH(d)}{\langle (X+\xi)(X-\xi\inv)\otimes 1^{\otimes d-1}, 1^{\otimes d-1} \otimes (Y+\eta)(Y-\eta\inv)\rangle},
\endeq
under the identification
\eq
T_0 \mapsto \overline{X^{(1)}},
\quad
T_i \mapsto \overline{H_i} \quad (1\leq i \leq d-1),
\quad
T_d \mapsto \overline{Y^{(d)}}.
\endeq
On the other hand, the extended affine Hecke algebra  $\cH^\ext(B_d)$ of type B is generated by
$T_0$, $T_1$, $\dots$, $T_{d-1}$, $Y^\lambda (\lambda \in P(B^\vee_d))$ where $T_0$ corresponds to the type B node in the Dynkin diagram. 
It is possible to identify $\cH^\ext(B_d)$ as a subalgebra of $K[X^{\pm1}] \wr \cH(2d)$ (with respect to \eqref{eq:RRAff}) generated by
\eq
\begin{split}
&X^{\lambda_1} \otimes \dots \otimes X^{\lambda_{2d}} \quad ( \lambda_i = - \lambda_{2d+1-i} \textup{ for all }i), 
\\
&H_d, \quad H_{d+1}H_{d-1},\quad \dots,\quad H_1H_{2d-1}.
\end{split} 
\endeq
\subsubsection{Degenerate Affine Hecke Algebras}
\label{ex:dAHA}
Let $\cH^\Deg(d)$ be the degenerate affine Hecke algebra of type A, generated by $s_1, \dots, s_{d-1}, x_i, \dots, x_d$ (see Appendix~\ref{sec:AHA}). The $s_i$'s and $x_j$'s interact by the cross relations \eqref{eq:sxrel}.
Let $B = K[X]$ be the polynomial ring.
We choose
\eq
R = 1\otimes 1, \quad S = 0, \quad \sigma: X^a\otimes X^b \mapsto X^b\otimes X^a, \quad 
\rho: X^a \otimes X^b
\mapsto -\partial(X^a \otimes X^b).
\endeq
Then $\cH^\Deg(d) = K[X] \wr \cH(d)$ under the identification
\eq
s_i \mapsto H_i, 
\quad x_j \mapsto X^{(j)}.
\endeq
For $\lambda = \sum_i \lambda_i \epsilon \in P^+ := \sum_{i=1}^d \ZZ_+\epsilon_i$, we set $x^\lambda := x_1^{\lambda_1} \dots x_d^{\lambda_d}$.
Now, $\beta = - 1$ commutes with either $X\otimes 1$ or $1\otimes X$, and hence Corollary~\ref{cor:BL} follows.
Similar to the case for affine Hecke algebras,
our wreath relations \eqref{def:WR2} are equivalent to the Bernstein-Lusztig-type relations below, for $\lambda \in P^+, 1\leq i \leq d-1$: 
\eq\label{eq:dBLrel}
s_i x^\lambda= x^{s_i(\lambda)} s_i - \frac{x^\lambda - x^{s_i(\lambda)}}{x_i - x_{i+1}}.
\endeq
{
Since it is well-known (see \cite{Lu89}) that $\cH^\Deg(d)$  admits a basis of the form $\{x^\lambda w ~|~ \lambda \in P^+, w\in\Sigma_d\}$, conditions \eqref{def:wr1} -- \eqref{def:br5} hold as long as Theorem~\ref{thm:basis} is proved.
}

Kostant-Kumar's Nil Hecke rings (see Appendix~\ref{sec:NHR}) can be realized similarly by setting $R=0$, instead.
{
\subsubsection{Hecke-Type Categories}\label{sec:HC}
It is known that the Nil Hecke rings describe the endomorphism spaces in the Khovanov-Lauda-Rouquier (KLR) algebras. This generalizes to 
Elias' Hecke-type category, whose endomorphism spaces (of rank $d$) can be regarded as an algebra generated by 
$H_1, \dots, H_{d-1}$ and $B^{\otimes d}$ (where $B$ is a commutative ring; in contrast to our definition in which $B$ is an arbitrary associative algebra).
Their algebras satisfy our quadratic relations \eqref{def:QR2} as well as wreath relations \eqref{def:WR2}; while the braid relations are relaxed as follows:
\eq
H_{i} H_{i+1} H_i \in K^{\times} H_{i+1} H_iH_{i+1} +\sum_{x < s_i s_{i+1} s_i} (B\otimes B\otimes B) H_x.
\endeq
The key point is that our quantum wreath products is a special case of Elias' construction when the base algebra $B$ is commutative.
However, in our main example, the Hu algebra, the base algebra $B= \cH_q(\Sigma_m)$ is far from commutative.
}
\subsection{Rosso-Savage Algebras}
\label{ex:RS}
Let $z\in K$, and let $B$ be a certain Frobenius $K$-algebra with a Nakayama automorphism $\psi:B\to B$,  a trace map $\textup{tr}:B\to K$, a basis $I$ and its dual basis $\{b^\vee~|~b\in I\}$ with respect to $\textup{tr}$. Rosso and Savage introduced in \cite{Sa20, RS20} the Frobenius Hecke algebra $H_d(B,z)$, its affinization $H^\aff_d(B,z)$, and the affine wreath product algebra $\cA_d(B)$ which generalize the Hecke algebra, the affine Hecke algebra, and the degenerate affine Hecke algebra, respectively (see Appendix~\ref{def:RS} for details).

We have the following identifications as quantum wreath products:
\eq
H_d(B,z) = B\wr \cH(d),
\quad
H^\aff_d(B,z) = (B\hat{\otimes} K[X^{\pm1}]) \wr \cH(d),
\quad
\cA_d(B) = (B\hat{\otimes} K[X]) \wr \cH(d),
\endeq
where  $\hat{\otimes}$ denotes the twisted tensor product  given by ${(b\otimes1)X^{\pm1} = X^{\pm1}(\psi^{\pm1}(b) \otimes 1)}$ for $b\in B$.
Here $R = 1\otimes 1$ and $\sigma(a\otimes b) = b\otimes a$ for all three cases.
For $H_d(B,z)$, the other parameters are
\eq
S=z \sum_{b \in I} (b\otimes b^\vee) \in B\otimes B, 
\quad
\rho = 0.
\endeq
For $H^\aff_d(B,z)$, one has 
\eq
S=z\sum_{b \in I} (b\otimes b^\vee) \in B\otimes B, 
\quad
\rho: \begin{cases}
b_1\otimes b_2 \mapsto 0 &(b_i \in B);
\\
X \otimes 1 \mapsto -(1\otimes X) z \sum_{b \in I} (b\otimes b^\vee) \in B\otimes B. 
\end{cases}
\endeq
For $\cA_d(B)$, one has 
\eq
S=0, 
\quad
\rho: \begin{cases}
b_1\otimes b_2 \mapsto 0 &( b_i \in B);
\\
X \otimes 1 \mapsto - \sum_{b \in I} (b\otimes b^\vee) \in B\otimes B. 
\end{cases}
\endeq
In particular, these algebras include the Yokonuma-Hecke algebra $Y_{m,d} = KC_m \wr \cH(d)$, its affinization, 
Wan-Wang's \cite{WW08} wreath Hecke algebra $\cH^\Deg(G\wr \Sigma_d) = K[X]G \wr \cH(d)$,
Evseev-Kleshchev's super wreath product algebra.

{In \cite{SS21}, Savage and Stuart also considered variants of these algebra by replacing the quadratic relation with $H_i^2=0$. 
By a similar argument, one can show that their Frobenius nilCoxeter algebras are also special cases of ours.
Their Frobenius nilHecke algebras can also be included as long as the base algebra is purely even.}
\subsection{Affine Zigzag Algebras}
\label{ex:KM}
Readers are referred to Section~\ref{def:KM} for the definition of the zigzag algebra $Z_K(\Gamma)$, its affinization $Z^\aff_d(\Gamma)$ over a connected Dynkin diagram $\Gamma$ of finite type ADE.
Let $B = Z_{K[X]}(\Gamma)$ be the zigzag algebra over the polynomial ring $K[X]$.
Let $R = 1\otimes 1,  S = 0,  \sigma: a\otimes b \mapsto b\otimes a$, and let $\rho: B\otimes B \to B \otimes B$ be determined by, for $\gamma_i \in Z_K, i \neq j \in I$: 
\eq
\rho(\gamma_1 \otimes \gamma_2) = 0,
\quad
\rho(Xe_i \otimes e_i) = (c_1+c_2) (e_i \otimes e_i),
\quad
\rho(Xe_i \otimes e_j) = a_{j,i} \otimes a_{i,j}.
\endeq
In other words, we have the following local relations, for $\gamma_i \in Z_K, i \neq j \in I$:
\eq
\begin{split}
&H(\gamma_1\otimes \gamma_2) = (\gamma_2 \otimes \gamma_1) H,
\\
&H(Xe_i \otimes e_i) = (e_i \otimes Xe_i)H + (c_1+c_2) (e_i \otimes e_i),
\quad
H(Xe_i \otimes e_j) = (e_j \otimes Xe_i)H + a_{j,i} \otimes a_{i,j}.
\end{split}
\endeq
Then $Z_d^\aff(\Gamma) = Z_{K[X]} \wr \cH(d)$ under the identification
with $T_i \mapsto H_i$, $z_j \mapsto X^{(j)}$.
\section{Basis Theorem for Quantum Wreath Products}\label{sec:RepA}
This section is dedicated to determining the necessary and sufficient conditions on the choice of parameters such that that our quantum wreath product produces a unital associative algebra having a basis as given in 
Theorem~\ref{thm:basis}.

\subsection{}
{We start with a left $B^{\otimes d}$-module $V :=  B^{\otimes d} \otimes K \Sigma_d$, on which $B^{\otimes d}$ acts by left multiplication. Let $A := B\wr \cH(d)$. Our goal is to show that, under suitable conditions, $V$ can be given a right $A$-module structure, and the assignment $a\otimes w \mapsto a H_w$ for $a\in B^{\otimes d}, w\in \Sigma_d$ defines a right $A$-module isomorphism $V \to A$.
Note that $V$ admits a filtration $(V^\ell)_{\ell \in \NN}$ of $B^{\otimes d}$-modules given by
\eq
	V^\ell := \bigoplus_{\ell(w) \leq \ell} B^{\otimes d} \otimes K w.
\endeq

We fix a reduced expression $r(w)$ for all $w\in \Sigma_d$ that is compatible with others, i.e., $r(1) = 1$ and for every $w \in \Sigma_d \setminus \{1\}$, there exists a unique $s_i$ such that $r(w) = r(w s_i) s_i$ as reduced expressions.
Existence of such a choice follows from an inductive argument.

In order to eliminate the left-versus-right issue, in Section 5, we use the following postfix notation such that the juxtaposition of maps means the reversed composition of maps, i.e., for any two maps $f:M_1 \to M_2, g:M_2 \to M_3$, it is understood that
\eq\label{eq:postfix}
fg := g \circ f : M_1 \to M_3,
\quad
\textup{and}
\quad
(m)\cdot fg := (g\circ f)(m) \in M_3
\quad
\textup{for all}
\quad
m\in M_1,
\endeq
where $m\cdot f = f(m)$ is the postfix evaluation of $f$ at $m$.
In particular, for endomorphisms $f, g \in \End(M)$, by the juxtaposition $fg$ we mean the the multiplication in the opposite ring $\End(M_1)^{\textup{op}}$.


Next,  we define  (left) $B^{\otimes d}$-module homomorphisms
$f^{(\ell)}_a = f^{(\ell)}_a(r) \in \End(V^\ell)$, and $T_i^{(\ell)} = T_i^{(\ell)}(r):V^\ell \to V^{\ell+1}$, for all $a\in B^{\otimes d}, 1\leq i \leq d-1, \ell \geq 0$ in a recursive manner:
\eq
\label{def:alaction}
(1^{\otimes d}\otimes w) \cdot f_{a}^{(\ell)}
	:= \begin{cases}
	a \otimes 1, & \tif w = 1, \\
		(1^{\otimes d} \otimes w s_i) \cdot ( f_{\sigma_i(a)}^{(\ell - 1)} T_{i}^{(\ell - 1)} + f_{\rho_i(a)}^{(\ell - 1)}), & \tif  r(w) = r(w s_i) s_i, \\
	\end{cases}
\endeq
\eq
\label{def:Tilaction}
(1^{\otimes d} \otimes w) \cdot T_i^{(\ell)} := \begin{cases}
1^{\otimes d}\otimes ws_i , &\tif \ell(w) \lt \ell(w s_i), 
	\\
(1^{\otimes d}\otimes w s_i) \cdot (f_{S_i}^{(\ell-1)} T_i^{(\ell-1)} + f_{R_i}^{(\ell-1)}), &\tif \ell(w) \gt \ell(w s_i),
	\end{cases}
\endeq
for all $w$ with length $\ell(w) \leq \ell$.
Note that the recursion does terminate since $T_i^{(0)}$ and $f_a^{(0)}$ are given by $(b \otimes 1) \cdot T_i^{(0)} = b\otimes s_i$ and $(b\otimes 1) \cdot f_{a}^{(0)} = (ba)\otimes 1$ for any $b\in B^{\otimes d}$.
}
\begin{prop}\label{prop:fTcomp}
For a fixed $a \in B^{\otimes d}$ (or a fixed $1\leq i\leq d-1$, resp.), the maps $\{f_a^{(\ell)} ~|~ \ell \geq 0\}$ (or $\{T_i^{(\ell)} ~|~ \ell \geq 0\}$, resp.) are compatible, i.e.,
$f_a^{(\ell)}|_{V^j} = f_a^{(j)}$ and $T_i^{(\ell)}|_{V^j} = T_i^{(j)}$ for all $j \leq \ell$.
\end{prop}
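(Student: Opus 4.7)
The plan is to prove both compatibility statements jointly by strong induction on $\ell$, bundled with two auxiliary filtration-preservation facts that keep the recursion self-referential. At level $\ell$ the combined inductive assertion has four parts: (i) $f_a^{(\ell)}(V^j) \subseteq V^j$; (ii) $T_i^{(\ell)}(V^j) \subseteq V^{j+1}$; (iii) $f_a^{(\ell)}|_{V^j} = f_a^{(j)}$; and (iv) $T_i^{(\ell)}|_{V^j} = T_i^{(j)}$, each to hold for every $a \in B^{\otimes d}$, every $i \in \{1,\dots,d-1\}$, and every $0 \leq j \leq \ell$. The base case $\ell = 0$ is immediate, since the definitions of $f_a^{(0)}$ and $T_i^{(0)}$ on generators are stated directly and (i)--(ii) are visible at a glance.

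For the inductive step, assume all four assertions at all levels strictly below $\ell$. Since $f_a^{(\ell)}$ and $T_i^{(\ell)}$ are left $B^{\otimes d}$-module maps, it suffices to evaluate on basis vectors $1^{\otimes d} \otimes w$ with $\ell(w) = j \leq \ell$. The key observation is that the recursion in \eqref{def:alaction}--\eqref{def:Tilaction} uses only a length-reducing step $w \mapsto ws_i$ with $\ell(ws_i) = j - 1$, governed by the fixed, level-independent choice of compatible reduced expressions $r(\cdot)$. Consequently, the right-hand sides of the defining clauses at level $\ell$ and at level $j$ are obtained from each other by the substitution $\ell - 1 \leftrightarrow j - 1$ in the outer level labels, both applied to the same vector $1^{\otimes d} \otimes ws_i \in V^{j-1}$. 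Invoking (iii) at level $\ell-1$ equates the inner $f_{\sigma_i(a)}$- and $f_{\rho_i(a)}$-applications with their level-$(j-1)$ counterparts; invoking (i) at level $j-1$ places the post-$f$ vector in $V^{j-1}$; invoking (iv) at level $\ell - 1$ then equates the subsequent $T_i$-applications. Collecting these gives (iii) at level $\ell$, and the argument for (iv) is identical (with $S_i,R_i$ in place of $\sigma_i(a),\rho_i(a)$). Parts (i) and (ii) at level $\ell$ follow from the same dissection, using (i), (ii) at level $\ell - 1$ to control where intermediate vectors land.

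The main obstacle is the bookkeeping: the four parts must be interlocked so that at each recursive step the intermediate vector lies in a filtration piece small enough for the inductive hypothesis to apply. This is why (i) and (ii) must be carried through the induction alongside (iii) and (iv); trying to establish the compatibility statements alone leaves a gap at precisely the step where $T_i^{(\ell-1)}$ is applied to $(1^{\otimes d} \otimes ws_i) \cdot f^{(\ell-1)}_{\sigma_i(a)}$. One should also record that the length-reducing simple reflection $s_i$ is intrinsic to $w$ by the uniqueness clause in the chosen family $r$, so the two recursions really do branch in the same way. Notably, no parameter condition from \eqref{def:wr1}--\eqref{def:br5} is invoked here; those enter only in the subsequent step, where one verifies that the assembled maps define a right $A$-action on $V$.
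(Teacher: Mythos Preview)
Your proof is correct and follows essentially the same inductive unwinding of the recursion as the paper's. Two minor remarks: the paper inducts on $j$ rather than on $\ell$, which is cosmetic; and your auxiliary assertions (i) and (ii) are in fact superfluous, since once you have used (iii) to replace $f_{\sigma_i(a)}^{(\ell-1)}$ by $f_{\sigma_i(a)}^{(j-1)}$ on $V^{j-1}$, the image automatically lies in $V^{j-1}$ because $f_{\sigma_i(a)}^{(j-1)} \in \End(V^{j-1})$ by definition, and then (iv) applies directly.
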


\proof
We prove this by an induction on $j$.
The base case $f_a^{(\ell)}|_{V^0} = f_a^{(0)}$ and $T_i^{(\ell)}|_{V^0} = T_i^{(0)}$ follow from \eqref{def:alaction} and \eqref{def:Tilaction}, respectively. 
%
For the inductive case, pick any $b\otimes x \in V^j$ with $x \neq 1$ (i.e., $\ell(x) \leq j \leq \ell$). There is an $s_k$ such that $r(x) = r(w)s_k$, and thus $b\otimes w \in V^{j-1}$.
Then
\eq\label{eq:fal}
\begin{array}{ll}
(b\otimes x) \cdot f_a^{(\ell)} = (b\otimes w) \cdot (f_{\sigma_k(a)}^{(\ell-1)}T_k^{(\ell-1)}  + f_{\rho_k(a)}^{(\ell-1)})
&\textup{by }\eqref{def:alaction}
\\
\quad =(b\otimes w) \cdot (f_{\sigma_k(a)}^{(j-1)}T_k^{(j-1)}  + f_{\rho_k(a)}^{(j-1)})
&\textup{by the induction hypothesis}
\\
\quad = (b\otimes x) \cdot f_a^{(j)}. 
&\textup{by }\eqref{def:alaction}
\end{array}
\endeq
The proof of that $T_i^{(\ell)}|_{V^j} = T_i^{(j)}$ for all $j \leq \ell, 1\leq i \leq d-1$ splits into two cases.
The first case $\ell(x) > \ell(xs_i)$ follows from the fact that
\eq
\begin{array}{ll}
(b\otimes x)\cdot T_i^{(\ell)} = (b\otimes xs_i)\cdot
( f_{S_i}^{(\ell-1)}T_i^{(\ell-1)} + f_{R_i}^{(\ell-1)})
&\textup{by }\eqref{def:Tilaction}
\\
\quad = (b\otimes xs_i) \cdot (f_{S_i}^{(j-1)}T_i^{(j-1)} + f_{R_i}^{(j-1)}) 
&\textup{by the induction hypothesis}
\\
\quad = (b\otimes x) \cdot T_i^{(j)}.  
&\textup{by }\eqref{def:Tilaction}
\end{array}
\endeq
The other case $\ell(x) < \ell(xs_i)$ follows from the fact that $(b\otimes x)\cdot T_i^{(\ell)} = b\otimes xs_i = (b\otimes x) \cdot T_i^{(j)}$, thanks to \eqref{def:Tilaction}.
\endproof

Let $f_a = f_a(r), T_i = T_i(r) \in \End_{B^{\otimes d}}(V)$ be given by $f_a|_{V^\ell} = f_a^{(\ell)}$ and $T_i|_{V^\ell} = T_i^{(\ell)}$ for all $\ell$.  Thanks to Proposition~\ref{prop:fTcomp}, these maps are well-defined, and it leads to the following proposition from \eqref{def:alaction}--\eqref{def:Tilaction}.

\begin{prop} \label{prop:ATonV}
The following statements hold for all $w\in \Sigma_d, a\in B^{\otimes d}, 1\leq i \leq d-1$:
\begin{align}
\label{def:aaction}&	(1^{\otimes d}\otimes w) \cdot f_{a}
	:= \begin{cases}
	a \otimes 1, & \tif w = 1, \\
		(1^{\otimes d} \otimes w s_i) \cdot ( f_{\sigma_i(a)} T_{i} + f_{\rho_i(a)}), & \tif  r(w) = r(w s_i) s_i. \\
	\end{cases}
\\
\label{def:Tiaction}&	 ( 1^{\otimes d}\otimes w) \cdot T_i = \begin{cases}
	 1^{\otimes d}\otimes ws_i  , &\tif \ell(w) \lt  \ell(ws_i), \\
	(1^{\otimes d}\otimes w s_i) \cdot (f_{S_i} T_i + f_{R_i}), &\tif \ell(w) \gt \ell(ws_i). \\
\end{cases}
\end{align}
\end{prop}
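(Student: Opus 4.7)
The plan is to observe that this proposition is essentially immediate from Proposition~\ref{prop:fTcomp}: the identities \eqref{def:aaction}--\eqref{def:Tiaction} are just the recursions \eqref{def:alaction}--\eqref{def:Tilaction} lifted from the stratum $V^{\ell}$ to all of $V$. Fix $w \in \Sigma_d$ and set $\ell := \ell(w)$, so that $1^{\otimes d}\otimes w \in V^{\ell}$. Since $f_a$ and $T_i$ are by definition the common extensions of the compatible families $\{f_a^{(j)}\}_{j\geq 0}$ and $\{T_i^{(j)}\}_{j\geq 0}$ (well-defined by Proposition~\ref{prop:fTcomp}), we have
\[
(1^{\otimes d}\otimes w)\cdot f_a = (1^{\otimes d}\otimes w)\cdot f_a^{(\ell)},
\qquad
(1^{\otimes d}\otimes w)\cdot T_i = (1^{\otimes d}\otimes w)\cdot T_i^{(\ell)}.
\]

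For the $f_a$ identity, if $w = 1$ then \eqref{def:alaction} immediately gives $a\otimes 1$, matching the first case of \eqref{def:aaction}. Otherwise, let $s_i$ be the unique simple reflection with $r(w) = r(ws_i)s_i$; then $\ell(ws_i) = \ell - 1$ and \eqref{def:alaction} yields
\[
(1^{\otimes d}\otimes w)\cdot f_a^{(\ell)} \;=\; (1^{\otimes d}\otimes ws_i)\cdot\bigl(f_{\sigma_i(a)}^{(\ell-1)}T_i^{(\ell-1)} + f_{\rho_i(a)}^{(\ell-1)}\bigr).
\]
Since $1^{\otimes d}\otimes ws_i \in V^{\ell-1}$, another application of Proposition~\ref{prop:fTcomp} replaces each superscripted map by the corresponding global map $f_{\sigma_i(a)}$, $T_i$, $f_{\rho_i(a)}$, giving the second case of \eqref{def:aaction}.

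The argument for $T_i$ is parallel. If $\ell(w) < \ell(ws_i)$ then \eqref{def:Tilaction} gives $(1^{\otimes d}\otimes w)\cdot T_i^{(\ell)} = 1^{\otimes d}\otimes ws_i$. If $\ell(w) > \ell(ws_i)$, then $1^{\otimes d}\otimes ws_i \in V^{\ell-1}$ and \eqref{def:Tilaction} produces $(1^{\otimes d}\otimes ws_i)\cdot(f_{S_i}^{(\ell-1)}T_i^{(\ell-1)} + f_{R_i}^{(\ell-1)})$, which by Proposition~\ref{prop:fTcomp} equals the right-hand side of \eqref{def:Tiaction}. There is no real obstacle; the only point requiring care is bookkeeping the strata, since $T_i$ may map $V^{\ell-1}$ into $V^{\ell}$, but this is harmless because $f_{\sigma_i(a)}$ and $f_{S_i}$ are already defined on all of $V$. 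In this sense the proposition is a formal corollary of the compatibility established in the previous result.
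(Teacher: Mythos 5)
Your proposal is correct and matches the paper's approach: the paper itself treats this proposition as an immediate consequence of the compatibility established in Proposition~\ref{prop:fTcomp} together with the defining recursions \eqref{def:alaction}--\eqref{def:Tilaction}, which is exactly the argument you spell out. The stratum bookkeeping you note (including that $T_i$ raises the filtration degree) is handled correctly.
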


We remark that one cannot define $f_a(r)$ and $T_i(r)$ directly using \eqref{def:aaction}--\eqref{def:Tiaction} since a circular definition will occur. 
Later, we will show that these maps are actually independent of $r$. 

{
\begin{cor}\label{prop:Klinearity}
The map $f: B^{\otimes d} \to \End_{B^{\otimes d}}(V)^{\textup{op}}, a \mapsto f_a$ is $K$-linear.
In particular, $f_a+f_b = f_{a+b}$ for all $a, b\in B^{\otimes d}$.
\end{cor}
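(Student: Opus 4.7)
The plan is to prove $K$-linearity on the spanning set $\{1^{\otimes d} \otimes w : w \in \Sigma_d\}$ of $V$ (viewed as a set of $K$-generators over $B^{\otimes d}$-multiples), since each $f_a$ is a $B^{\otimes d}$-module endomorphism and hence additivity and $K$-scaling on these generators propagates to all of $V$. The argument will proceed by induction on the Coxeter length $\ell(w)$, using the recursive definition \eqref{def:aaction} together with Proposition~\ref{prop:fTcomp} to make sure the inductive hypothesis applies.

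For the base case $\ell(w) = 0$, i.e.\ $w = 1$, the first branch of \eqref{def:aaction} gives $(1^{\otimes d} \otimes 1) \cdot f_a = a \otimes 1$, and the identities $(a+b) \otimes 1 = a \otimes 1 + b \otimes 1$ and $(ka) \otimes 1 = k (a \otimes 1)$ are immediate from the $K$-module structure of $V = B^{\otimes d} \otimes K\Sigma_d$.

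For the inductive step, fix $w$ with $\ell(w) \geq 1$ and let $s_i$ be the unique simple reflection with $r(w) = r(ws_i) s_i$. Applying the second branch of \eqref{def:aaction} gives
\[
	(1^{\otimes d} \otimes w) \cdot f_{a+b} \;=\; (1^{\otimes d} \otimes ws_i) \cdot \bigl(f_{\sigma_i(a+b)} T_i + f_{\rho_i(a+b)}\bigr).
\]
Since $\sigma_i, \rho_i \in \End_K(B^{\otimes d})$ are $K$-linear by construction (see \eqref{def:Yi}), we may rewrite $\sigma_i(a+b) = \sigma_i(a) + \sigma_i(b)$ and likewise for $\rho_i$. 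The element $1^{\otimes d} \otimes ws_i$ lies in $V^{\ell(w)-1}$, so by the inductive hypothesis the identities $f_{\sigma_i(a)+\sigma_i(b)} = f_{\sigma_i(a)} + f_{\sigma_i(b)}$ and $f_{\rho_i(a)+\rho_i(b)} = f_{\rho_i(a)} + f_{\rho_i(b)}$ hold when evaluated there. Using that $T_i$ is $K$-linear and then reassembling via \eqref{def:aaction} produces $(1^{\otimes d} \otimes w) \cdot f_a + (1^{\otimes d} \otimes w) \cdot f_b$. The identical argument (replacing $a+b$ by $ka$ throughout) handles the scalar case.

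There is no real obstacle here beyond bookkeeping: everything is an honest induction riding on the $K$-linearity of $\sigma_i, \rho_i, T_i$. The only point to verify with care is that the inductive recursion stays inside the filtration — that is, that $f_{\sigma_i(a)}$ and $f_{\rho_i(a)}$ applied to $(1^{\otimes d} \otimes ws_i) \in V^{\ell(w)-1}$ land in $V^{\ell(w)-1}$ before $T_i$ maps them into $V^{\ell(w)}$ — but this is precisely the content of Proposition~\ref{prop:fTcomp}, so no extra work is required.
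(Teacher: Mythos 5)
Your proposal is correct and follows essentially the same argument as the paper: reduce to the spanning elements $1^{\otimes d}\otimes w$, verify the base case $w=1$ directly from \eqref{def:aaction}, and induct on $\ell(w)$ using the recursion together with the $K$-linearity of $\sigma_i$ and $\rho_i$. The extra care you take about the filtration via Proposition~\ref{prop:fTcomp} is fine but not needed beyond what the paper already records.
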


\begin{proof}
It suffices to prove that $f$ is $K$-linear on $1^{\otimes d} \otimes w$ for any $w \in \Sigma_d$.
From \eqref{def:aaction}, it is clear that $f$ is $K$-linear in $1^{\otimes d} \otimes w$ if $w=1$.
If $\ell(w) > 1$, then there is an $s_i$ such that $r(w) = r(ws_i)s_i$.
By an induction on $\ell(w)$, $f$ is $K$-linear on  $1^{\otimes d} \otimes w$ by combining \eqref{def:aaction} and the fact that both $\sigma_i$ and $\rho_i$ are $K$-linear.
\end{proof}
}

\subsection{Grand Loop Argument}
In the following we present a key ingredient towards the proof of Theorem~\ref{thm:basis} that has a similar flavor as Kashiwara's grand loop argument, i.e., we will prove intermediate statements below based on an induction on $\ell$:
\makeatletter\tagsleft@true\makeatother
\begin{align}
\label{eq:Wl}\labeltarget{eq:Wl}&  T_i f_a = f_{\sigma_i(a)} T_i  + f_{\rho_i(a)} \in \Hom_{B^{\otimes d}}(V^\ell, V)
 \textup{ for all } a\in B^{\otimes d}, 1\leq i \leq d-1, \tag{$W[\ell]$}
\\
\label{eq:Ml}\labeltarget{eq:Ml}& f_a f_b = f_{a b}  \in \Hom_{B^{\otimes d}}(V^\ell, V)
 \textup{ for all } a,b \in B^{\otimes d}, \tag{$M[\ell]$}
\\
\label{eq:Ql}\labeltarget{eq:Ql}&  T_i  T_i = f_{S_i} T_i  + f_{R_i} \in \Hom_{B^{\otimes d}}(V^\ell, V)
 \textup{ for all }1\leq i \leq d-1, \tag{$Q[\ell]$}
\\
\label{eq:B2l}\labeltarget{eq:B2l}& T_i  T_j  =  T_j {T_i}  \in \Hom_{B^{\otimes d}}(V^\ell, V)
\textup{ for all } |i- j| >1, \tag{$B_2[\ell]$}
\\
\label{eq:B3l}\labeltarget{eq:B3l}& T_i  T_j  T_i  = T_j  T_i T_j  \in \Hom_{B^{\otimes d}}(V^\ell, V)
\textup{ for all }|i- j| =1, \tag{$B_3[\ell]$}
\\
\label{eq:Rl}\labeltarget{eq:Rl}& { \textup{$f_a \in \Hom_{B^{\otimes d}}(V^\ell, V)
$ does not depend on $r$. }} \tag{$R[\ell]$}
\end{align}

{
Note that the Condition $R[\ell]$ implies that $T_i \in \Hom_{B^{\otimes d}}(V^\ell, V)
$ does not depend on $r$ as well.
}

\makeatletter\tagsleft@false\makeatother
\begin{prop}\label{prop:loop}
{Assume that \eqref{def:wr1}--\eqref{def:qu1} hold, and \eqref{def:br1}--\eqref{def:br5} hold additionally if $d\geq 3$.}
\begin{enumerate}
\item[\textup{(W)}] Condition $W[\ell]$ follows from $R[\ell + 1]$, $M[\ell - 1]$, $W[\ell - 1]${, and $Q[\ell-1]$}.
\item[\textup{(M)}] Condition $M[\ell]$ follows from $W[\ell - 1]$ and $M[\ell - 1]$.
\item[\textup{(Q)}] Condition $Q[\ell]$ follows from combining $Q[\ell - 1]$, $M[\ell - 1]$, and $W[\ell - 1]$.
\item[\textup{(B2)}] Condition $B_2[\ell]$ follows from combining $B_2[\ell - 1]$, $Q[\ell]$, $W[\ell - 1]$.
\item[\textup{(B3)}] Condition $B_3[\ell]$ follows from $B_3[\ell - 1]$, $M[\ell - 1]$, $Q[\ell + 1]$, and {$W[\ell]$}.
\item[\textup{(R)}] Condition $R[\ell]$ follows from combining $R[\ell - 1]$, $B_2[\ell - 2]$, $B_3[\ell - 3]$, and $Q[\ell - 3]$.
\end{enumerate}
\end{prop}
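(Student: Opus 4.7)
The plan is to verify each of the six items at level $\ell$ by evaluating both sides of the stated equality on the generators $1^{\otimes d} \otimes w$ with $\ell(w) \leq \ell$, which suffices because every map in sight is $B^{\otimes d}$-linear. Throughout, the recursive formulas \eqref{def:aaction}--\eqref{def:Tiaction} of Proposition~\ref{prop:ATonV} drive the reductions, and the conditions \eqref{def:wr1}--\eqref{def:br5} are exactly the scalar identities that will appear when matching the two sides.

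For (M), I would induct on $\ell(w)$. The base case $w=1$ is immediate from $(b\otimes 1)\cdot f_a = ba\otimes 1$. For $\ell(w) > 0$, pick $s_i$ with $r(w) = r(ws_i)s_i$ and expand $(1^{\otimes d}\otimes w)\cdot f_a f_b$ via \eqref{def:aaction}; $W[\ell-1]$ slides the intermediate $T_i$ past $f_b$, the skew-derivation identity from \eqref{def:wr2} collects the $\sigma_i,\rho_i$ contributions, and $M[\ell-1]$ collapses the resulting $f$-products to $f_{ab}$. For (W), apply $T_i$ first and split on whether $\ell(ws_i)$ is larger or smaller than $\ell(w)$: in the first case, $f_a$ lands on an element of $V^{\ell+1}$, so $R[\ell+1]$ is needed in order to freely choose the reduced expression $r(ws_i)=r(w)s_i$ that turns the right-hand side into the recursion; in the second case, nested $T_i$'s are flattened via $Q[\ell-1]$ and $M[\ell-1]$, and the coefficient identities from \eqref{def:qu1} are precisely what make both sides agree.

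The verification of (Q), (B2), (B3) proceeds by a parallel case analysis on the lengths $\ell(ws_i), \ell(ws_j), \ell(ws_is_j)$, etc. The quadratic identity (Q) is direct when $\ell(ws_i)>\ell(w)$ and follows from \eqref{def:TTT} combined with $Q[\ell-1]$, $M[\ell-1]$, $W[\ell-1]$ otherwise. The commutation (B2) reduces after expansion to \eqref{def:br1} in the generic case and uses $Q[\ell]$ and $W[\ell-1]$ to handle the subcases where the two $T$'s meet. The braid relation (B3) is the most delicate: after fully unfolding both $T_iT_jT_i$ and $T_jT_iT_j$ and using the just-established $W[\ell]$ to pass $f$-maps through $T$-maps, the resulting identity in $B^{\otimes d}$ is exactly the collection \eqref{def:br1}--\eqref{def:br5}, while $Q[\ell+1]$ is invoked when two consecutive $T_i$'s arise in $V^{\ell+1}$ during the comparison.

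Finally, (R) is the subtlest step. By Matsumoto's theorem, two reduced expressions for a fixed $w$ are connected by a chain of braid and commutation moves, so by induction it suffices to treat a single move. A braid or commutation move changes $r(w)$ in a window of three consecutive letters; unwinding \eqref{def:aaction} through this window reduces the comparison of the two resulting values of $f_a$ to an identity in $\Hom_{B^{\otimes d}}(V^{\ell-3},V)$, where $B_2[\ell-2]$ handles commutation moves, $B_3[\ell-3]$ handles braid moves, and $Q[\ell-3]$ together with $R[\ell-1]$ absorb the residual $\sigma$- and $\rho$-factors produced by the recursion. The main obstacle will be the bookkeeping inside (R) and (B3): each side of the equality expands into a large sum after repeated application of the recursion, and one must check that the intricate identities \eqref{def:br2}--\eqref{def:br5} match these sums termwise, while keeping careful track of which level of which condition is invoked at each step so that the mutual induction actually closes.
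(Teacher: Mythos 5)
Your outline follows essentially the same route as the paper's Appendix~B proof: evaluate both sides on $1^{\otimes d}\otimes w$, split cases by whether $\ell(ws_i)$ increases or decreases, unwind the recursion \eqref{def:alaction}--\eqref{def:Tilaction}, and match the resulting coefficients against \eqref{def:wr1}--\eqref{def:br5}, with $R[\ell+1]$ entering in (W) exactly to permit the choice $r(ws_i)=r(w)s_i$ and Matsumoto-type reduction to a single braid/commutation move in (R). The remaining work you defer (the termwise matching in (B3) and (R), and avoiding $W[\ell-1]$ inside (R) by expanding via $R[\ell-1]$ and $R[\ell-2]$ instead) is precisely the explicit computation the paper carries out, so the proposal is a correct plan of the same argument.
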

\proof
The proposition follows from an involved calculation that can be found in Appendix \ref{sec:loopproof}. 
\endproof
\begin{lemma}\label{lem:loop}
{Assume that \eqref{def:wr1}--\eqref{def:qu1} hold, and \eqref{def:br1}--\eqref{def:br5} hold additionally if $d\geq 3$.}
Then, $W[\ell], M[\ell], Q[\ell], B_2[\ell], B_3[\ell]$ and $R[\ell]$ hold for all $\ell$.
\end{lemma}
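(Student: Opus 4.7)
The plan is to carry out a grand loop induction that bundles all six conditions together. The subtlety identified by Proposition~\ref{prop:loop} is that $W[\ell]$ depends on $R[\ell+1]$ and $B_3[\ell]$ depends on $Q[\ell+1]$, so no induction on $\ell$ that treats the conditions individually can close the loop. To resolve this, I would define the composite predicate
\[
P[\ell] := R[\ell+1]\wedge W[\ell]\wedge M[\ell]\wedge Q[\ell+1]\wedge B_2[\ell]\wedge B_3[\ell]
\]
and prove by strong induction on $\ell\geq 0$ that $P[\ell]$ holds, adopting the convention that any condition carrying a negative index is vacuously true (so that the proposition still applies at the lower boundary).

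Given $P[\ell']$ for every $\ell'<\ell$, the six clauses of $P[\ell]$ are proved in the following order, each by a direct invocation of the corresponding clause of Proposition~\ref{prop:loop}: first $R[\ell+1]$, whose inputs $R[\ell], B_2[\ell-1], B_3[\ell-2], Q[\ell-2]$ are all delivered by earlier $P$'s; then $W[\ell]$, using the freshly proved $R[\ell+1]$ together with $M[\ell-1], W[\ell-1], Q[\ell-1]$; then $M[\ell]$ from $W[\ell-1], M[\ell-1]$; then $Q[\ell+1]$ from $Q[\ell], M[\ell], W[\ell]$; then $B_2[\ell]$ from $B_2[\ell-1], Q[\ell], W[\ell-1]$; and finally $B_3[\ell]$, which needs the just-obtained $Q[\ell+1]$ and $W[\ell]$. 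This ordering is precisely what closes the loop, since each step only cites items that are either already part of the inductive hypothesis $P[\ell'], \ell'<\ell$, or have been secured in the preceding substep of the current round.

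The base case $P[0]$ requires verifying the six statements directly on $V^0 = B^{\otimes d}\otimes K$, where the recursive definitions \eqref{def:alaction}--\eqref{def:Tilaction} collapse to $(b\otimes 1)\cdot f_a = ba\otimes 1$ and $(b\otimes 1)\cdot T_i = b\otimes s_i$; the required identities then reduce to the axioms \eqref{def:wr1}--\eqref{def:qu1} (and, for $d\geq 3$, \eqref{def:br1}--\eqref{def:br5}) by short direct computations, since evaluating both sides on $b\otimes 1$ lands in $V^2$ at worst and unwinds via one application of the defining recursions. The main obstacle is organizational rather than conceptual: once Proposition~\ref{prop:loop} is in hand and the composite $P[\ell]$ is chosen with the correct offsets — in particular building $R[\ell+1]$ and $Q[\ell+1]$ rather than $R[\ell], Q[\ell]$ into $P[\ell]$ — the induction proceeds almost automatically, with all the genuine analytic content already absorbed into Proposition~\ref{prop:loop}.
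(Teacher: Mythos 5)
Your proposal is correct and follows essentially the same route as the paper: a grand loop induction powered by Proposition~\ref{prop:loop}, with the steps applied in the same order (R, then W, M, Q, then the braid conditions); the only difference is bookkeeping, since the paper carries the invariant $W[\ell-2], M[\ell-1], Q[\ell-1], B_2[\ell-2], B_3[\ell-3], R[\ell-1]$ while you package it as $P[\ell]=R[\ell+1]\wedge W[\ell]\wedge M[\ell]\wedge Q[\ell+1]\wedge B_2[\ell]\wedge B_3[\ell]$, and I have checked that your offsets make every invocation of Proposition~\ref{prop:loop} legitimate. The treatment of the base case is at the same (terse) level as the paper's.
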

\begin{proof}
The base cases can be verified directly.
For the inductive step, we assume that
$W[i-2], M[i-1], Q[i-1], B_2[i-2], B_3[i-3], R[i-1]$ hold for all $i \leq \ell$.
Therefore, for $n \in \{2,3\}$,
\[
\begin{array}{lllllll}
&W[\ell-2],&M[\ell-1],&Q[\ell-1],&B_n[\ell-n],&R[\ell-1]& \textup{hold by inductive hypothesis,}\\
\Rightarrow&W[\ell-2],&M[\ell-1],&Q[\ell-1],&B_n[\ell-n],&R[\ell]& \textup{hold by Proposition~\ref{prop:loop} (R),}
\\
\Rightarrow&W[\ell-1],&M[\ell-1],&Q[\ell-1],&B_n[\ell-n],&R[\ell]& \textup{hold by Proposition~\ref{prop:loop} (W),}
\\
\Rightarrow&W[\ell-1],&M[\ell],&Q[\ell-1],&B_n[\ell-n],&R[\ell]& \textup{hold by Proposition~\ref{prop:loop} (M),}
\\
\Rightarrow&W[\ell-1],&M[\ell],&Q[\ell],&B_n[\ell-n],&R[\ell]&  \textup{hold by Proposition~\ref{prop:loop} (Q),}
\\
\Rightarrow&W[\ell-1],&M[\ell],&Q[\ell],&B_n[\ell+1-n],&R[\ell]&   \textup{hold by Proposition~\ref{prop:loop} (Bn).}
\end{array}
\]
The proof concludes by proceeding inductively. 
\end{proof}
{
We remark that the proof of Lemma~\ref{lem:loop} work perfectly even for the degenerate case when $d=2$.
Since there is a unique choice of $r$ for $\Sigma_2$, $R[\ell]$ holds for all $\ell$. 
On the other hand, $B_n[\ell]$ are vacuous statements and hence always are true.
The remaining conditions are verified by using the induction above. The grand loop degenerates to a loop that only involves $W[i]$'s, $M[i]$'s, and $Q[i]$'s.
The implications therein are given by Proposition~\ref{prop:loop} (M), (Q), and (W), which in fact  only require \eqref{def:wr1}--\eqref{def:qu1}.
}

\subsection{Proof of the Basis Theorem \ref{thm:basis}}
\begin{proof}
{
First, we} prove that (a) is equivalent to (b). One direction is already taken care of by Proposition~\ref{prop:nec}. For the other direction,
{we will give $V$ a right $A$-module structure, and prove that} the left $B^{\otimes d}$-module homomorphism $\phi : V \to A$, $a \otimes w \mapsto a H_w$ is a right $A$-module isomorphism.

Step 1: We first show that $\phi$ is onto. Note first an arbitrary element $x\in A$ is a sum of elements {of} the form $a_1 \cdots a_n$ for some $n=n(x)$ and each $a_j$ is {either} a generator $H_i$ or an element in $B^{\otimes d}$.
By applying the wreath, quadratic, and braid relations iteratively, $x$ becomes a sum of elements of the form
\eq
b H_{i_1} \cdots H_{i_N}, 
\quad b \in B^{\otimes d},
\quad N \leq n,
\quad s_{i_1} \cdots s_{i_N} \textup{ is a reduced expression},
\endeq
and thus $\phi$ is onto.

Step 2: We show that $\phi$ is an isomorphism. Thanks to Lemma~\ref{lem:loop}, $W[i], M[i], Q[i], B_2[i], B_3[i]$ and $R[i]$ hold for all $i$.
Hence, the assignment $a \mapsto f_a, H_w \mapsto T_w$ induces an algebra homomorphism
$\Phi: A \to \End_{B^{\otimes d}}(V)^{\textup{op}}$,
where $T_w := T_{i_1} \cdots T_{i_N}$ if $w = s_{i_1} \cdots s_{i_N}$ is reduced.
Using this map, we obtain a right $A$-module structure of $V$.

Note that any element in $V$ is of the form $v = \sum_{w\in\Sigma_d} b_w \otimes w$ for some $b_w= b_w(v) \in B^{\otimes d}$. 
It suffices to show that if $\sum_{w\in\Sigma_d} b_w H_w = 0$ then $\sum_{w\in\Sigma_d} b_w\otimes w = 0$. Indeed, 
\begin{align}
\sum_{w\in\Sigma_d} b_w\otimes w 
&= \sum_{w\in\Sigma_d} (b_w\otimes 1)\cdot T_w
= \sum_{w\in\Sigma_d} (1^{\otimes d}\otimes 1)\cdot f_{b_w} T_w
\notag \\
 &= (1^{\otimes d}\otimes 1)\cdot \sum_{w\in\Sigma_d} f_{b_w} T_w
 = (1^{\otimes d}\otimes 1)\cdot \Phi(\textstyle\sum_{w\in\Sigma_d} b_w H_w)
 = 0. 
 \end{align}
{Therefore, (a) and (b) are equivalent.

Next, we show that  (b) and (c) are equivalent. 
For each $w\in \Sigma_d$, let $\underline{w} = (s_{i_1}, \dots, s_{i_n})$ be a fixed reduced expression of $w$, and set
$\sigma_{\underline{w}} := \sigma_{i_1} \circ \dots \circ \sigma_{i_n}$.
By applying the wreath, quadratic, and braid relations iteratively to a typical element $H_w b \in A$, one obtains
\eq\label{eq:bcequiv}
	H_w b \in \sigma_{\underline{w}}(b) H_w + \sum_{x \lt w} B^{\otimes d} H_x,
\endeq
with respect to the Bruhat order on $\Sigma_d$.
By substituting $b$ for $\sigma_{\underline{w}}\inv(b)$, \eqref{eq:bcequiv} becomes
\eq\label{eq:bcequiv2}
	b H_w \in H_w \sigma_{\underline{w}}^{-1}(b) + \sum_{x \lt w} B^{\otimes d} H_x  
	\subseteq H_w \sigma_{\underline{w}}^{-1}(b) + \sum_{x \lt w}  H_xB^{\otimes d},
\endeq
where the latter inclusion follows applying the former part of \eqref{eq:bcequiv2} to every $B^{\otimes d} H_x$, iteratively.
In other words, the transition matrices are invertible since every $\sigma_{\underline{w}}$ is an automorphism. 
Thus, (b) and  (c) are equivalent. 
}
\end{proof}

\subsection{Elias' Basis Theorem}\label{sec:RelBasis}
In the study of Hecke-type categories (see Section~\ref{sec:HC}), the base algebra $B$ is always a commutative ring. In our setup, the base algebra $B$ can be an associative algebra. 
A fundamental question is to determine whether a diagrammatic algebra generated by crossings and dots with relations given by resolving crossings 
has the right size. 
In \cite{E22}, Elias proved a Bergman diamond lemma for the endomorphism algebras (see Section~\ref{sec:HC}) regarding sufficient and necessary conditions
in terms of (conceptual) resolvable ambiguities,
which corresponds to the very first step of our proof of Theorem~\ref{thm:basis}.

In particular, our conditions \eqref{def:wr1}--\eqref{def:br5} describe explicitly the requirements on the choice of parameters in order to resolve the minimal set of ambiguities given in \cite[(5.1), (5.11)]{E22}, except for a couple of them that do not appear in our setup since we require the braid relations to hold in our case.

\section{The Hu algebra $\cA(m)$ and its generalization  $\cH_{m\wr d}$} \label{sec:Hu}
In this section, Hu's original construction of $\cA(m)$ is presented, in which the extra generator $H_1 \in \cH_q(\Sigma_{2m})$ is defined using an implicit procedure via Jucys-Murphy elements $u_m^{\pm}$ of the Hecke algebra of type $B_{2m}$ with unequal parameters $(1,q)$.
Our new result is an explicit formula for $H_1$, which leads to a construction of a bar-invariant basis for $\cA(m)$. 
The explicit formula also makes it possible to construct a generalized Hu algebra $\cH_{m\wr d} \subseteq \cH_q(\Sigma_{md})$.
\subsection{The Construction of the Element $h_{m}$}
Let $d=2m$. In order to quantize \eqref{eq:towerG}, consider the type B Hecke algebra $\cH^\B = \cH_{(1,q)}(W(\B_{2m}))$ of unequal parameters $(1,q)$.
One can realize the Hecke algebras for $\Sigma_m \times \Sigma_m$ and for $\Sigma_{2m}$ as subalgebras of $\cH^\B$ via the generating sets $\{T_1, \dots, T_{m-1}, T_{m+1}, \dots T_{d-1}\}$ and $\{T_1, \dots, T_{d-1}\}$, respectively.
It is well-known that $T_w = T_{i_1} \cdots T_{i_r} \in \cH^\B$ is well-defined if $w = s_{i_1} \cdots s_{i_r} \in W(B_d)$ is a reduced expression.
The tower of groups in \eqref{eq:towerG} can be recovered by taking the specialization $q=1$ from the following tower of algebras:
\eq\label{eq:towerA}
\cH_q(\Sigma_m \times \Sigma_m) 
\subseteq 
\cA(m)
\subseteq
\cH_q(\Sigma_{2m})
\subseteq \cH_{(1,q)}(W(\B_{2m})),
\endeq
where $\cA(m)$ is the Hu algebra which we will define shortly.
From now on, all the computation are carried out inside $\cH^\B$ in the sense of \eqref{eq:towerA}.

\Def
Let $T_{a\to b}$ and $T_{a\to b \to a}$ be the element in $\cH^\B$ corresponding to $s_{a\to b}$ and $s_{a\to b\to a}$ (see \eqref{def:stos}), respectively. 
Denote the Jucys-Murphy elements by
\eq
u_k^{\pm} = \prod_{i=0}^{k-1} (q^{i} \pm T_{i \to 0 \to i}) \in \cH^\B.
\endeq
It is well-defined because its factors commute pairwise.
Note that an empty product is understood as 1 so $u^{\pm}_0 = 1$.
The following result is a useful variant of \cite[Lemma 4.9]{DJM95}.
\endDef
\begin{lem}\label{lem:computeh}
\begin{itemize}
\item[(a)] $u^+_j$ commutes with $T_i$ for $i \in \{ 0,1, \dots, d-1\}\setminus \{j\}$. 
\item[(b)] For $i\geq 1$, $u_i^+ T_{i\to 0} = u_{i+1}^+ T_{1\to i}\inv - q^i u_i^+ T_{1\to i}\inv$.
\item[(c)] Let $x\in \Sigma_{2m}$, $i\geq 1$, and let $\cH := \cH_q(\Sigma_{2m})$. Then
\eq\label{eq:prehm}
u_i^+ T_x T_0\in u_i^+ \cH +\sum_{j > i} \cH u_j^+ \cH.
\endeq
In particular, 
\eq\label{eq:ui0}
u_i^+ T_{i\to 0} \in u_i^+(-q^i T_{1\to i}\inv) + \sum_{j > i} \cH u_j^+ \cH.
\endeq
\end{itemize}
\end{lem}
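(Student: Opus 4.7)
The plan is to establish (a), (b), (c) in that order, reducing the main inclusion in (c) to (a) and (b).

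Part (a) follows from standard properties of type B Jucys--Murphy elements: the Murphy $T_{k-1\to 0\to k-1}$ commutes with every $T_n$ for $n\neq k-1$, and the different Murphys pairwise commute. Commutation of $u_j^+$ with $T_n$ for $n\geq j$ is then immediate since $u_j^+$ is a polynomial in the first $j$ Murphys. The more delicate case $n\leq j-1$ (where individual factors $q^k + T_{k\to 0\to k}$ fail to commute with $T_n$) uses the full product structure of $u_j^+$ and is handled by induction on $j$, invoking $T_0T_1T_0T_1=T_1T_0T_1T_0$, along the lines of \cite[Lemma 4.9]{DJM95}. A key consequence I need later is $T_0 u_i^+ = u_i^+ = u_i^+ T_0$ for $i\geq 1$, since the leftmost factor satisfies $(1+T_0)T_0 = 1+T_0$ (as $T_0^2=1$ when $Q=1$) and all remaining factors commute with $T_0$.

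Part (b) is a one-line calculation: $u_{i+1}^+T_{1\to i}^{-1} - q^i u_i^+T_{1\to i}^{-1} = u_i^+(q^i+T_{i\to 0\to i})T_{1\to i}^{-1} - q^i u_i^+T_{1\to i}^{-1} = u_i^+ T_{i\to 0\to i}T_{1\to i}^{-1}$, and the reduced factorization $s_{i\to 0\to i} = s_{i\to 0}\cdot s_{1\to i}$ (the lengths $2i+1 = (i+1)+i$ match) gives $T_{i\to 0\to i} T_{1\to i}^{-1} = T_{i\to 0}$.

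For the main inclusion in (c), my plan is to reduce to (b) via the parabolic decomposition of $x\in\Sigma_{2m}$ relative to the stabilizer $\mathrm{Stab}(1) = \langle s_2,\ldots,s_{2m-1}\rangle$: setting $c := x(1)$, write $x = s_{c-1\to 1}\cdot y$ (with $s_{c-1\to 1}$ empty if $c=1$), where $y\in\mathrm{Stab}(1)$ and $\ell(x) = (c-1)+\ell(y)$. Since $T_y$ commutes with $T_0$, and $s_{c-1\to 0} = s_{c-1\to 1}\cdot s_0$ is reduced, we get $T_x T_0 = T_{c-1\to 0}T_y$. Because $u_i^+\cH + \sum_{j>i}\cH u_j^+\cH$ is a right $\cH$-module, it suffices to show $u_i^+ T_{c-1\to 0}$ lies there, which I handle in three cases. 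If $c-1<i$, part (a) lets me commute $u_i^+$ past every factor of $T_{c-1\to 0}$; after absorbing $T_0$ via $T_0 u_i^+ = u_i^+$, I land in $u_i^+ T_{c-1\to 1}\subseteq u_i^+\cH$. If $c-1=i$, part (b) applies directly. If $c-1>i$, factor $T_{c-1\to 0} = T_{c-1\to i+1}\cdot T_{i\to 0}$; by (a), $T_{c-1\to i+1}$ commutes with $u_i^+$ (all its indices exceed $i$), and (b) then handles $u_i^+ T_{i\to 0}$, landing in $u_i^+\cH + \cH u_{i+1}^+\cH$.

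The ``in particular" clause is immediate from (b): $u_{i+1}^+ T_{1\to i}^{-1}$ lies in $\cH u_{i+1}^+\cH\subseteq \sum_{j>i}\cH u_j^+\cH$, and the remaining $-q^i u_i^+ T_{1\to i}^{-1}$ has the claimed form. The hard part is part (a): while individual $L_k$-versus-$T_n$ commutations are classical, the assertion that $u_j^+$ commutes with $T_n$ for $n<j-1$ does not follow from commuting the factors separately and genuinely requires the full polynomial structure of $u_j^+$, invoking a careful induction as in \cite{DJM95}. Once (a) is in hand, (b) is trivial and (c) follows from the clean combinatorial reduction above.
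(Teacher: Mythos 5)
Your argument is correct and, for parts (b) and (c), runs on the same engine as the paper's: decompose $x$, slide $u_i^+$ past everything that commutes with it using (a), and feed the residual $u_i^+T_{i\to 0}$ into (b). The one genuine difference is the decomposition in (c). You write $x = s_{c-1\to 1}\,y$ with $y$ in the stabilizer of $1$ (so $T_y$ commutes with $T_0$ and $T_xT_0 = T_{c-1\to 0}T_y$), then run a three-way case analysis on $c-1$ versus $i$. The paper instead writes $x = yc$ with $y\in\Sigma_i\times\Sigma_{2m-i}$ (so $T_y$ commutes with $u_i^+$ by (a)) and $c$ the shortest coset representative; the payoff of that choice is that such a $c$, if it moves $1$, necessarily begins with $s_{i\to 1}$, so one lands directly on $u_i^+T_{i\to 0}$ and only two cases arise. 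Your version costs one extra case ($c-1>i$, where you first peel off $T_{c-1\to i+1}$ by (a)), but is otherwise equivalent, and your observation that the target $u_i^+\cH+\sum_{j>i}\cH u_j^+\cH$ is a right $\cH$-module correctly disposes of the trailing $T_y$.

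One correction to your sketch of (a): the element $T_{k\to 0\to k}$ commutes with $T_n$ only for $n\notin\{k,k+1\}$, not for all $n\neq k$ (as a group element, $s_{k\to 0\to k}$ is the sign change in position $k+1$, which does not commute with $s_{k+1}$). Taken literally, your stated criterion would "prove" that $u_j^+$ commutes with $T_j$, which is false and is precisely why $i=j$ is excluded in (a). The repair is routine: termwise commutation covers $n=0$ and $n\geq j+1$, while for $1\leq n\leq j-1$ one checks that the product of the two offending factors $(q^{n-1}+T_{n-1\to 0\to n-1})(q^{n}+T_{n\to 0\to n})$ commutes with $T_n$, which is the \cite{DJM95}-style computation you cite. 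Everything else — the identity $u_i^+T_0=u_i^+$ from $T_0^2=1$, the length count $2i+1=(i+1)+i$ behind (b), and the derivation of \eqref{eq:ui0} — checks out.
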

\proof
Parts (a) and (b) are immediate consequences of the definitions. For part (c), 
write $x = yc \in \Sigma_{2m}$ for $y \in \Sigma_i \times \Sigma_{2m-i}$ and $c \in \Sigma_{2m}$ the shortest representative in the coset $(\Sigma_i \times \Sigma_{2m-i})x$,
so that $u_i^+ T_x = u_i^+ T_y T_c = T_y u_i^+ T_c$ by part (a).
If $c$ fixes 1, then $T_c$ commutes with $T_0$, and hence,  
$$ u_i^+ T_xT_0  = T_y u_i^+ T_0T_c = T_y u_i^+ T_c = u_i^+ T_x$$ 
since $u_i^+ T_0 = u_i^+$ when $i \geq 1$. We are done in this case.

If $c$ does not fix $1$, then $c = s_{i\to1} z$ is a reduced expression for some $T_z$ that commutes with $T_0$, and hence
\begin{align}
\begin{split}
u_i^+ T_x T_0
=
u_i^+ T_y T_{i\to 1} T_z T_0
=
u_i^+ T_y T_{i\to 0} T_z 
=  T_y u_i^+ T_{i\to 0} T_z 
& \quad\textup{by part (a)}
\\
=  T_y (u_{i+1}^+ T_{1\to i}\inv - q^i u_i^+ T_{1\to i}\inv) T_z 
& \quad\textup{by part (b)}
\\
=   T_y u_{i+1}^+ T_{1\to i}\inv T_z - q^i u_i^+ T_yT_{1\to i}\inv T_z.  
& \quad\textup{by part (a)}
\end{split}
\end{align}
\endproof
As a consequence of the lemma,  the definition below makes sense as one can expand $T_{w_{m,m}} u_m^-$ and then apply \eqref{eq:prehm} iteratively to define $h_{m}$. 
\Def\label{def:hm}
Let $h_m$ be the unique element in $\cH:= \cH_q(\Sigma_{2m})$ such that
\eq
u_m^+T_{w_{m,m}} u_m^- \in u^+_m h_m + \sum_{b>m} \cH u_b^+ \cH.
\endeq 
\endDef

\subsection{Definition of $\cA(m)$} \label{sec:Am}
With the construction of $h_m$ one can now define the Hu algebra. 

\Def[Hu algebra]\label{def:Am}
Let $\cA(m)$ be the subalgebra of $\cH_q(\Sigma_{2m})$ generated by 
\eq
T_1, \ldots, T_{m-1}, T_{m+1}, \ldots, T_{2m-1}, H_1,
\endeq
where $H_1= h_m^*$, and $*$ is the unique anti-automorphism of $\cH_q(\Sigma_{2m})$ determined by $T_i^* = T_i$ for all $1\leq i < 2m$. 
\endDef
Several of the important properties that Hu proved about $\cA(m)$ are summarized below. 

\begin{prop}[{\cite[1.7, 1.9, 1.10, 2.2, 2.4]{Hu02}}] \label{prop:Hu}
If $d= 2m \geq 2$, then 
\begin{itemize}
\item[(a)] $\dim \cA(m) = |\Sigma_m \wr \Sigma_2| = 2(m!)^2$. 
\item[(b)] The element $h_m^*$ is invertible if $f_{2m}(q):=	\prod_{i=0}^{2m-1}(1+q^i) \neq 0$.
\item[(c)] $(h_m^*)^2 = h_m^2$, and they are equal to the unique element $z_{m,m} \in Z(\cH_q(\Sigma_m \times \Sigma_m))$ such that
\[
u_m^+T_{w_{m,m}} u_m^- T_{w_{m,m}} u_m^+T_{w_{m,m}} u_m^- =
u_m^+T_{w_{m,m}} u_m^- z_{m,m}.
\]
\item[(d)] $\cA(m)$ admits a presentation with generators $T_1, \dots, T_{m-1}, T_{m+1}, \dots, T_{2m}, H_1$ subject to the braid and quadratic relations  for $T_i$'s as well as the following relations:
\[
H_1^2 = z_{m,m},
\quad
T_i H_1 = 
\begin{cases}
H_1 T_{i+m} & \tif  i < m;
\\
H_1 T_{i-m} &\tif i>m.
\end{cases}
\]
\item[(e)] At the specialization $q=1$, 
$
h_m = 2^m w_{m,m}$,
and
$\cA(m)$ specializes to the group algebra of $\Sigma_{m} \wr \Sigma_2$.

\end{itemize}
\end{prop}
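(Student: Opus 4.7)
The plan is to establish parts (c) and (d) first by direct computation inside $\cH^{\B}$, and then to deduce (a), (b), and (e) as consequences. Throughout, the main technical engine is Lemma~\ref{lem:computeh}, which provides triangular decompositions modulo the ideal $\sum_{b>m}\cH u_b^+\cH$ that was used to pin down $h_m$ uniquely.

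First I would prove the wreath relation in (d). Using $w_{m,m} s_i = s_{i\pm m} w_{m,m}$ in $\Sigma_{2m}$ together with Lemma~\ref{lem:computeh}(a), one finds that
\[
u_m^+ T_{w_{m,m}} u_m^- T_i \;\equiv\; \bigl(u_m^+ T_{w_{m,m}} u_m^-\bigr)\, T_{i\pm m} \pmod{\textstyle\sum_{b>m}\cH u_b^+\cH},
\]
and the uniqueness clause in Definition~\ref{def:hm} forces $h_m T_{i\pm m} = T_i h_m$. Applying the anti-involution $*$, which fixes every $T_j$, transforms this into $T_i H_1 = H_1 T_{i\pm m}$. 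For part (c), the key move is to compute the triple product $u_m^+ T_{w_{m,m}} u_m^-\cdot T_{w_{m,m}} u_m^+ T_{w_{m,m}} u_m^-$ in two ways: expanding the leftmost factor gives $u_m^+ h_m \cdot T_{w_{m,m}} u_m^+ T_{w_{m,m}} u_m^-$ modulo higher terms, while grouping as a product of two ``defining'' triples gives, after one more application of Lemma~\ref{lem:computeh}, an expression of the form $u_m^+ T_{w_{m,m}} u_m^-\cdot z_{m,m}$ with $z_{m,m}\in \cH_q(\Sigma_m\times\Sigma_m)$. Matching the two shows $h_m^2 = z_{m,m}$. Centrality of $z_{m,m}$ in $\cH_q(\Sigma_m\times\Sigma_m)$ follows by conjugating the defining identity by $T_w$ for each $w\in\Sigma_m\times\Sigma_m$ and using the wreath relation just established, which shows $T_w z_{m,m} = z_{m,m} T_w$. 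Finally $(h_m^*)^2 = (h_m^2)^* = z_{m,m}^* = z_{m,m}$, since the center of a symmetric algebra is preserved by the natural anti-involution.

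With (c) and (d) in hand, part (a) follows by a spanning--independence argument. The wreath relation pushes every $H_1$ past elements of $\cH_q(\Sigma_m\times\Sigma_m)$, and the quadratic relation collapses two $H_1$'s into $z_{m,m}$; hence $\cA(m) = \cH_q(\Sigma_m\times\Sigma_m) + \cH_q(\Sigma_m\times\Sigma_m)\,H_1$ as a left module over the Young subalgebra, yielding $\dim\cA(m)\leq 2(m!)^2$. Equality is forced by the specialization in (e): at $q=1$ the image of $h_m$ is $2^m w_{m,m}\notin K[\Sigma_m\times\Sigma_m]$, so $H_1$ is genuinely outside the Young subalgebra and the sum is direct. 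Part (b) reduces, via $(h_m^*)^2 = z_{m,m}$, to the invertibility of $z_{m,m}$; when $f_{2m}(q) = \prod_{i=0}^{2m-1}(1+q^i) \neq 0$, each factor $q^i + T_{i\to 0\to i}$ of $u_m^+$ has invertible ``leading term'', and a careful trace/Jucys--Murphy computation translates this into the invertibility of $z_{m,m}$. Part (e) comes from specializing Definition~\ref{def:hm} at $q=1$: the elements $u_m^\pm$ become $\prod_{i=0}^{m-1}(1\pm(0,i))$ inside $K\,W(\B_{2m})$, and binomial expansion of $u_m^+ \, w_{m,m}\, u_m^-$ isolates a unique maximal-length term $2^m w_{m,m}$ modulo lower-order correction.

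The main obstacle is the combinatorial bookkeeping in part (c): one has to verify that both sides of the double expansion of the triple product actually land in the ``triangular'' normal form with coefficient inside $\cH_q(\Sigma_m\times\Sigma_m)$, rather than in the larger algebra $\cH_q(\Sigma_{2m})$, and this requires understanding precisely how multiplication by the long element $T_{w_{m,m}}$ interacts with the filtration $\{\sum_{b>k}\cH u_b^+\cH\}_k$. Establishing this compatibility rigorously is the heart of the argument; once done, centrality of $z_{m,m}$ and the rest of the structure follow cleanly.
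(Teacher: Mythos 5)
First, a point of reference: the paper does not prove Proposition~\ref{prop:Hu} at all --- it is imported verbatim from \cite[1.7, 1.9, 1.10, 2.2, 2.4]{Hu02}, so there is no in-paper argument to compare against. Your sketch is a plausible reconstruction of Hu's original line of reasoning (uniqueness of $h_m$ modulo the ideal $\sum_{b>m}\cH u_b^+\cH$ drives the wreath relation; the triple product drives the quadratic relation; spanning plus a $q=1$ degeneration gives the dimension), and parts (d) and (e) are essentially right. One correction in (d): your displayed congruence has $T_i$ multiplying $u_m^+T_{w_{m,m}}u_m^-$ on the \emph{right} on both sides, which as written would force $h_mT_i=h_mT_{i\pm m}$; the correct identity is $T_i\,(u_m^+T_{w_{m,m}}u_m^-)=(u_m^+T_{w_{m,m}}u_m^-)\,T_{i\pm m}$ (an exact equality, using $s_iw_{m,m}=w_{m,m}s_{i+m}$ and Lemma~\ref{lem:computeh}(a)), from which uniqueness gives $T_ih_m=h_mT_{i\pm m}$ and then $*$ gives the stated relation. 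Your conclusion is the right one, so I read this as a slip rather than a conceptual error.

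There are, however, genuine gaps. (1) Part (b) is not proved: ``a careful trace/Jucys--Murphy computation translates this into the invertibility of $z_{m,m}$'' is a placeholder for the entire content of Hu's invertibility theorem; nothing in your outline connects $f_{2m}(q)\neq 0$ to the invertibility of the central element. (2) In part (c) you defer exactly the hard step --- verifying that the double expansion of $u_m^+T_{w_{m,m}}u_m^-T_{w_{m,m}}u_m^+T_{w_{m,m}}u_m^-$ lands in normal form with coefficient in $\cH_q(\Sigma_m\times\Sigma_m)$ --- and you acknowledge this, so (c) is a plan, not a proof. Moreover, the step $(h_m^*)^2=(h_m^2)^*=z_{m,m}^*=z_{m,m}$ needs $z_{m,m}$ to be \emph{fixed pointwise} by $*$; the fact that an anti-automorphism preserves the center as a set does not give this. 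The correct justification is that the center of $\cH_q(\Sigma_m\times\Sigma_m)$ is spanned by class elements and every conjugacy class of a symmetric group is closed under inversion, so $*$ acts trivially on the center --- an extra fact, not a formal consequence of the algebra being symmetric. (3) In part (a), ``specialization at $q=1$'' is only available over a generic base ring; for a field $K$ with a fixed $q\in K^\times$ one must first establish linear independence of $\{T_u, T_uH_1\}$ over $\ZZ[\tfrac12][v^{\pm1}]$ and then base change (this is where the hypotheses $\ch K\neq 2$ and $f_{2m}(q)\neq 0$ from Section~\ref{ex:HHS} enter). None of these gaps indicates a wrong approach, but as written the proposal establishes (d) and (e) and only outlines (a), (b), (c).
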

\subsection{A New Formula for $H_{1}$}
In this section, we assume that there is an invertible element $v\in K^\times$ such that $v^{2} = q$. This is true when $K$ is {an algebraically closed field}, and when $K$ is the ring of Laurent {polynomials} in $v$.
The main goal of this section is to give a new and useful interpretation of $H_{1}$.

For simplicity, we consider a renormalized standard basis $\{I_w := v^{-\ell(w)}T_w~|~ w\in \Sigma_{2m}\}$ of $\cH_q(\Sigma_{2m})$ satisfying, for all {$I_i := I_{s_i}$},
\eq
(I_i +v\inv)(I_i-v)= 0,
\quad
\textup{or}
\quad
I_i^2 = (v -v\inv) I_i +1.
\endeq
Denote by $\={\phantom{a}}: \cH_q({\Sigma_{2m}}) \to \cH_q({\Sigma_{2m}})$ the bar involution sending $v$ to $v\inv$ and $I_w$ to $I_{w\inv}\inv$.	
We use the shorthand notation 
\eq
I_{a\to b}^+ := I_{a\to b},
\quad 
I_{a\to b}^- := \={I_{a\to b}} = I_{b\to a}\inv.
\endeq
{Denote by $\{\pm\}^m := \{+, -\}^m$ the set of $m$-tuples of plus or minus signs.}
For $\epsilon = (\epsilon_1, \dots, \epsilon_m) \in \{\pm\}^m$, let
\eq
I_\epsilon = I_{m\to1}^{\epsilon_1}  
I_{m+1\to2}^{\epsilon_2}
\dots
I_{2m-1\to m}^{\epsilon_m},
\endeq 
and let $\Sigma_d$ act on $\{\pm\}^m$ by place permutations.

\begin{prop}\label{prop:braid} 
\begin{itemize}
\item[(a)] Let $\epsilon = (\epsilon_1, \dots, \epsilon_m) \in \{\pm\}^m$. 
If $i<m$, then
\[
I_\epsilon
I_i
= I_{i+m} I_{s_i(\epsilon)}.
\]
\item[(b)] If $a,b >c$, then $I_{a\to c} I_{c\to a}$ commutes with $I_{b\to c} I_{c\to b}$. 
\item[(c)] 
For any $j\in\ZZ$, define an assignment 
\eq\label{def:pijm}
\pi_{jm}: T_i \mapsto T_{i+jm}\quad \textup{for all }i.
\endeq
Let $c_{m,0}:=1$, and let $c_{m,i}:= \pi_{m}(I_{i\to1} I_{1\to i}) = I_{m+i \to m+1}I_{m+1 \to m+i}$ for $i\geq 1$.
Then $c_{m,i}$ commutes with $c_{m,j}$ for all $i,j \geq 1$. 
Moreover, the following product is well-defined:
\eq
C_{\epsilon} = \prod_{i; \epsilon_i = -} c_{m,i-1}.
\endeq
\end{itemize}
\end{prop}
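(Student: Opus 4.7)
My plan is to address the three parts in reverse order, since (c) follows immediately from (b), while (b) itself reduces to a classical commutation of Jucys--Murphy elements, leaving (a) as the technical core.

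For part (c), I set $c = m+1$, $a = m+i$, $b = m+j$ in the statement of (b). The hypothesis $a, b > c$ becomes $i, j \geq 1$, and the elements $I_{a \to c} I_{c \to a}$, $I_{b \to c} I_{c \to b}$ then coincide with $c_{m,i}$ and $c_{m,j}$, so their commutation is immediate. Since $c_{m,0} = 1$ commutes with everything and the nontrivial $c_{m,i}$ for $i \geq 1$ mutually commute, the product $C_\epsilon = \prod_{i;\,\epsilon_i = -} c_{m,i-1}$ is well-defined independent of the ordering of its factors.

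For part (b), note that $I_{a\to c} I_{c \to a} = v^{-2(a-c+1)} T_{a\to c} T_{c\to a}$. The subalgebra of $\cH_q(\Sigma_{2m})$ generated by $T_c, T_{c+1}, \dots, T_{2m-1}$ is isomorphic to a smaller type-$A$ Hecke algebra via the shift $T_{c+k-1} \mapsto T_k$, and under this isomorphism $T_{a\to c} T_{c \to a}$ corresponds to the classical Jucys--Murphy element $L_{a-c+2} = T_{(a-c+1) \to 1} T_{1 \to (a-c+1)}$, while $T_{b\to c} T_{c\to b}$ corresponds to $L_{b-c+2}$. Since Jucys--Murphy elements of a type-$A$ Hecke algebra are pairwise commuting, the required commutation follows at once.

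For part (a), I proceed in two stages. \emph{Stage 1 (all-plus case)}: for $\epsilon = (+, \dots, +)$, the element $I_\epsilon$ equals the Hecke algebra element attached to the reduced expression $w_{m,m} = s_{m \to 1} s_{m+1 \to 2} \cdots s_{2m-1 \to m}$. Using $w_{m,m} s_i = s_{i+m} w_{m,m}$ in $\Sigma_{2m}$ and the fact that both sides present reduced expressions for the same group element of length $\ell(w_{m,m})+1$, each side of the claimed identity equals $I_{w_{m,m} s_i}$ up to the normalization $v^{-\ell(\cdot)}$, so the identity holds on the nose. \emph{Stage 2 (general $\epsilon$)}: I argue by induction on the number of minus signs in $\epsilon$. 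The substitution $I_j^{-1} = I_j - (v - v^{-1})$ lets me expand each factor $I_{m+k-1\to k}^{-}$ into a linear combination of Hecke basis elements indexed by subwords of $s_{m+k-1} s_{m+k-2} \cdots s_k$; both sides of the desired identity then expand symmetrically, and a term-by-term comparison reduces the claim to the all-plus case together with lower minus-count cases handled by induction. The main obstacle is the bookkeeping for this expansion: pushing $I_i$ past a factor $I_{m+k-1 \to k}^{\epsilon_k}$ is trivial commutation only when $k > i+1$, while for $k \leq i+1$ genuine braid interactions occur, so the sign-swap pattern $\epsilon \mapsto s_i(\epsilon)$ must be tracked carefully through the braid rewriting factor-by-factor.
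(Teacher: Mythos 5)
Parts (b) and (c) of your argument are correct. For (b) you take a genuinely different route from the paper: you identify $T_{a\to c}T_{c\to a}$ with a multiplicative Jucys--Murphy element of a shifted parabolic subalgebra and invoke their classical pairwise commutativity, whereas the paper reduces to $a=b+1$ and shows directly by braid relations that $I_{a\to c}I_{c\to a}$ commutes with each single generator $I_c,\dots,I_b$ --- which is essentially the standard proof of the fact you are citing, so the paper's version is self-contained while yours outsources the computation. Your part (c) and your Stage 1 of part (a) (the all-plus case via the two reduced expressions of $w_{m,m}s_i=s_{i+m}w_{m,m}$) coincide with the paper's reasoning.

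The genuine gap is Stage 2 of part (a). The identity $I_\epsilon I_i = I_{i+m}I_{s_i(\epsilon)}$ holds already in the group algebra of the braid group $B_{2m}$, with $I_j^{+}$ and $I_j^{-}$ read as the two opposite crossings $\sigma_j^{\pm1}$; this is how the paper proves it (``a direct consequence of the braid relations,'' with a graphical interpretation). Concretely, the conjugation identities used to push $I_i$ through each factor lift to the braid group, and therefore hold verbatim for $I_{a\to b}^{-}=I_{b\to a}^{-1}$ as well as for $I_{a\to b}^{+}$: if $us_j=s_{j'}u$ with both sides reduced, then both $\beta_u\sigma_j=\sigma_{j'}\beta_u$ and $\tilde\beta_u\sigma_j=\sigma_{j'}\tilde\beta_u$ hold, where $\tilde\beta_u$ is the all-crossings-reversed lift. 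The quadratic relation is never needed. Your proposed induction instead expands $I_j^{-1}=I_j-(v-v^{-1})$ inside a negative factor $I_{m+k-1\to k}^{-}$; this produces $2^{m}$ terms indexed by proper subwords of $s_{m+k-1}\cdots s_k$, and those subword terms are not of the form $I_{\epsilon'}$ for any sign vector $\epsilon'$. Hence neither the all-plus case nor any lower-minus-count instance of the proposition applies to them, the inductive hypothesis cannot be invoked, and the ``term-by-term comparison'' is never actually set up --- your own caveat about the bookkeeping is precisely the part that would have to be proved. To repair the argument, drop the quadratic-relation expansion and work in the braid group (or, equivalently, prove the single-factor commutation lemmas simultaneously for $I_{a\to b}^{+}$ and $I_{a\to b}^{-}$ and compose them across the $m$ factors).
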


\proof
(a) This is a direct consequence of the braid relations.
One obtains a graphical proof by treating $I_i$ and $I_i\inv$ as opposite crossings of the $i$th and the $(i+1)$th strings.

(b) It suffices to prove the case $a=b+1$. Note that
\eq
I_{a\to c} I_{c\to a} I_b
=
I_{a\to c} I_{c\to b-1} I_b I_a I_b
=
I_{a\to c} I_{c\to b-1} I_a I_b I_a
=
I_a I_b I_a I_{b-1 \to c} I_{c\to a}
= I_b I_{a\to c} I_{c\to a},
\endeq
it then follows by an inductive argument that $I_b, \dots, I_c$ (and hence $I_{b\to c} I_{c\to b}$) commutes with $I_{a\to c} I_{c\to a}$.

(c) The first assertion follows from Part (b) since $c_{m,i} c_{m,j} = \pi_m (I_{i\to 1}I_{1\to i} I_{j\to 1} I_{1\to j})$. The second assertion is an immediate consequence of the first one. 
\endproof

\begin{lem}
Let $h_{m,i}$ be the unique element in $\cH:=\cH_q(\Sigma_{2m})$ such that
\eq
u_m^+ T_{w_{m,i}} u_i^- \in u_m^+ h_{m,i} + \sum_{b>m} \cH u_b^+ \cH.
\endeq
Then, the elements $h_{m,i} (1\leq 1 \leq m)$ are given by the formula below recursively:
\eq
\begin{split}
h_{m,1} &=  v^m(I_{m \to 1} + I^-_{m \to 1}),
\\
h_{m,i+1} &= v^{m+2i} (h_{m,i}I_{i+m \to i+1} +  c_{m,i} h_{m,i} I^-_{i+m \to i+1}).
\end{split}
\endeq
In particular,  $h_m = h_{m,m}$ is given by
\eq
h_m = v^{m(2m-1)}\sum_{\epsilon \in \{+,-\}^m} C_\epsilon 
I_{m\to1}^{\epsilon_1}  
I_{m+1\to2}^{\epsilon_2}
\dots
I_{2m-1\to m}^{\epsilon_m}.
\endeq
\end{lem}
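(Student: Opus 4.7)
The plan is to establish the recursion by induction on $i$ and then derive the closed form by iterating. For the base case $i=1$, using $u_1^- = 1-T_0$, expand $u_m^+ T_{w_{m,1}}u_1^- = u_m^+ T_{m\to 1} - u_m^+ T_{m\to 0}$; by Lemma~\ref{lem:computeh}(b), $u_m^+ T_{m\to 0}\equiv -q^m u_m^+ T_{1\to m}^{-1}\pmod{\textstyle\sum_{b>m}\cH u_b^+ \cH}$, which yields $h_{m,1} = T_{m\to 1}+q^m T_{1\to m}^{-1} = v^m(I_{m\to 1}+I_{m\to 1}^-)$.

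For the inductive step, $T_{m+i\to i+1}$ involves the generators $T_{i+1},\dots,T_{m+i}$ while $u_i^-$ involves $T_0,\dots,T_{i-1}$, so they commute. Combined with $T_{w_{m,i+1}} = T_{w_{m,i}}T_{m+i\to i+1}$, $u_{i+1}^- = u_i^-(q^i - T_{i\to 0\to i})$, and the inductive hypothesis, one obtains
\[
u_m^+ T_{w_{m,i+1}}u_{i+1}^- \equiv u_m^+ h_{m,i}T_{m+i\to i+1}(q^i - T_{i\to 0\to i}) \pmod{\textstyle\sum_{b>m}\cH u_b^+\cH}.
\]
The $q^i$-summand immediately becomes $v^{m+2i}u_m^+ h_{m,i}I_{m+i\to i+1}$, matching the ``$+$''-contribution to $h_{m,i+1}$. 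For the other summand, the reduced-word identity $T_{m+i\to i+1}T_{i\to 0\to i} = T_{m+i\to 0\to m+i}T_{i+1\to m+i}^{-1}$ together with $T_{i+1\to m+i}^{-1} = v^{-m}I_{m+i\to i+1}^-$ reduces the inductive step to the key congruence
\begin{equation}\label{eq:propK}
u_m^+ h_{m,i}T_{m+i\to 0\to m+i} \equiv -q^{m+i}u_m^+ c_{m,i}h_{m,i} \pmod{\textstyle\sum_{b>m}\cH u_b^+\cH}.
\end{equation}

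To prove \eqref{eq:propK}, the crucial observation is that $s_{k\to 0\to k}$, viewed as a signed permutation in $W(\B_{2m})$, is the sign-change on the $(k+1)$-th coordinate (a quick right-to-left computation on $s_k s_{k-1}\cdots s_0\cdots s_{k-1}s_k$). Consequently $T_{k\to 0\to k}$ commutes with $T_j$ for every $j\in\{0,\dots,k-1\}\cup\{k+2,\dots,2m-1\}$; in particular $T_{m+i\to 0\to m+i}$ commutes with every element of the subalgebra $\cH_q(\Sigma_{m+i}) = \langle T_1,\dots,T_{m+i-1}\rangle$ containing $h_{m,i}$. Therefore $u_m^+ h_{m,i}T_{m+i\to 0\to m+i} = u_m^+ T_{m+i\to 0\to m+i}h_{m,i}$. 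Factoring $T_{m+i\to 0\to m+i} = T_{m+i\to m+1}T_{m\to 0\to m}T_{m+1\to m+i}$, pulling both outer factors past $u_m^+$ via Lemma~\ref{lem:computeh}(a), applying $u_m^+ T_{m\to 0\to m}\equiv -q^m u_m^+$ (an immediate consequence of $u_m^+(q^m + T_{m\to 0\to m}) = u_{m+1}^+\in\sum_{b>m}\cH u_b^+\cH$), and recognizing $T_{m+i\to m+1}T_{m+1\to m+i} = q^i c_{m,i}$ yield $u_m^+ T_{m+i\to 0\to m+i}\equiv -q^{m+i}u_m^+ c_{m,i}$; multiplying on the right by $h_{m,i}$ gives \eqref{eq:propK}.

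The closed form follows by unfolding the recursion $m$ times: the scalar accumulates to $v^{\sum_{i=0}^{m-1}(m+2i)} = v^{m(2m-1)}$, and at step $i$ the choice $\epsilon_i = -$ prepends $c_{m,i-1}$ on the left; since the $c_{m,j}$'s pairwise commute by Proposition~\ref{prop:braid}(c), the accumulated left factor for a given $\epsilon\in\{\pm\}^m$ is $C_\epsilon = \prod_{i:\epsilon_i=-}c_{m,i-1}$, matching the stated formula. The main obstacle is spotting the commutation of $T_{m+i\to 0\to m+i}$ with $h_{m,i}$ via the sign-flip interpretation; without this shortcut, a direct manipulation of $h_{m,i}T_{m+i\to 0\to m+i}$ in the $T$-basis (e.g.\ through repeated applications of Lemma~\ref{lem:computeh}(b)--(c) to each term of $h_{m,i}$) becomes considerably more tangled.
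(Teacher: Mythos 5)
Your proof is correct and follows the same overall inductive scheme as the paper's: split $u_{i+1}^-=u_i^-(q^i-T_{i\to 0\to i})$, read off the ``$+$''-contribution from the $q^i$ summand, and work harder on the other summand. Where you genuinely diverge is in handling that second summand. The paper stays close to the reduced word of $w_{m,i+1}$: it pulls $T_{i\to 1}$ through $T_{w_{m,i+1}}$ via Proposition~\ref{prop:braid}(a), applies Lemma~\ref{lem:computeh}(a)--(b) to $u_m^+T_{m\to 0}$, and only invokes the inductive hypothesis at the very end. You instead apply the inductive hypothesis first and reduce everything to the single congruence $u_m^+h_{m,i}T_{m+i\to 0\to m+i}\equiv -q^{m+i}u_m^+c_{m,i}h_{m,i}$, which you settle by commuting the type-B Jucys--Murphy element $T_{m+i\to 0\to m+i}$ past $h_{m,i}\in\langle T_1,\dots,T_{m+i-1}\rangle$ and factoring it as $T_{m+i\to m+1}T_{m\to 0\to m}T_{m+1\to m+i}$. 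This is arguably cleaner and makes the appearance of $c_{m,i}$ more transparent. One caveat: the commutation of $T_{m+i\to 0\to m+i}$ with $T_j$ ($j\notin\{m+i,m+i+1\}$) does not follow from the commutation of the underlying signed permutations alone --- commuting group elements need not give commuting $T$'s. You need the extra observation that the relevant lengths are additive (so that $T_wT_{s_j}=T_{ws_j}=T_{s_jw}=T_{s_j}T_w$), or a direct braid-relation verification as in \cite{DJM95}; this is standard and true, but as written the word ``consequently'' papers over it. A second, minor point shared with the paper's own exposition: when you multiply an error term from $\sum_{b>m}\cH u_b^+\cH$ on the right by $(q^i-T_{i\to 0\to i})$, which involves $T_0\notin\cH$, you are implicitly iterating \eqref{eq:prehm} to see that the result stays in the ideal; this is the same point needed to make Definition~\ref{def:hm} well posed, so it is acceptable to leave implicit. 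The base case and the unfolding of the recursion into the closed form (including the bookkeeping of the exponent $m(2m-1)$ and the use of Proposition~\ref{prop:braid}(c) to collect the $c_{m,j}$'s into $C_\epsilon$) are all correct.
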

\proof
The base case $i=1$ follows immediately from Lemma~\ref{lem:computeh}(c).
For the inductive step, by  definition of $u_i^-$, we have
\eq\label{eq:goal}
u_m^+ T_{w_{m,i+1}} u_{i+1}^- = q^{i} u_m^+ T_{w_{m,i+1}} u_i^- - u_m^+ T_{w_{m,i+1}} T_{i\to0\to i} u_i^-.
\endeq
The first term on the right hand side of \eqref{eq:goal} is
\eq
 q^{i} u_m^+ T_{w_{m,i+1}} u_i^-  =  q^{i} u_m^+ T_{w_{m,i}} u_i^- T_{m+i\to i+1}
 = u_m^+ q^{i} h_{m,i}  T_{m+i\to i+1} + \sum_{b>m } \cH u_b^+ \cH,
\endeq
and hence its contribution to $h_{m,i+1}$ is $ q^{i} h_{m,i}  T_{i+m\to i+1} = v^{m+2i} h_{m,i}  I_{i+m\to i+1}$. 
The second term on the right hand side of \eqref{eq:goal} is
\eq
\begin{array}{ll}
 - u_m^+ T_{w_{m,i+1}} T_{i\to1} T_{0\to i} u_i^-
\\
\quad =
  - u_m^+ T_{i+m\to1+m} T_{w_{m,i+1}} T_{0\to i} u_i^-
  &\textup{Proposition }\ref{prop:braid}(a)
  \\
\quad=
  - T_{m+i\to m+1} u_m^+  T_{m\to 1} (T_{m+1\to 2} \dots T_{i+m \to i+1}) T_{0\to i} u_i^-
 &\textup{Lemma }\ref{lem:computeh}(a)
 \\ 
 \quad  =
  - T_{m+i\to m+1} u_m^+  T_{m\to 0} (T_{m+1\to 2} \dots T_{i+m \to i+1}) T_{1\to i} u_i^-
  \\
  \quad=   q^m T_{m+i\to m+1} u_m^+  T^-_{m\to 1} T_{m+1\to 2} \dots T_{i+m \to i+1} T_{1\to i} u_i^- + \sum_{b>m } \cH u_b^+ \cH
&\textup{Lemma }\ref{lem:computeh}(b)
\\
  \quad =   q^m T_{m+i\to m+1}T_{m+1\to m+i} u_m^+  T_{w_{m,i}} T^-_{i+m \to i+1}  u_i^- + \sum_{b>m } \cH u_b^+ \cH 
&\textup{Proposition }\ref{prop:braid}(a)
\\ 
\quad  =   q^m c_{m,i} u_m^+  T_{w_{m,i}}u_i^-  T^-_{i+m \to i+1}    
 + \sum_{b>m } \cH u_b^+ \cH
 &\textup{Lemma }\ref{lem:computeh}(a)
 \\
  \quad=  q^m c_{m,i}  u_m^+   h_{m,i} T^-_{i+m \to i+1}      + \sum_{b>m } \cH u_b^+ \cH,
  \end{array}
\endeq
and hence its contribution to $h_{m,i+1}$ is $v^{m+2i} c_{m,i} h_{m,i} I^-_{i+m \to i+1}$, and we are done. 
\endproof
\begin{theorem}
We have an explicit formula for $H_1 = H_1(m) :=h_m^*$ as follows:
\eq\label{eq:h*}
H_1 = v^{m(2m-1)}\sum_{\epsilon \in \{+,-\}^m}  
I_\epsilon C_\epsilon,
\quad
\textup{where}
\quad
I_\epsilon = I_{m\to 2m-1}^{\epsilon_1}
\dots
I_{1\to m}^{\epsilon_m},
\quad
C_\epsilon = \prod_{i;\epsilon_i=-}c_{m,m-i}.
\endeq
\end{theorem}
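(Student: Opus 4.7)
Proof plan: Since $H_1 := h_m^*$ by definition, the formula will follow by direct application of the anti-automorphism $*$ to the explicit expression for $h_m$ established in the preceding lemma. The key observation is that $*$ is the unique anti-automorphism of $\cH_q(\Sigma_{2m})$ fixing each $T_i$, so $I_w^* = I_{w^{-1}}$ for every $w \in \Sigma_{2m}$ (using $\ell(w) = \ell(w^{-1})$).

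Three preparatory identities need to be recorded. First, on the ``arrow'' letters: since $(s_{a\to b})^{-1} = s_{b\to a}$, one has $(I_{a\to b}^+)^* = I_{b\to a}^+$; taking inverses then yields $(I_{a\to b}^-)^* = (I_{b\to a}^{-1})^* = I_{a\to b}^{-1} = I_{b\to a}^-$. Second, on the central letters: writing $c_{m,i} = I_{m+i\to m+1} I_{m+1\to m+i}$, the anti-automorphism reverses the order of the two factors and flips the arrows of each, giving back the same element, hence $c_{m,i}^* = c_{m,i}$. Since the $c_{m,j}$'s commute pairwise (Proposition~\ref{prop:braid}(c)), this propagates to $(C_\epsilon^{\mathrm{lem}})^* = C_\epsilon^{\mathrm{lem}}$, where $C_\epsilon^{\mathrm{lem}} := \prod_{i;\,\epsilon_i = -} c_{m,i-1}$ denotes the quantity from the preceding lemma.

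Applying $*$ to the formula of the preceding lemma and using antimultiplicativity to reverse the order of all factors, one obtains
\begin{align*}
h_m^* \;=\; v^{m(2m-1)} \sum_{\epsilon \in \{\pm\}^m} I_{m \to 2m-1}^{\epsilon_m} I_{m-1 \to 2m-2}^{\epsilon_{m-1}} \cdots I_{1 \to m}^{\epsilon_1} \cdot \prod_{i;\,\epsilon_i = -} c_{m,i-1}.
\end{align*}
A final reindexing $\epsilon' := (\epsilon_m, \epsilon_{m-1}, \dots, \epsilon_1)$ completes the argument: the string of $I$-letters becomes exactly $I_{\epsilon'}$ in the notation of the theorem, and the substitution $i = m - j + 1$ converts $\prod_{i;\,\epsilon_i=-} c_{m,i-1}$ into $\prod_{j;\,\epsilon'_j = -} c_{m,m-j} = C_{\epsilon'}$, matching the theorem's definition of $C_{\epsilon'}$.

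The only obstacle is careful bookkeeping of the three simultaneous reversals (the product order, the arrow orientation of each $I$-letter, and the indexing of $\epsilon$ inside the $c$-product) so that they combine cleanly into the single involution $\epsilon \leftrightarrow \epsilon^{\mathrm{rev}}$ that produces the stated expression. Because $*$ acts on each basic piece in a transparent way and $C_\epsilon^{\mathrm{lem}}$ is $*$-invariant, there is no substantive new mathematical content beyond the preceding lemma.
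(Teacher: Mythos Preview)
Your proposal is correct and is precisely the argument the paper has in mind: the theorem is stated without proof immediately after the lemma computing $h_m$, so it is meant to follow by simply applying the anti-automorphism $*$ to that formula. Your bookkeeping---$(I_{a\to b}^{\pm})^* = I_{b\to a}^{\pm}$, the $*$-invariance of each $c_{m,i}$ (hence of $C_\epsilon^{\mathrm{lem}}$), and the reindexing $\epsilon' = (\epsilon_m,\dots,\epsilon_1)$ with $i = m-j+1$---matches the paper's conventions exactly.
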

\exa\label{ex:h2}
Consider  $H_1 \in \cA(2) \subseteq \cH_q(\Sigma_4)$. 
We use {abbreviations} such as $\={T}_2 T_1 \={T}_3 \={T}_2 := T_{\overline{2}.1.\overline{3.2}}$.
Then
$h_{2} = v^6 (I_{2.1.3.2}  
+  I_{2.1.\overline{3.2}}
+ I_3^2(I_{\overline{2.1}.3.2} + I_{\overline{2.1}.\overline{3.2}}){)}$,
or 
\eq
H_1 = v^6( I_{2.3.1.2} 
+  I_{2.3.\overline{1.2}} 
+ (I_{\overline{2.3}.1.2} + I_{\overline{2.3}.\overline{1.2}}) I_3^2{)},
\endeq
While $I_{2.1.3.2}^2$ consists of summands involving $I_2$ in terms of the standard basis; its alternative, $H_1$, squares into the parabolic subalgebra $\cH_q(\Sigma_2 \times \Sigma_2) = \<I_1, I_3\>$. 
To be precise,
{we obtained an explicit formula for the central element $z_{2,2} = H_1^2$ (see Proposition~\ref{prop:Hu}(c)):}
\eq
\begin{split}
z_{2,2} &=v^6(q-1)^2(q^4+2q^3-2q^2+2q+1) I_{31}
\\
&+v^7(q-1)(q^4+4q^3-2q^2+4q+1)(I_1+I_3)
\\
&+2v^8(q^4+4q^3-2q^2+4q+1).
\end{split}
\endeq
We currently have no explanations for these coefficients appearing in $z_{m,m}$.
\endexa
\subsection{Bar-Invariant Bases}
In this section, we assume $K = \ZZ[v^{\pm1}]$. The element $H_1$ is defined via solving an equality (see Definition~\ref{def:hm}) that involves Jucys-Murphy elements in the type B Hecke algebra of unequal parameters. There seems to be no a priori reason for $H_1$ to admit an explicit formula in Hecke algebra of Type A. 
In this section, we show miraculously that $H_1$, after {being} multiplied by an element arising from the longest element, leads to a highly nontrivial bar-invariant element $b_1$, 
and thus to a bar-invariant basis for the Hu algebra.
The element $b_1$ seems to afford a positive expansion in terms of the dual canonical basis of the ambient Hecke algebra, and in turn affords a potential theory of geometric realization and/or categorification for such wreath product $\Sigma_m \wr \Sigma_d$ that are not Coxeter groups in general.

\begin{prop}\label{prop:gamma}
Recall $\pi_m$ from \eqref{def:pijm}.
Let $C := C_{(-,-,\dots,-)}$, and let $\gamma :=  \pi_m(I_{\wo{m}})$ where  $\wo{m} \in \Sigma_m$ is the longest element. 
Then:
\begin{itemize} 
\item[(a)] $I_{\wo{m}} = I_{\wo{m-1}} I_{m-1\to 1}$.
\item[(b)] $I_{\wo{m}} I_i = I_{m+1-i} I_{\wo{m}}$.
\item[(c)] $C = \gamma^2$.
\end{itemize}
\end{prop}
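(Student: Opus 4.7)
I would handle the three parts in order, with (a) and (b) being classical and setting up the machinery used in (c).

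For (a), I would appeal to the standard reduced-expression description of the longest element: $w_0^{(m)} = w_0^{(m-1)} \cdot (s_{m-1} s_{m-2} \cdots s_1) = w_0^{(m-1)} \cdot s_{m-1 \to 1}$, where $w_0^{(m-1)}$ is embedded as the longest element of the parabolic subgroup $\Sigma_{m-1} \leq \Sigma_m$. The length count $\binom{m-1}{2} + (m-1) = \binom{m}{2} = \ell(w_0^{(m)})$ confirms the factorization is reduced, so passing to the Hecke algebra (where $I_w$ is well-defined on reduced expressions) delivers the claim.

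For (b), the identity is the classical fact that the longest element of $\Sigma_m$ realizes the non-trivial diagram automorphism of the type $A_{m-1}$ Dynkin diagram by conjugation, i.e.\ $w_0 s_i w_0^{-1}$ is the ``opposite'' simple reflection. Because the words $w_0 s_i$ and $s_{?} w_0$ represent the same element of $\Sigma_m$ and have equal length, well-definedness of $I_w$ yields the asserted equality in the Hecke algebra.

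For (c), I would proceed by induction on $m$; the case $m=1$ is trivial (both sides equal $1$). Since $\pi_m$ is an index-shift, the claim $C = \gamma^2$ is equivalent to the Hecke-algebra identity
\[
I_{\wo{m}}^2 \;=\; \prod_{i=1}^{m-1} I_{i \to 1}\, I_{1 \to i},
\]
where the product is taken inside the commuting family from Proposition~\ref{prop:braid}(c). Combining (a) with the mirror reduced factorization $w_0^{(m)} = (s_1 s_2 \cdots s_{m-1}) \cdot w_0^{(m-1)}$ (whose reducedness follows from the same length count), I obtain the two equal descriptions $I_{\wo{m}} = I_{\wo{m-1}} I_{m-1 \to 1} = I_{1 \to m-1} I_{\wo{m-1}}$. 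Multiplying these gives
\[
I_{\wo{m}}^2 \;=\; I_{\wo{m-1}} \cdot \bigl(I_{m-1 \to 1}\, I_{1 \to m-1}\bigr) \cdot I_{\wo{m-1}},
\]
which, together with the inductive hypothesis $I_{\wo{m-1}}^2 = \prod_{i=1}^{m-2} I_{i\to 1} I_{1\to i}$, reduces the proof to verifying that the middle factor $I_{m-1 \to 1} I_{1 \to m-1}$ commutes with $I_{\wo{m-1}}$. To establish this commutation I would apply (b) (for $\wo{m-1}$) to transport $I_{\wo{m-1}}$ past each $I_i$ with $i \le m-2$ appearing in $I_{m-1 \to 1} I_{1 \to m-1}$, and handle the two copies of the boundary factor $I_{m-1}$ directly via the braid relation $I_{m-2} I_{m-1} I_{m-2} = I_{m-1} I_{m-2} I_{m-1}$ together with the quadratic relation for $I_{m-1}$. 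Applying $\pi_m$ to the resulting identity then yields $C = \gamma^2$.

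The main obstacle is exactly this last commutation: since $I_{m-1}$ does not lie in the parabolic subalgebra $\cH_q(\Sigma_{m-1})$ containing $I_{\wo{m-1}}$, (b) cannot move $I_{\wo{m-1}}$ through $I_{m-1}$ in a single step, and one must carefully intertwine (b) with the braid/quadratic relations so that the extra terms produced on each side cancel; the symmetric appearance of $I_{m-1}$ on both ends of $I_{m-1 \to 1} I_{1 \to m-1}$ is what makes this cancellation work.
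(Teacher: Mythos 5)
Your treatment of (a) and (b) matches the paper's (which simply cites the reduced expression for $\wo{m}$ and the conjugation identity $\wo{m}s_i = s_{m-i}\wo{m}$); one small caution on (b): since $\ell(\wo{m}s_i) < \ell(\wo{m})$, the products $I_{\wo{m}}I_i$ and $I_jI_{\wo{m}}$ are \emph{not} of the form $I_{\wo{m}s_i}$, so ``well-definedness of $I_w$'' alone does not finish it; you need to factor through $I_{\wo{m}} = I_{\wo{m}s_i}I_i = I_jI_{\wo{m}s_i}$ and cancel. For (c), your route (compute $I_{\wo{m}}^2$ via the two mirror factorizations and reduce to a commutation) uses exactly the same ingredients as the paper, which instead runs a direct chain of substitutions on $C$: by induction $C = \pi_m(I_{\wo{m-1}}^2\, I_{m-1\to1}I_{1\to m-1})$, then (a) turns $I_{\wo{m-1}}I_{m-1\to1}$ into $I_{\wo{m}}$, (b) moves $I_{1\to m-1}$ past $I_{\wo{m}}$ as $I_{m-1\to1}$, and (a) again reassembles $I_{\wo{m-1}}I_{m-1\to1}=I_{\wo{m}}$. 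The one genuine soft spot in your plan is that you flag the commutation of $I_{m-1\to1}I_{1\to m-1}$ with $I_{\wo{m-1}}$ as a delicate ``main obstacle'' requiring braid and quadratic manipulations on $I_{m-1}$; in fact it is a one-liner from (a), its mirror $I_{\wo{m}} = I_{1\to m-1}I_{\wo{m-1}}$, and (b):
\[
I_{\wo{m-1}}\,\bigl(I_{m-1\to 1} I_{1\to m-1}\bigr)
= I_{\wo{m}}\, I_{1\to m-1}
= I_{m-1\to 1}\, I_{\wo{m}}
= \bigl(I_{m-1\to 1} I_{1\to m-1}\bigr)\, I_{\wo{m-1}},
\]
so no quadratic relations are needed anywhere (indeed none are used in the paper's proof). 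With that substitution your argument closes and is essentially the paper's proof, just reorganized around $I_{\wo{m}}^2$ rather than around $C$.
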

\proof
Part (a) follows from {the fact} that $s_1 s_{2\to1} \dots s_{m-1\to1} = \wo{m}$ is a reduced expression.
Part (b) follows from {the fact} that $\wo{m} s_i = s_{m+1-i} \wo{m}$ for all $i$.
For part (c), we prove it by an induction on $m$. The base case $m=2$ follows since $\wo{2} = s_1$ and $C= I_3^2$ as computed in Example~\ref{ex:h2}.
For $m \geq 3$, we have 
\eq
\begin{array}{rll}
C &= c_{m,1} \dots c_{m,m-1} = \pi_m(I_1^2 I_{2.1.1.2} \dots I_{m-1\to1} I_{1\to m-1})
\\
&= \pi_m(I_{\wo{m-1}}^2 I_{m-1\to1} I_{1\to m-1}) &\textup{via the induction hypothesis}
\\
&= \pi_m (I_{\wo{m-1}} I_{\wo{m}} I_{1\to m-1}) &\textup{by part (a)}
\\
&= \pi_m (I_{\wo{m-1}} I_{m-1\to 1}I_{\wo{m}} ) &\textup{by part (b)}
\\
&= \gamma^2. &\textup{by part (a)}
\end{array}
\endeq
\endproof
The extra generator $H_1$ behaves well with respect to the bar involution in the following way:
\begin{prop}\label{prop:cb1}
The following element  is bar-invariant:
\eq
b_1:= v^{-m(2m-1)} H_1 \={\gamma} = \sum_{\epsilon \in \{\pm\}^m} I_\epsilon C_\epsilon \={\gamma}.
\endeq
\end{prop}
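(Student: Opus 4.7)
The plan is to apply the bar involution directly to the expansion $b_1 = \sum_\epsilon I_\epsilon C_\epsilon \overline{\gamma}$ and prove $\overline{b_1} = b_1$ termwise. Using that the bar is a ring automorphism with $\overline{v} = v^{-1}$ and $\overline{I_w} = I_{w^{-1}}^{-1}$, I would first compute the bar of each building block:
(i) directly from the definitions $I^{\pm}_{a\to b}$, one checks $\overline{I_{a\to b}^{\pm}} = I_{a\to b}^{\mp}$, hence $\overline{I_\epsilon} = I_{-\epsilon}$, where $-\epsilon$ denotes the componentwise sign flip;
(ii) distributing the bar over the two factors of $c_{m,k} = I_{m+k\to m+1} I_{m+1\to m+k}$ and combining yields $\overline{c_{m,k}} = c_{m,k}^{-1}$;
(iii) $\overline{\overline{\gamma}} = \gamma$. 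Putting these together gives
$$\overline{b_1} = \sum_\epsilon I_{-\epsilon} \Bigl( \prod_{i:\epsilon_i=-} c_{m,m-i}^{-1} \Bigr) \gamma.$$

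Next, I would reindex via $\epsilon \mapsto -\epsilon$, so that the equation $\overline{b_1} = b_1$ becomes the termwise identity
$$\prod_{i:\epsilon_i=+} c_{m,m-i}^{-1} \gamma \;=\; C_\epsilon \, \overline{\gamma}
\qquad \text{for each } \epsilon \in \{\pm\}^m.$$
The crucial algebraic input here is that the $c_{m,k}$'s commute pairwise (Proposition~\ref{prop:braid}(c)): multiplying on the left by $\prod_{i:\epsilon_i=+} c_{m,m-i}$ merges with $C_\epsilon = \prod_{i:\epsilon_i=-} c_{m,m-i}$ into the full product $\prod_{i=1}^{m} c_{m,m-i}$, and the $\epsilon$-dependence disappears entirely. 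Since $c_{m,0} = 1$, this equals $C$, and the whole family of identities collapses to the single equation $\gamma = C\, \overline{\gamma}$.

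The final step invokes Proposition~\ref{prop:gamma}(c), $C = \gamma^2$, reducing the claim to $\gamma \overline{\gamma} = 1$, equivalently $\overline{\gamma} = \gamma^{-1}$. This is the conceptual heart of the proof: since $\gamma = \pi_m(I_{\wo{m}})$ and the longest element $\wo{m}$ of $\Sigma_m$ is an involution, the general formula $\overline{I_w} = I_{w^{-1}}^{-1}$ specializes to $\overline{I_{\wo{m}}} = I_{\wo{m}}^{-1}$. The homomorphism $\pi_m$ commutes with bar (both act diagonally on generators $T_i$ and fix $v$ after inversion), and so $\overline{\gamma} = \pi_m(I_{\wo{m}}^{-1}) = \gamma^{-1}$.

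The main obstacle I anticipate is the careful bookkeeping of the sign-flip bijection and the ordering of factors in the non-commutative setting of $\cH_q(\Sigma_{2m})$; the miracle that makes everything collapse is the commutativity of the $c_{m,k}$'s combined with the involutivity of $\wo{m}$. Once these pieces are in place, the whole bar-invariance statement rests on the single identity $\gamma \overline{\gamma} = 1$, and the two inputs $C = \gamma^2$ (Proposition~\ref{prop:gamma}(c)) and $\wo{m} = \wo{m}^{-1}$ are exactly what the paper has already provided.
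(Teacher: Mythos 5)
Your proposal is correct and follows essentially the same route as the paper: apply the bar termwise using $\overline{I_\epsilon}=I_{-\epsilon}$, reindex by the sign flip, and reduce the resulting termwise identity $C_\epsilon\overline{\gamma}=\overline{C}_{-\epsilon}\gamma$ to Proposition~\ref{prop:gamma}(c) via the commutativity of the $c_{m,k}$'s. The only difference is that you make explicit the facts $\overline{c_{m,k}}=c_{m,k}^{-1}$ and $\overline{\gamma}=\gamma^{-1}$ (from $\wo{m}$ being an involution), which the paper's one-line final step leaves implicit.
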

\proof
Note that for any $\epsilon \in \{\pm\}^m$,
\eq
\={I}_\epsilon
=
\={I^{\epsilon_1}_{m\to 2m-1}} \dots \={I^{\epsilon_m}_{1\to m}}
=
I^{-\epsilon_1}_{m\to 2m-1} \dots I^{-\epsilon_m}_{1\to m}
=
I_{\={\epsilon}},
\endeq
where $\={\epsilon} := -\epsilon \in \{\pm\}^m$. 
Recall that $H_1 = v^{m(2m-1)}\sum_\epsilon I_\epsilon C_\epsilon$,
so
$b_1 = \sum_\epsilon I_\epsilon C_\epsilon \={\gamma},$
and hence
\eq
\={b}_1 =  \sum_\epsilon \={I}_\epsilon \={C}_\epsilon \gamma
= \sum_{\={\epsilon}} I_{\={\epsilon}} \={C}_{\epsilon} \gamma
= \sum_{\epsilon} I_{\epsilon} \={C}_{\={\epsilon}} \gamma.
\endeq
Therefore, $b_1$ is bar-invariant as long as
\eq\label{eq:Cge}
C_{\epsilon} \={\gamma} = \={C}_{\={\epsilon}}\gamma
\quad
\textup{ for all }\epsilon \in \{\pm\}^m.
\endeq
Thanks to Proposition~\ref{prop:gamma}(c), one has $\gamma^2 = C = C_{\={\epsilon}} C_{\epsilon}$ for any $\epsilon$, and hence \eqref{eq:Cge} follows from a right multiplication by $\={\gamma}$ and a left multiplication by $\={C}_{\={\epsilon}}$.

\endproof
{With the help of this distinguished element $b_1$, 
we are now able to construct a bar-invariant basis $\{b_x~|~x\in \Sigma_m\wr \Sigma_2\}$ of the Hu algebra that satisfies a unitriangular relation with respect to a standard basis $\{I_x ~|~ x\in \Sigma_m \wr\Sigma_2\}$.
See Remark~\ref{rmk:HuCB} for a discussion on the positivity of $\{b_x\}_x$ in comparison with the canonical bases.}
\begin{thm}\label{thm:std}
Let $I_{wt_1} := I_w b_1$ for $w\in \Sigma_m \times \Sigma_m$.
Then $\{I_x ~|~ x\in \Sigma_m \wr \Sigma_2\}$ forms a standard basis of $\cA(m)$ in the sense that $\={I_x} \in I_x + \sum_{y<x} \ZZ[v^{\pm1}] I_y$ for all $x\in \Sigma_m \wr \Sigma_2$.
\end{thm}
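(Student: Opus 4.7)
My plan is to establish the basis claim first, then the bar-triangularity by a case split on whether $x \in \Sigma_m \times \Sigma_m$ or $x = wt_1$.

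\medskip

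\noindent\textbf{Basis.} By Proposition~\ref{prop:Hu}(d), $\cA(m) = \cH_q(\Sigma_m^2) \oplus \cH_q(\Sigma_m^2) H_1$ as a left $\cH_q(\Sigma_m^2)$-module, and by Proposition~\ref{prop:Hu}(a) the total dimension is $2(m!)^2 = |\Sigma_m\wr\Sigma_2|$. The first summand has basis $\{I_w \mid w\in \Sigma_m^2\}$. For the second summand, I would unwind the definition as $I_{wt_1} = v^{-m(2m-1)}\, I_w H_1 \bar{\gamma}$. Since $\gamma = \pi_m(I_{w_0(m)})$ lies entirely in the second factor of $\cH_q(\Sigma_m^2)$, the wreath relations of Proposition~\ref{prop:Hu}(d) (which say that $H_1$ intertwines the two factors of $\cH_q(\Sigma_m^2)$) yield $H_1\bar\gamma = \bar\gamma^{(1)} H_1$, where $\bar\gamma^{(1)}$ is the same polynomial expression transported to the first factor. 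The element $\bar\gamma^{(1)}$ is invertible in $\cH_q(\Sigma_m^2)$ (being a product of $\bar{I_j}$'s). Therefore the map $I_w\mapsto v^{-m(2m-1)}I_w \bar\gamma^{(1)}H_1 = I_{wt_1}$ is the composition of multiplication by an invertible element with the bijection $a\mapsto aH_1: \cH_q(\Sigma_m^2)\xrightarrow{\sim}\cH_q(\Sigma_m^2)H_1$, and so $\{I_{wt_1}\mid w\in \Sigma_m^2\}$ is a basis of the second summand.

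\medskip

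\noindent\textbf{Bar-triangularity.} For $x = w\in \Sigma_m\times\Sigma_m$ the claim reduces to the classical Kazhdan--Lusztig unitriangularity inside the parabolic subalgebra $\cH_q(\Sigma_m^2)$: we have $\bar{I_w}\in I_w + \sum_{y<w,\,y\in\Sigma_m^2}\ZZ[v^{\pm1}] I_y$, and Bruhat order on $\Sigma_m\wr\Sigma_2\subseteq\Sigma_{2m}$ restricts to the usual Bruhat order on $\Sigma_m^2$. For $x = wt_1$, the key input is Proposition~\ref{prop:cb1}, which gives $\bar{b_1}=b_1$; since bar is an algebra automorphism of $\cH_q(\Sigma_{2m})$, we obtain
\[
\bar{I_{wt_1}} \;=\; \bar{I_w}\,b_1 \;=\; \Big(I_w + \sum_{y<w,\,y\in\Sigma_m^2}c_{y,w}\,I_y\Big)\,b_1 \;=\; I_{wt_1} + \sum_{y<w}c_{y,w}\,I_{yt_1}.
\]
It remains to check that $y<w$ in $\Sigma_m^2$ implies $yt_1<wt_1$ in $\Sigma_{2m}$. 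For any $u\in \Sigma_m^2$ I would count inversions of $ut_1$ directly: the $m^2$ pairs $(i,j)$ with $i\le m<j$ contribute $m^2$ inversions, and the remaining inversions match those of $u$, giving $\ell(ut_1)=\ell(u)+m^2 = \ell(u)+\ell(t_1)$. Hence a reduced word for $ut_1$ is obtained by concatenating a reduced word of $u$ and a reduced word of $t_1$, and the subword characterization of Bruhat order in $\Sigma_{2m}$ yields $yt_1 \le wt_1$ with strict inequality (since $u\mapsto ut_1$ is a bijection).

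\medskip

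\noindent\textbf{Main obstacle.} The only nonroutine point is the bookkeeping in the basis step: one must correctly track the ``side-swap'' induced by the wreath relations and exploit that $\gamma$ is a product of $I_j$'s with $j>m$, so that $H_1\bar\gamma$ is conjugated into the first factor where it can be absorbed into $I_w$. Once this identification is made and the invertibility of $\bar\gamma^{(1)}$ is used, both the basis statement and the triangularity follow cleanly from Proposition~\ref{prop:cb1} and standard Bruhat-order properties of $\Sigma_{2m}$.
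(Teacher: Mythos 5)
Your proof is correct and follows essentially the same route as the paper: the bar-triangularity comes from the standard basis property of $\cH_q(\Sigma_m\times\Sigma_m)$ together with the bar-invariance of $b_1$ from Proposition~\ref{prop:cb1}, plus the compatibility $\ell(ut_1)=\ell(u)+\ell(t_1)$ with the Bruhat order of $\Sigma_{2m}$. You additionally spell out the linear-independence of $\{I_{wt_1}\}$ via the invertibility of $\bar{\gamma}$ and the wreath relations, a point the paper leaves implicit.
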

\proof
It follows from {the fact} that $\{I_w ~|~ \Sigma_m \times \Sigma_m\}$ forms a standard basis for $\cH_q(\Sigma_m\times \Sigma_m)$ with respect {to} the Bruhat order inherited from $\Sigma_{2m}$.
In particular, for $x = wt_1$, we have
$\={I_x} = \={I_w} b_1 \in I_{x t_1} + \sum_{y<x} \ZZ[v^{\pm1}] I_{y t_1}$.
\endproof

\begin{thm}
Let $\{b_w ~|~ \Sigma_m \times \Sigma_m\}$ be the canonical basis for $\cH_q(\Sigma_m\times \Sigma_m)$ in the sense that
$b_w \in I_w + \sum_{y<w} v\ZZ[v] I_y$,
and let
$b_{wt_1} := b_w b_1$ for $w\in \Sigma_m \times \Sigma_m$.
Then $\{b_x ~|~ x\in \Sigma_m \wr \Sigma_2\}$ forms a bar-invariant basis of $\cA(m)$ satisfying 
\eq
b_x \in I_x + \sum_{y < x} v\NN[v] I_y, 
\endeq
with respect to the Bruhat order inherited from $\Sigma_{2m}$.
\end{thm}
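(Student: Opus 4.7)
The plan is to split the statement by whether $x \in \Sigma_m \times \Sigma_m$ or $x = wt_1$ with $w \in \Sigma_m \times \Sigma_m$, verify bar-invariance and positive unitriangularity in each case, and then read off the basis property from a triangular change of basis against the standard basis of Theorem~\ref{thm:std}.

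For $x = w \in \Sigma_m \times \Sigma_m$, the element $b_w$ is by hypothesis the classical Kazhdan--Lusztig basis element of the parabolic Hecke algebra $\cH_q(\Sigma_m \times \Sigma_m)$. Hence it is bar-invariant and satisfies $b_w \in I_w + \sum_{y<w} v\NN[v]\, I_y$ with $y \in \Sigma_m \times \Sigma_m$, the positivity being the known positivity of Kazhdan--Lusztig polynomials in type $A$.

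For $x = wt_1$, bar-invariance of $b_{wt_1} = b_w b_1$ is immediate by combining bar-invariance of $b_w$ with that of $b_1$ established in Proposition~\ref{prop:cb1}. To get the expansion, I would write $b_w = I_w + \sum_{y<w} f_y(v)\, I_y$ with $f_y(v) \in v\NN[v]$ and right-multiply by $b_1$ to obtain
\[
b_{wt_1} = I_w b_1 + \sum_{y<w} f_y(v)\, I_y b_1 = I_{wt_1} + \sum_{y<w} f_y(v)\, I_{yt_1},
\]
using the definition $I_{zt_1} := I_z b_1$ from Theorem~\ref{thm:std}. Since the standard basis elements $I_{zt_1}$ and $I_z$ live in disjoint strata of the standard basis of $\cA(m)$, no collisions occur across cases, and one obtains $b_x \in I_x + \sum_{y<x} v\NN[v]\, I_y$.

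The remaining combinatorial point is that $y < w$ in the Bruhat order on $\Sigma_m \times \Sigma_m$ must force $yt_1 < wt_1$ in the Bruhat order of $\Sigma_{2m}$ (which is the one inherited by $\Sigma_m \wr \Sigma_2$). Setting $t_1 = w_{m,m}$ and reading off its one-line notation, the unique left descent of $w_{m,m}$ is $s_m$, which lies outside $\Sigma_m \times \Sigma_m$; therefore $w_{m,m}$ is the minimal length element of the right coset $(\Sigma_m \times \Sigma_m)w_{m,m}$, giving $\ell(zw_{m,m}) = \ell(z) + \ell(w_{m,m})$ for all $z \in \Sigma_m \times \Sigma_m$, and the subword property of Bruhat order then delivers $yt_1 < wt_1$. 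Unitriangularity against the standard basis of Theorem~\ref{thm:std} finally shows that $\{b_x\}$ is a basis of $\cA(m)$. The only nonformal input is the positivity $f_y \in v\NN[v]$, which is the classical positivity of Kazhdan--Lusztig polynomials in type $A$; everything else is direct manipulation of $b_1$ together with the coset-combinatorial fact about $w_{m,m}$.
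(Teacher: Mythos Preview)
Your proof is correct and follows essentially the same approach as the paper's own argument, which simply cites Proposition~\ref{prop:cb1} for bar-invariance and records the unitriangular expansion $b_{wt_1} = b_w b_1 \in I_{wt_1} + \sum_{y<w} v\NN[v]\, I_{yt_1}$. You add two useful clarifications the paper leaves implicit: the explicit appeal to Kazhdan--Lusztig positivity in type~$A$ for the $\NN$-coefficients when $x\in\Sigma_m\times\Sigma_m$, and the coset argument showing $y<w$ in $\Sigma_m\times\Sigma_m$ forces $yt_1<wt_1$ in the Bruhat order of $\Sigma_{2m}$ via the fact that $w_{m,m}$ is the minimal-length representative of its right $(\Sigma_m\times\Sigma_m)$-coset.
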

\proof
The bar-invariant part is due to Proposition~\ref{prop:cb1}. 
The unitriangular part follows from {the fact} that
$b_{xt_1} = b_x b_1 \in I_{xt_1} + \sum_{y<x} v\NN[v] I_{yt_1}$.
\endproof

\rmk\label{rmk:HuCB}
While the canonical basis of $\cH_q(W)$ of any Weyl group $W$ admits positive structural constants, i.e., $b_x b_y \in \sum_z \NN[v^{\pm1}] b_z$; our bar-invariant basis does not have such positivity because $b_1^2$ has mixed signs already.

However, $b_1$ seems to admit a negative expansion in the canonical basis, equivalently, a positive expansion in the dual canonical basis $\{c_w ~|~ w\in \Sigma_{2m}\}$, i.e.,
\eq
b_1 \in \sum_{z \in \Sigma_{2m}} \NN[(-v^{-1})^{\pm1}] b_z = \sum_{z \in \Sigma_{2m}} \NN[v^{\pm1}] c_z,
\endeq 
where the dual basis element $c_w$ is characterized by $\={c}_w = c_w \in I_w + \sum_{y<w} (-v^{-1})\NN[-v^{-1}] I_y$, and it satisfies that $c_x c_y \in \sum_z \NN[-v^{\pm1}] c_z$.
\endrmk

\subsection{} We present a concrete calculation of $b_{1}$ in a specific example below.

\exa
For $H_1 = H_1(3)$, write $I_{\epsilon_1 \epsilon_2}:= I_{(\epsilon_1,\epsilon_2,+)} + I_{(\epsilon_1,\epsilon_2,-)}$ for short. 
Then
$H_1 = v^{15}(I_{++} + I_{+-}I_{4.4} + I_{-+}I_{5.4.4.5} + I_{--}I_{4.4.5.4.4.5})$, 
$\gamma =  I_{4.5.4}$, and hence 
\eq
\begin{split}
b_1 &= (I_{++} + I_{+-}I_{4.4} + I_{-+}I_{5.4.4.5} + I_{--}I_{4.4.5.4.4.5})I_{\={4.5.4}}
\\
&= (I_{++}I_{\={4.5.4}} + I_{+-}I_{4.\={5.4}} + I_{-+}I_{\={4}.5.4} + I_{--}I_{4.5.4}) .
\end{split}
\endeq
The element $b_1$ is indeed bar-invariant since
\eq
\begin{split}
\={b}_1 &= (I_{--} + I_{-+}I_{\={4.4}} + I_{+-}I_{\={5.4.4.5}} + I_{++}I_{\={4.4.5.4.4.5}}) I_{4.5.4}
\\
&=  (I_{--}I_{4.5.4} + I_{-+}I_{\={4}.5.4} + I_{+-}I_{4.\={5.4}} + I_{++}I_{\={4.5.4}}) 
= b_1.
\end{split}
\endeq

Examples for a positive expansion of $b_1=b_1(m)$ in the dual canonical basis are shown below:
for $m=1$, 
\eq
b_1(1) = 2b_{s_1} + (-v-v\inv) = 2c_{s_1} + (v+v\inv).
\endeq
For $m=2$, we abbreviate $c_{s_{i}s_{j}\dots s_{k}}$ by $c_{i.j.\dots.k}$, and obtain: 
\eq
\begin{split}
b_1(2) &
= 4c_{2.3.1.2.3} 
+ 2(v+v\inv)(c_{2.3.1.2}+c_{2.1.2.3}+c_{3.1.2.3})
\\
&+ 4(c_{2.1.2}+c_{3.1.2})
+2(v^2+v^{-2})c_{1.2.3}
+2(v+v\inv)(c_{2.1}+c_{1.2}+c_{3.2}+c_{2.3})
\\
&+(v^2+2+v^{-2})(c_1+2c_2+c_3)
+(v^3+v+v\inv+v^{-3}).
\end{split}
\endeq

\endexa

\subsection{Generalized Hu Algebra} 

With the explicit formula above, we are in a position to define a Hecke subalgebra $\cH_{m\wr d}$ of $\cH_q(\Sigma_{md})$ for the wreath product $\Sigma_m \wr \Sigma_d$.
Note that Hu's Hecke subalgebra is a special case $\cA(m) = \cH_{m \wr 2}$.
\begin{Def}
Let $\cH_{m\wr d}$ be the subalgebra of $\cH_q(\Sigma_{md})$ generated by
\[
T_i, ( i \in [md]\setminus m\ZZ), \quad 
H_j, (1\leq j \leq d-1),
\]
where $H_{j+1} = \pi_{jm}(h^*_m)$ (see \eqref{def:pijm}).
The algebra $\cH_{m \wr d}$ will be called the {\em generalized Hu algebra} with $\cH_{m \wr 2} = \cA(m)$.
\end{Def}

The braid relations fail for the elements $H_i$'s.
For example, in the special case when $m=1$, the $H_i$'s satisfy the modified braid relations
\eq
H_i H_{i+1} H_i - H_{i+1} H_iH_{i+1} = (q-1)^2 (H_{i+1} - H_i),
\endeq
which appeared in the non-standard presentation of the type A Hecke algebra (cf. \cite{Wa07}).
In other words, Wang's non-standard presentation of $\cH_q(\Sigma_d)$  is just the ``standard'' presentation of the generalized Hu algebra $\cH_{1\wr d}$.
This suggests that there could be a general theory for covering algebras for $\cH_{m\wr d}$.

\section{The Schur Algebras for Quantum Wreath Products} \label{sec:SchurW}
\subsection{} 
In this section, 
we investigate the connection between various properties that are referred as the ``Schur-Weyl duality'', including the double centralizer property, the existing of a splitting, and the existence of a certain idempotent.
A systematical approach is provided to obtain the Schur-Weyl duality for the quantum wreath product  $A = B \wr \cH(d)$ arising from a Schur-Weyl duality for the base algebra $B$.
As a consequence, the Schur-Weyl duality for the Hu algebra $\cA(m) = \cH_q(\Sigma_m) \wr \cH(2)$ is established for the first time.
It should be noted that our approach is quite encompassing and allows us to recover the Schur-Weyl duality for other Hecke-like algebras appearing in Section~\ref{sec:examples}.

\begin{Def}[Wreath Schur Algebra]
Assume that $T$ is a right $A$-module.
We call $S^A(T) := \End_A(T)$ the {\em wreath Schur algebra}.
\end{Def}
\subsection{} \label{sec:SF}
Various constructions have appeared in the literature that have involved endomorphism algebras and the double centralizer property (cf. \cite[Chapter 1]{CPS96} \cite[Sections 2-3]{DEN04}). 
In familiar cases like commuting actions of the general linear group and the symmetric group, the ``Schur functor" is constructed via an explicit idempotent. 
For more general settings,  idempotents in algebras are not easy to locate. For the purposes of this paper, we present an alternative approach via splittings to prove 
the existence of an idempotent $e\in S$ (where $S$ is an endomorphism algebra) that can be used to construct a functor from $\text{Mod}(S)$ to $\text{Mod}(eSe)$.

For now, assume that $A$ is an arbitrary associative $K$-algebra, and $T$ is a right $A$-module with $S=\End_{A}(T)$. 
Then $T$ is a left $S$-module (under composition), 
and $T$ is an $S$-$A$-bimodule. Suppose one has a splitting,  as right $A$-modules,
\begin{equation} \label{E:splitW}
T\cong A\oplus Q
\quad
(\textup{for some right }A\textup{-module }Q).
\end{equation} 
There exists a contravariant functor 
${\mathcal G}(-):=\text{Hom}_{A}(-,T)$ whose image is a left $S$-module. Under this functor, 
${\mathcal G}(A)=\text{Hom}_{A}(A,T)\cong T$ is a direct {summand} of $S$ as a left $S$-module by using the splitting. 
Consequently, $T$ is a projective left $S$-module.  

Under the condition \eqref{E:splitW}, 
there exists an idempotent $e\in S$ such that $T\cong Se$. 
Moreover, 
$$\End_{S}(T)\cong \End_{S}(Se)\cong eSe.$$
One can define a Schur functor ${\mathcal F}:\text{Mod}(S) \rightarrow \text{Mod}(eSe)$ via ${\mathcal F}(M)=eM$.  
In this setting, the theory of Schur functors and their inverses has 
been developed in \cite{DEN04}.
\subsection{} The question remains when one can identify $A$ with $eSe$. 
When this holds the aforementioned Schur functor will conveniently take objects in $\text{Mod}(S)$ to $\text{Mod}(A)$. 
When $A$ is a self-injective algebra the following result is a corollary of a theorem proved by K{\"o}nig, Slungard and Xi \cite{KSX01}. 
The original statement of the corollary first appeared in \cite[59.6]{CR62}.  

\begin{theorem} \label{thm:SchurEquiv}
Let $S=\End_{A}(T)$. Assume that 
\begin{itemize} 
\item[(a)] $A$ is a finite dimensional self-injective algebra,
\item[(b)] $T$ is a faithful $A$-module.
\end{itemize}
\end{theorem}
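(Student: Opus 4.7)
The plan is to prove the expected conclusion, namely that the canonical algebra homomorphism $\pi \colon A \to \End_S(T)$, sending $a$ to the right-multiplication map $r_a \colon t \mapsto ta$, is an isomorphism (equivalently, writing $T \cong Se$ as in the preceding paragraph, that $A \cong eSe$, so that the Schur functor $\mathcal{F}$ lands in $\Mod(A)$).

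Injectivity of $\pi$ is immediate from hypothesis (b): if $r_a = 0$ then $a$ annihilates $T$, and faithfulness forces $a=0$. The main content is surjectivity, which I would reduce to showing that $T$ is a \emph{generator} of $\Mod(A)$, i.e., that $A_A$ is a direct summand of $T^{\oplus n}$ for some $n\geq 1$. This is where both hypotheses are used together. Since $A$ is finite-dimensional, faithfulness of $T$ is equivalent to the assertion that every simple $A$-module occurs in $T$; dually, the injective envelope of every simple embeds into $T$, and since injective modules split off, every indecomposable injective occurs as a direct summand of some $T^{\oplus n}$. Self-injectivity of $A$ now identifies the indecomposable injectives with the indecomposable projectives, so $A_A = \bigoplus_i P_i$ is itself a summand of $T^{\oplus n}$, say $T^{\oplus n} \cong A \oplus Q$.

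Once the generator property is in hand, surjectivity of $\pi$ is classical Morita theory. Given $\phi \in \End_S(T)$, apply $\phi^{\oplus n}$ to $T^{\oplus n}$ and conjugate by the splitting $T^{\oplus n}\cong A\oplus Q$ to obtain an endomorphism of $A_A$. Crucially, the projections and inclusions between summands of $T^{\oplus n}$ are elements of $\Mat_n(S)$, so commutativity of $\phi$ with $S$ forces the induced map on $A_A$ to be right multiplication by a unique element $a\in A$, and one checks that $\pi(a) = \phi$ by tracking the action on $T$ viewed as a summand of $T^{\oplus n}$.

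The main obstacle I expect is cleanly justifying the generator property: translating ``faithful over finite-dimensional self-injective'' into ``every indecomposable projective is a summand of $T^{\oplus n}$''. The two possible routes are a hands-on argument through socles and injective envelopes of simple modules as sketched above, or an invocation of the general criterion in \cite{KSX01}, which the authors explicitly cite and which packages this reduction abstractly. After that step the remainder of the argument is essentially formal.
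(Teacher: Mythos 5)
The paper does not actually prove this statement: it is quoted as a corollary of \cite{KSX01}, with the original version attributed to \cite[59.6]{CR62}. So your proposal is being measured against the classical argument rather than against anything in the text. Your overall architecture --- injectivity of $\pi$ from faithfulness, surjectivity by showing $T$ is a generator of $\Mod(A)$ and then applying Morita theory --- is the right one, and the final Morita-theoretic step is fine.

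The gap is in your justification of the generator property, exactly the step you flag as the main obstacle. First, faithfulness of $T$ is \emph{not} equivalent to every simple occurring in $T$: for $A=K[x]/(x^2)$ the one-dimensional simple module has every simple as a composition factor but is not faithful (only the forward implication, which is the one you use, is true). More seriously, ``every simple occurs in $T$'' does not yield ``every simple \emph{embeds} into $T$'' --- occurrence in the socle is strictly stronger than occurrence as a composition factor --- and even an embedding $S\hookrightarrow T$ does not produce an embedding $E(S)\hookrightarrow T$ of the injective envelope unless $T$ itself is injective (take $T=S$ simple and non-injective). So the chain ``faithful $\Rightarrow$ every simple occurs $\Rightarrow$ every indecomposable injective splits off some $T^{\oplus n}$'' breaks at both arrows as written. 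The repair is standard and shorter: since $A$ is finite dimensional and $\bigcap_{t\in T}\operatorname{ann}(t)=\operatorname{ann}_A(T)=0$, finitely many elements $t_1,\dots,t_n$ already cut out the zero ideal, so $a\mapsto (t_1a,\dots,t_na)$ embeds $A_A$ into $T^{\oplus n}$; self-injectivity makes $A_A$ injective, so this embedding splits and $A_A$ is a direct summand of $T^{\oplus n}$. With that substitution the rest of your argument goes through.
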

Then  $\End_{S}(T)\cong A$. 
\vskip .25cm 
Condition (a) of the theorem will be satisfied when $A$ is a quantum wreath product as long as the base algebra $B$ is a symmetric algebra and the assumptions in Proposition~\ref{prop:struc}(b) hold. 
For examples of quantum wreath products, to ensure the existence of a natural Schur functor, 
one needs to establish a splitting of $T$ as in (\ref{E:splitW}). 
This will ensure that condition (b) of the theorem is satisfied. 

\subsection{A Splitting Lemma, I}\label{sec:SchurA}
{For applications in Schur-Weyl duality, we make the following assumptions:
\eq\label{eq:SWcond}
\sigma = \textup{flip}, 
\quad\rho = 0, 
\quad\textup{and }
\eqref{def:wr1} \textup{--} \eqref{def:br5}\textup{ hold}.
\endeq
}
{Within Section~\ref{sec:SchurA}, we also assume that 
\eq\label{eq:repsilon}
r^2 =  \epsilon(S)r + \epsilon(R)
\quad \textup{for some}
\quad r\in K.
\endeq
}

Let $V_B$ be the vector space over $K$ with basis $\{v_i ~|~ i \in I\}$, where $I$ is a (possibly infinite) poset {that is totally ordered}.
Assume that $V_B$ is a right $B$-module, and hence $V_B^{\otimes d}$ is a right module over $B^{\otimes d}$ via the factorwise action.
For $\mu = (\mu_j)_j \in I^d$, set $v_\mu = v_{\mu_1} \otimes \ldots \otimes v_{\mu_d} \in V_B^{\otimes d}$. 
{Assume that there is $\mu \in I^d$ such that $\mu_1 < \dots < \mu_d$, and let  $W = W(\mu)$ be the subspace of $V_B^{\otimes d}$ spanned by elements of the form $v_{\mu\cdot g}$, where $\Sigma_d$ acts on $I^d$ by place permutations.}
Let the Hecke-like generators $H_i$'s in $A = B \wr \cH(d)$ act on the $B^{\otimes d}$-submodule $W$ from the right by
\eq\label{eq:vTiA}
v_\mu. H_i = \Bc{
v_{\mu \cdot s_i}&\tif \mu_i< \mu_{i+1};
\\
 v_{\mu \cdot s_i}.{R_i} + v_\mu.{S_i}&\tif \mu_i > \mu_{i+1}.
}
\endeq
{Thanks to the assumption \eqref{eq:repsilon} and that $I$ is totally ordered, the action \eqref{eq:vTiA} on $W$ extends to the following action on $V_B^{\otimes d}$, as long as \eqref{eq:vTiA} is well-defined: 
\eq\label{eq:vTiA2}
v_\mu. H_i = \Bc{
v_{\mu \cdot s_i}&\tif \mu_i< \mu_{i+1};
\\
r v_\mu &\tif \mu_i = \mu_{i+1};
\\
 v_{\mu \cdot s_i}.{R_i} + v_\mu.{S_i}&\tif \mu_i > \mu_{i+1}.
}
\endeq
}
{We now show that \eqref{eq:vTiA} is well-defined  in {two} separate cases. The first case is when $B$ acts on $V_B$ (and hence on the tensor products of $V_B$)  by a counit $\epsilon:B \to K$.
 The second case is the degenerate case when $A$ is just a twisted tensor product of $B^{\otimes d}$ by a group algebra $K\Sigma_d$.
}
\prop\label{rmk:Haction}
Assume that  \eqref{eq:SWcond} holds. Then:{
\begin{enumerate}[(a)]
\item  Equation~\eqref{eq:vTiA} extends to an $A$-action on $W$ if $B$ acts on $V_B$ by a counit $\epsilon$. 
\item If $S=0, R= 1\otimes 1$, then \eqref{eq:vTiA} extends to an $A$-action on $W$ .
\end{enumerate}
}
\endprop
\proof
{For (a), the verification of the quadratic relations reduces to the rank two case: for $i < j$,
$(v_{(i,j)} H_1) H_1 = v_{(j,i)} H_1 = \epsilon(S) v_{(j,i)} + \epsilon(R) v_{(i,j)} =  v_{(i,j)} (H_1^2)$. The other case $i > j$ can be checked similarly.
The verification of the braid relations reduces to the rank three case. We only present the most complicated case, for $i < j < k$:
\eq 
\begin{split}
&v_{(k,j,i)}H_1 H_2 H_1 = (\epsilon(S) v_{(k,j,i)} + \epsilon(R) v_{(j,k,i)})H_2H_1
\\ 
&\quad
=(\epsilon(S^2) v_{(k,j,i)} 
+ \epsilon(SR)(v_{(j,k,i)} + v_{(k,i,j)}) 
+  \epsilon(R^2)v_{(j,i,k)})H_1
\\
&\quad
=\epsilon(S^3+SR) v_{(k,j,i)} + \epsilon(S^2R)(v_{(j,k,i)} + v_{(k,i,j)}) + \epsilon(SR^2)(v_{(i,k,j)} + v_{(j,i,k)}) + \epsilon(R^3)v_{(i,j,k)} 
\\
&\quad
=(\epsilon(S^2) v_{(k,j,i)} 
+ \epsilon(SR)(v_{(j,k,i)} + v_{(k,i,j)}) 
+  \epsilon(R^2)v_{(i,k,j)})H_2
= v_{(k,j,i)}H_2 H_1 H_2. 
\end{split}
\endeq
When $i=j$, the braid relations hold due to a case-by-case analysis similar to the ones for $i\neq j$. The verification of quadratic relations  reduces to the rank two case: 
$$(v_{(i,i)} H_1) H_1 = r^2 v_{(i, i)} = \epsilon(S) v_{(i,i)} + \epsilon(R) v_{(i,i)} =  v_{(i,i)} (H_1^2).$$ 

The wreath relations hold since $\rho = 0$ and the fact that both $a\otimes b, b\otimes a \in B\otimes B$ act on $V_B^{\otimes 2}$ by the same scalar multiple $\epsilon(a)\epsilon(b)$. 


For (b), the $A$-action on $W$ is given explicitly by $v_\mu. H_i = v_{\mu\cdot s_i}$. The braid and quadratic relations are immediate; while the wreath relations follow from the rank two verification: 
$$v_{(i,j)} \sigma(b) H_1 = (\sum_{k} v_i b''_k \otimes v_j b'_k) H_1 = v_{(j,i)} b = v_{(i,j)} H_1 b,$$ 
where $b = \sum_k b'_k \otimes b''_k \in B\otimes B$.
}
\endproof
Schur-Weyl duality in the literature often assumes a condition $n \geq d$ for the double centralizer property to hold. In the following lemma, we demonstrate that the condition $n\geq d$ is essentially a shadow of the assumption that $V_B$ contains $d$ distinct direct summands which are isomorphic to $B$ as $B$-modules.
To be precise, one needs to assume additionally the subspace $W$ obtained from these direct summands admits an $A$-action given by \eqref{eq:vTiA} and right multiplications by $B^{\otimes d}$ (e.g., the {two} cases in Proposition~\ref{rmk:Haction}).
In such a case, $W \cong A$ as $A$-modules, and hence the desired splitting is obtained when the $A$-action on $W$ extends to $V_B^{\otimes d}$. 
\begin{lem} \label{lem:QSchur}
Assume that \eqref{eq:SWcond} holds, and there are right $B$-module isomorphisms $\phi_j: U_j \to B, v_{\mu_j} b \mapsto b$ such that $U_j$'s are distinct direct summands of $V_B$. 

{
If \eqref{eq:vTiA} extends to an $A$-action on $W=W(\mu)$,
then $W \simeq A$ as right $A$-modules. 
If \eqref{eq:vTiA}extends further to an $A$-action on  $V_B^{\otimes d}$,} 
then $V_B^{\otimes d}$ has the following splitting, as right $A$-modules,
\[
V_B^{\otimes d} \simeq A \oplus Q.
\]
\end{lem}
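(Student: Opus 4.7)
The plan is to establish the two conclusions in sequence: first the isomorphism $W \simeq A$ via the canonical map $a \mapsto v_\mu \cdot a$, then the splitting via an explicit $A$-stable complement built from the direct summand hypothesis.

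For the first conclusion, consider $\psi : A \to W$, $a \mapsto v_\mu \cdot a$, which is right $A$-linear since $W$ carries an $A$-action by assumption. The key computation is the identity $v_\mu \cdot H_w = v_{\mu \cdot w}$ for every $w \in \Sigma_d$, which I would prove by induction on $\ell(w)$: given a reduced expression $w = s_{i_1} \cdots s_{i_\ell}$ and intermediate element $v = s_{i_1} \cdots s_{i_{j-1}}$, reducedness forces $v(i_j) < v(i_j + 1)$, hence $(\mu \cdot v)_{i_j} < (\mu \cdot v)_{i_j + 1}$ because $\mu_1 < \cdots < \mu_d$, and the first branch of \eqref{eq:vTiA} yields $v_{\mu \cdot v} \cdot H_{i_j} = v_{\mu \cdot v s_{i_j}}$. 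Invoking Theorem~\ref{thm:basis}, $\psi$ sends the $K$-basis $\{H_w (b_{\lambda_1} \otimes \cdots \otimes b_{\lambda_d})\}$ of $A$ to $\{v_{\mu \cdot w}\,(b_{\lambda_1} \otimes \cdots \otimes b_{\lambda_d})\}$; since each $v_{\mu \cdot w} \cdot B^{\otimes d}$ lies in the tensor summand $U_{w(1)} \otimes \cdots \otimes U_{w(d)}$ of $V_B^{\otimes d}$ and these summands are $K$-linearly independent (the $U_j$ being direct summands of $V_B$ and each $\phi_j$ an isomorphism), the image is a linearly independent set spanning $W$. Hence $\psi$ is an isomorphism of right $A$-modules.

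For the splitting, I would fix a $B$-module complement $C$ so that $V_B = U_1 \oplus \cdots \oplus U_d \oplus C$, yielding a decomposition $V_B^{\otimes d} = \bigoplus_{\mathbf{i}} V_{i_1} \otimes \cdots \otimes V_{i_d}$ indexed by tuples $\mathbf{i} \in \{1, \ldots, d, \ast\}^d$ (with $V_j = U_j$ and $V_\ast = C$). Let $Q$ be the sum of those summands whose $\mathbf{i}$ is not a permutation of $(1, \ldots, d)$; this gives $V_B^{\otimes d} = W \oplus Q$ as right $B^{\otimes d}$-modules. To promote this to an $A$-module splitting, I would verify $H_i$-stability of $Q$: by \eqref{eq:vTiA2}, $H_i$ sends a standard basis vector $v_\lambda$ to a $K$-combination of $v_\lambda$ and $v_{\lambda \cdot s_i}$, which share the same multiset of indices, so the profile ``permutation vs.\ non-permutation'' of the associated tuple is preserved. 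Transporting the splitting via $\psi^{-1}$ then completes the proof.

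The main obstacle will be reconciling the two descriptions of $W$ and $Q$---the module-theoretic one arising from the decomposition of $V_B$, and the combinatorial one given by the standard basis $\{v_\lambda\}_\lambda$---because the $B^{\otimes d}$-action on $V_B^{\otimes d}$ need not preserve the span of any single $v_\lambda$. In the principal applications of Proposition~\ref{rmk:Haction}, such as when $B$ acts through a counit so that every $K$-subspace is automatically $B$-stable, the two descriptions coincide and the verification is routine; a fully general argument may require an additional compatibility assumption between the $B$-action and the chosen direct sum decomposition.
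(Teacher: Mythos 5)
Your proof of the first conclusion, $W \simeq A$, is correct and is essentially the paper's own argument: the same comparison map (the paper writes its inverse, $v_\mu\cdot a \mapsto a$), the same observation that $\mu_1 < \cdots < \mu_d$ forces $v_\mu\cdot H_w = v_{\mu\cdot w}$ so that $H_w$ acts by place permutation, and the same use of Theorem~\ref{thm:basis} combined with the fact that the summands $U_{w(1)}\otimes\cdots\otimes U_{w(d)}$ for distinct $w$ meet trivially (both you and the paper implicitly read ``distinct direct summands'' as saying that $U_1\oplus\cdots\oplus U_d$ is itself a direct summand of $V_B$, which is what the linear-independence step really uses).

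For the splitting you in fact do more than the paper: its proof stops after establishing $W\simeq A$ and treats $V_B^{\otimes d}\simeq A\oplus Q$ as immediate, whereas you construct a candidate complement $Q$ from a decomposition $V_B = U_1\oplus\cdots\oplus U_d\oplus C$ and attempt to verify its $A$-stability. One imprecision: \eqref{eq:vTiA2} sends $v_\lambda$ to a $B^{\otimes d}$-combination, not a $K$-combination, of $v_\lambda$ and $v_{\lambda\cdot s_i}$ (the third branch involves $R_i$ and $S_i$), so the ``profile is preserved'' step needs the tensor-factor decomposition to be compatible with both the standard basis and the $B^{\otimes d}$-action --- exactly the obstacle you flag in your last paragraph. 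That obstacle is genuine for the lemma as stated in full generality, but it disappears in the two situations of Proposition~\ref{rmk:Haction} where the action is actually known to exist: in the counit case $v_\lambda B^{\otimes d}=Kv_\lambda$, and in the examples of the second case the $U_j$ and $C$ are spanned by standard basis vectors, so each tensor summand is $H_i$- and $B^{\otimes d}$-stable. Since the paper supplies no argument at all for the $A$-module (as opposed to $B^{\otimes d}$-module) directness of the complement, your explicit construction plus the stated caveat is, if anything, a more careful treatment than the published one; I would only ask you to record the compatibility hypothesis you need (e.g., that each $v_i$ lies in a single summand of the chosen decomposition and that each summand is a $B$-submodule) rather than leaving it as a possible future assumption.
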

\proof
We may assume that ${\mu_1} < {\mu_2} < \dots < {\mu_d}$.
It suffices to show that the assignment 
\eq
\psi:v_\mu.a \mapsto a
\quad
(\textup{for all }a \in A)
\endeq
defines an $A$-module isomorphism $W \to A$.
{We show first that $\psi$ is well-defined. Suppose that $v_\mu H_w b = v_\mu H_{w'}b'$ for some $w, w'\in \Sigma_d, b,b' \in B^{\otimes d}$. By \eqref{eq:vTiA}, 
$v_\mu H_w = v_{\mu\cdot w} \in U_{w(1)} \otimes \cdots U_{w(d)}$, so is $v_\mu H_w b$ because $U_j$'s are summands. Since $U_j$'s are all distinct, $w$ must agree with $w'$. Therefore, $b = (\phi_{w(1)} \otimes \dots \otimes \phi_{w(d)})(v_{\mu\cdot w} b) =(\phi_{w(1)} \otimes \dots \otimes \phi_{w(d)})(v_{\mu\cdot w} b') = b'$.}

Next, it is easy to verify that $\psi$ is an epimorphism.
For injectivity, thanks to the basis theorem, an arbitrary element in $A$ is of the form $a= \sum_{w\in \Sigma_d} H_w (b_1^{(w)} \otimes \dots \otimes b_d^{(w)})$ for some $b^{(w)}_i \in B$. 
Since ${\mu_1} < {\mu_2} < \dots < {\mu_d}$, by \eqref{eq:vTiA}, $H_w$ acts on $v_{({\mu_1}, {\mu_2}, \dots, {\mu_d})}$ by place permutation.

Suppose that $v_{({\mu_1}, {\mu_2}, \dots, {\mu_d})}.\sum_{w\in \Sigma_d} H_w (b_1^{(w)} \otimes \dots \otimes b_d^{(w)}) = 0$.
Each $v_{({\mu_1}, {\mu_2}, \dots, {\mu_d})} H_w (b_1^{(w)} \otimes \dots \otimes b_d^{(w)})$ lies in a different summand $U_{w(\mu_1)} \otimes \dots \otimes U_{w(\mu_d)} \subseteq V_B^{\otimes d}$, and hence $b_1^{(w)} \otimes \dots \otimes b_d^{(w)}$ must be zero for all $w$.
\endproof
Finally, a double centralizer property is obtained in the context of Theorem~\ref{thm:SchurEquiv}, where the $S$-$A$-bimodule $T$ {is $V_B^{\otimes d}$.}
\begin{cor}
{Let $A = B \wr \cH(d)$ be a quantum wreath product for a finite dimensional algebra $B$ such that  Proposition~\ref{prop:struc}(b)  holds.
Assume that Lemma \ref{lem:QSchur} holds.
Then, the Schur duality holds, i.e., $\operatorname{End}_{S^A(T)}(T)\cong A$, where {$T = V_B^{\otimes d}$}.}
{ Moreover, there exists a Schur functor.}
\end{cor}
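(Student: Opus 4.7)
The plan is to assemble the corollary from three ingredients already available in the paper: Proposition~\ref{prop:struc}(b), which upgrades $A = B \wr \cH(d)$ to a symmetric (hence self-injective) algebra; Lemma~\ref{lem:QSchur}, which gives the splitting $T = V_B^{\otimes d} \simeq A \oplus Q$ as right $A$-modules; and Theorem~\ref{thm:SchurEquiv}, the self-injective double centralizer statement. The Schur functor is then obtained essentially for free from the splitting by the formalism of Section~\ref{sec:SF}.

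First, I would verify hypothesis (a) of Theorem~\ref{thm:SchurEquiv}. By assumption, $B$ is finite dimensional and Proposition~\ref{prop:struc}(b) applies, so $A = B \wr \cH(d)$ is a symmetric $K$-algebra with trace given by $\Tr(bH_w) = \Tr(b)\delta_{w,1}$. Since $B$ has finite $K$-dimension and $A$ has a basis of size $|\Sigma_d| \cdot (\dim B)^d$ by Theorem~\ref{thm:basis}, $A$ is finite dimensional; symmetric implies self-injective, so hypothesis (a) holds.

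Next, I would check hypothesis (b), faithfulness of $T = V_B^{\otimes d}$. This is where Lemma~\ref{lem:QSchur} enters: under the standing assumptions of the corollary, we have a right $A$-module decomposition $T \simeq A \oplus Q$. Since the right regular module $A_A$ is faithful (any $a \in A$ with $A\cdot a = 0$ kills $1 \cdot a = a$), so is any module containing it as a direct summand; thus $T$ is faithful. Applying Theorem~\ref{thm:SchurEquiv} with $S = S^A(T) = \End_A(T)$ gives $\End_S(T) \cong A$, which is the double centralizer property.

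For the Schur functor, I would follow the recipe of Section~\ref{sec:SF} verbatim. The splitting $T \simeq A \oplus Q$ makes $T$ a projective generator on the $A$-side; applying $\cG(-) = \Hom_A(-,T)$ turns this into $\cG(A) \cong T$ being a direct summand of $\cG(A\oplus Q) \cong S$ as left $S$-modules, so $T$ is a projective left $S$-module, hence $T \cong Se$ for an idempotent $e \in S$. Then $eSe \cong \End_S(Se) \cong \End_S(T) \cong A$ by the double centralizer property just established, and the functor $\cF \colon \Mod(S) \to \Mod(A)$, $M \mapsto eM$, is the desired Schur functor.

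I do not expect a genuine obstacle: the three cited results are designed exactly to fit together here. The only point that requires mild care is checking that all hypotheses of Proposition~\ref{prop:struc}(b) and Lemma~\ref{lem:QSchur} are indeed implied by the corollary's blanket assumption that ``Lemma~\ref{lem:QSchur} holds'' together with the standing conditions \eqref{eq:SWcond} and Proposition~\ref{prop:struc}(b); in particular one should note that $\sigma = \mathrm{flip}$ automatically preserves $\Tr$ on $B\otimes B$, and $\rho = 0$ with $R$ invertible is imposed in \eqref{eq:SWcond} together with the setup. Once this bookkeeping is in place, the proof is a one-line invocation of Theorem~\ref{thm:SchurEquiv} followed by the idempotent construction of Section~\ref{sec:SF}.
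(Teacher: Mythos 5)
Your proposal is correct and follows exactly the paper's own argument: the paper's proof is the one-line combination of Theorem~\ref{thm:SchurEquiv} with Lemma~\ref{lem:QSchur} (using Proposition~\ref{prop:struc}(b) for self-injectivity), plus the Section~\ref{sec:SF} formalism for the Schur functor. You have simply spelled out the routine verifications (finite-dimensionality, faithfulness via the direct summand $A \subseteq T$) that the paper leaves implicit.
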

\proof
The statement follows by combining Theorem~\ref{thm:SchurEquiv} and Lemma~\ref{lem:QSchur}.
{The existence of a Schur functor follows from the discussion in Section \ref{sec:SF}, thanks to the existence of the splitting of $T$.}
\endproof
\subsection{Examples}
\subsubsection{Hecke Algebras of Type A}
We retain the setup as in Section~\ref{ex:AK}, and then set $m=1$. 
Therefore, $B=K$ and $A=B \wr \cH(d)$ is just the Hecke algebra of type A.
{Note that in this degenerate case, $X$ is identified with $q_1 \in K^\times$ and $\rho = 0$.}
Let $V_B=V(n)$ be the $K$-space spanned by {$v_i$ where $i \in I := \{1, \dots, n\}$ with the usual total order.
Thus,  $V_B^{\otimes d}$ admits an $A$-action by Proposition~\ref{rmk:Haction}(a).} To be precise, for $1\leq i \leq d-1$,
\eq\label{eq:vHi}
v_\mu. H_i = \Bc{
v_{\mu \cdot s_i}&\tif \mu_i< \mu_{i+1};
\\
q v_{\mu \cdot s_i} &\tif \mu_i= \mu_{i+1};
\\
qv_{\mu \cdot s_i} + (q-1)v_\mu&\tif \mu_i > \mu_{i+1}.
}
\endeq

The assumptions in Lemma~\ref{lem:QSchur} hold if and only if $V(n)$ contains a $d$-dimensional subspace, i.e., $n\geq d$, which is the well-known condition to afford the quantum Schur-Weyl duality \cite{J86, DJ89}.
Moreover, $S^A(V_B^{\otimes d})$ coincides with the type A $q$-Schur algebra $S_q(n,d) := \End_{\cH_q(\Sigma_d)}(V(n)^{\otimes d})$.
\begin{cor}
If $n\geq d$, then the Schur duality holds between the Hecke algebra $\cH_q(\Sigma_{d})$ and its corresponding $q$-Schur algebra $S_q(n,d)$.
{Moreover, a Schur functor exists.}
\end{cor}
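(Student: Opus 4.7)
The plan is to apply the general corollary immediately preceding the statement. In the degenerate setup $B = K$ and $A = \cH_q(\Sigma_d)$, with $V_B = V(n)$ acted on by the counit $\epsilon = \mrm{id}_K$, I need to verify its two required ingredients: that Proposition~\ref{prop:struc}(b) produces a symmetric algebra, and that Lemma~\ref{lem:QSchur} is applicable.

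For the first, observe that $R = q(1\otimes 1)$ is invertible in $K$, $\rho = 0$, the trace on $B = K$ is the identity map (hence trace-preserving under the flip $\sigma$), and the conditions \eqref{def:wr1}--\eqref{def:br5} of \eqref{eq:SWcond} reduce to trivialities because $R, S$ are scalars and $\rho$ is zero; thus Proposition~\ref{prop:struc}(b) applies and $A = \cH_q(\Sigma_d)$ is symmetric, hence finite-dimensional self-injective. For the second, Proposition~\ref{rmk:Haction}(a) guarantees that the prescription \eqref{eq:vTiA2} extends to a right $A$-action on $V(n)^{\otimes d}$, which is exactly the familiar formula \eqref{eq:vHi}. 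The hypothesis $n \geq d$ then allows the choice $\mu = (1, 2, \ldots, d)$, so that the $d$ distinct one-dimensional summands $U_j := K v_j \subseteq V(n)$ satisfy the requirements of Lemma~\ref{lem:QSchur}. The lemma then delivers a splitting
\begin{equation*}
V(n)^{\otimes d} \simeq \cH_q(\Sigma_d) \oplus Q
\end{equation*}
of right $A$-modules, which shows that $T := V(n)^{\otimes d}$ is a faithful $A$-module.

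With both hypotheses of Theorem~\ref{thm:SchurEquiv} verified (taking $S = S^A(T) = S_q(n,d)$), I conclude $\End_{S_q(n,d)}(T) \cong \cH_q(\Sigma_d)$, the desired double centralizer property. The splitting also produces, via the discussion in Section~\ref{sec:SF}, an idempotent $e \in S_q(n,d)$ with $T \simeq S_q(n,d)\, e$ and $e\, S_q(n,d)\, e \cong \cH_q(\Sigma_d)$, yielding the Schur functor $e(-) : \Mod(S_q(n,d)) \to \Mod(\cH_q(\Sigma_d))$. The only thing that might pose as an obstacle is verifying the scattered conditions \eqref{def:wr1}--\eqref{def:br5}, but these are tautologies in this scalar base-algebra setting, so the corollary is truly immediate.
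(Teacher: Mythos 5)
Your proposal is correct and follows essentially the same route as the paper: the paper likewise obtains this statement by specializing the general corollary after Lemma~\ref{lem:QSchur}, using Proposition~\ref{rmk:Haction}(a) for the action on $V(n)^{\otimes d}$, the hypothesis $n\geq d$ to supply the $d$ distinct summands required by Lemma~\ref{lem:QSchur}, and Proposition~\ref{prop:struc}(b) (trivially satisfied for $B=K$) to invoke Theorem~\ref{thm:SchurEquiv}. Your verification of the degenerate conditions \eqref{def:wr1}--\eqref{def:br5} and the identification $S^A(T)=S_q(n,d)$ match the paper's treatment.
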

\vskip .25cm

\subsubsection{Evseev-Kleshchev's Wreath Product Algebras}
Let $B$ be an arbitrary superalgebra, and let $R = 1\otimes 1, S = 0, \sigma: a\otimes b \mapsto b\otimes a$, and $\rho = 0$. 
So $A = B \wr \cH(d)$ is the super wreath product algebra $W_d^B = B^{\otimes d} \rtimes K \Sigma_d$ in \cite{EK17}.
{Note that $V_B^{\otimes d}$ admits an $A$-action by Proposition~\ref{rmk:Haction}(c)}.
Let $n\in\NN$, and let $V_B = \bigoplus_{i=1}^n B$ as a right $B$-supermodule in the usual way. 
If $n \geq d$, then the assumptions in Lemma~\ref{lem:QSchur} hold.
In this case, $U_i$'s are just those distinct summands in $V_B$.

Thus,  $S^A((B^{\otimes n})^{\otimes d})$ coincides with the Evseev-Kleshchev's generalized Schur algebra $S^B(n,d) := \End_{W^B_d}((B^{\otimes n})^{\otimes d})$, 
and hence we cover the double centralizer property result in \cite{EK17}.
\begin{cor}
If $n\geq d$, then the Schur duality holds between the super wreath product algebra $W^B_d$ and its corresponding Schur algebra $S^B(n,d)$.
{Moreover, a Schur functor exists.}
\end{cor}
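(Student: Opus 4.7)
The plan is to mirror the proof of the preceding corollary for Hecke algebras of type A, invoking Lemma~\ref{lem:QSchur} and Theorem~\ref{thm:SchurEquiv} in tandem. The parameter choice $R = 1\otimes 1$, $S = 0$, $\sigma$ the flip, $\rho = 0$ trivially satisfies the simplified conditions \eqref{eq:simQ}, so \eqref{def:wr1}--\eqref{def:br5} all hold and hypothesis \eqref{eq:SWcond} is in force; Proposition~\ref{rmk:Haction}(b) then supplies a right $A$-action on the permutation subspace $W(\mu) \subseteq V_B^{\otimes d}$ for any strictly increasing multi-index $\mu \in [n]^d$.

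The first task is to extend this action from $W(\mu)$ to the ambient $V_B^{\otimes d}$ in the manner dictated by the semidirect-product structure $W_d^B = B^{\otimes d} \rtimes K\Sigma_d$: on the natural decomposition $V_B^{\otimes d} = \bigoplus_{\mu \in [n]^d} B^{\otimes d}$, the tensor factor acts by right multiplication on each summand, and each $H_i$ acts as a signed place permutation with the appropriate Koszul sign (fixing the scalar $r = \pm 1$ in \eqref{eq:vTiA2} at coincident indices, consistent with $r^2 = \epsilon(R) = 1$). This is essentially the Evseev--Kleshchev action of $W_d^B$ on $(B^{\oplus n})^{\otimes d}$ from \cite{EK17}. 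With the action in hand, for $n \geq d$ I would choose any $\mu_1 < \cdots < \mu_d$ and apply Lemma~\ref{lem:QSchur} taking $U_j$ to be the $\mu_j$-th direct summand of $V_B$ (isomorphic to $B$ via the evident map); this yields the right $A$-module splitting $V_B^{\otimes d} \cong A \oplus Q$, and in particular the faithfulness of $V_B^{\otimes d}$.

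Assuming as usual that $B$ is a finite-dimensional symmetric superalgebra (the standard setting in Evseev--Kleshchev), the self-injectivity of $A$ follows at once from Proposition~\ref{prop:struc}(b), whose hypotheses ($\Tr\circ\sigma = \Tr$, $R$ invertible, $\rho = 0$) are immediate. Theorem~\ref{thm:SchurEquiv} then delivers $\End_{S^B(n,d)}(V_B^{\otimes d}) \cong W_d^B$, establishing the duality, and the Schur functor $\Mod(S^B(n,d)) \to \Mod(W_d^B)$ exists via the splitting by the discussion in Section~\ref{sec:SF}. The only genuine technicality is the bookkeeping of Koszul signs when extending the action to $V_B^{\otimes d}$, but this is routine once the semidirect-product structure is fixed; the remainder is a direct application of the general machinery already in place.
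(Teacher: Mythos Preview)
Your proposal is correct and follows essentially the same route as the paper: invoke Proposition~\ref{rmk:Haction}(b) (the paper's reference to part ``(c)'' is a typo) to obtain the $A$-action, apply Lemma~\ref{lem:QSchur} with the $U_j$'s taken as the distinct summands of $V_B = B^{\oplus n}$, and then feed the resulting splitting into Theorem~\ref{thm:SchurEquiv} and Section~\ref{sec:SF}. You are in fact more careful than the paper in one respect: you make explicit the assumption that $B$ is a finite-dimensional symmetric (super)algebra, which is needed for Proposition~\ref{prop:struc}(b) and hence for Theorem~\ref{thm:SchurEquiv} to apply, whereas the paper leaves this implicit in its phrase ``arbitrary superalgebra.''
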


\subsection{A Splitting Lemma, II}\label{sec:SchurA2}
{While in Section~\ref{sec:SchurA} we demonstrate an application of Theorem~\ref{thm:SchurEquiv} via a construction of the splitting over the quantum wreath product $B \wr \cH(d)$ from a splitting over the base algebra $B$, in this section we provide a new application of Theorem~\ref{thm:SchurEquiv} via a different construction of the splitting.
To be precise, given an algebra $A'$, we can obtain the desired splitting over $A'$ as long as we have a splitting over a larger algebra $A$ which contains $A'$ as a self-injective subalgebra.

\begin{lemma} \label{lem:split}
Let $A$ be an associative algebra, and let $A' \subseteq A$ be a self-injective subalgebra. 
Suppose that one has a splitting of a right $A$-module $T$ as in \eqref{E:splitW}. 
Then, one has a splitting $T \cong A' \oplus Q'$, as right $A'$-modules for some $Q'$.
\end{lemma}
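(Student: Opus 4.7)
The plan is to start from the given splitting $T \cong A \oplus Q$ of right $A$-modules, restrict scalars along the inclusion $A' \hookrightarrow A$, and then split off a copy of $A'$ from the right $A'$-module $A$ using self-injectivity.

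First, by restriction of scalars, the isomorphism $T \cong A \oplus Q$ becomes an isomorphism of right $A'$-modules. It remains to exhibit $A$ itself as a direct sum $A \cong A' \oplus C$ of right $A'$-modules, for then one simply sets $Q' := C \oplus Q$.

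The key step is to observe that the inclusion $\iota: A' \hookrightarrow A$ is a monomorphism of right $A'$-modules (since $A'$ is closed under right multiplication by its own elements). Because $A'$ is self-injective, $A'$ is injective as a right $A'$-module over itself. Applying the injectivity property to the diagram
\[
\begin{tikzcd}
0 \arrow[r] & A' \arrow[r,"\iota"] \arrow[d,"\mathrm{id}"'] & A \arrow[dl, dashed, "\pi"] \\
& A' &
\end{tikzcd}
\]
produces a right $A'$-module retraction $\pi: A \to A'$ with $\pi \circ \iota = \mathrm{id}_{A'}$. Setting $C := \ker \pi$ gives the desired splitting $A = \iota(A') \oplus C \cong A' \oplus C$ of right $A'$-modules.

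Combining this with the restricted splitting from the first step yields
\[
T \cong A \oplus Q \cong A' \oplus C \oplus Q = A' \oplus Q'
\]
as right $A'$-modules, with $Q' := C \oplus Q$. There is no real obstacle here: the argument is essentially a two-line consequence of the definition of self-injectivity (every monomorphism out of the regular module splits), and restriction of scalars preserves the original direct-sum decomposition. The only subtlety is keeping straight the side of the action, namely that we use the right-module version of self-injectivity for $A'$ throughout.
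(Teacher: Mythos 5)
Your proposal is correct and follows essentially the same route as the paper: restrict the splitting $T\cong A\oplus Q$ to $A'$, use self-injectivity of $A'$ to split $A'$ off from $A$ as a right $A'$-module, and combine. The only difference is that you spell out the retraction coming from injectivity of the regular module, which the paper leaves implicit.
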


\begin{proof}
Note that $A'$ is a right $A'$-submodule of $A$. By the self-injectivity of $A'$, the right $A'$-module $A'$ is a direct summand of $A$. The result follows from the fact that the right $A$-module $A$ is a direct summand of $T$.
\end{proof}
}

\subsubsection{Hu Algebras}

Following Section~\ref{ex:HHS},
let $B = \cH_q(\Sigma_m)$ be generated by $T_1, \dots, T_{m-1}$,
and let $\cA(m) = B\wr \cH(2)$ the Hu algebra generated by $B^{\otimes 2}$ and an Hecke-like generator $H$.
{Next, we will apply Lemma~\ref{lem:split} to the case $A' := \cA(m) \subseteq A := \cH_q(\Sigma_{2m})$.}

Let $V(n)$ be the $K$-span of $v_1, \dots, v_n$, and let $T = T(n, 2 m) := V(n)^{\otimes 2m}$. 
{
The Hecke algebra $\cH_q(\Sigma_{2m})$ (and hence the Hu algebra $\cA(m)$) acts on $T$ by \eqref{eq:vHi}.
Therefore, we obtain the following diagram regarding the relationship between the $q$-Schur algebra $S_q(n,2m)$ and the wreath Schur algebra $S^{\cA(m)}(T)$.
\eq
\Ba{{cccccccc}
S_q(n,2m)&\crr& T(n,2m) &\crl& \cH_{q}(\Sigma_{2m})
\\
\dsubseteq &&||&&\usubseteq
\\
S^{\cA(m)}(T) &\crr& V(n)^{\otimes 2m}& \crl& \cA(m)
}.
\endeq
}

\begin{cor}
If $n\geq 2m$, then the Schur duality holds between the Hu algebra $\cA(m)$ and its corresponding Schur algebra $S^{\cA(m)}(T(n,2m))$.
{Moreover, a Schur functor exists.}
\end{cor}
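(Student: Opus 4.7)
The plan is to apply Theorem~\ref{thm:SchurEquiv} to $A' = \cA(m)$ together with $T = T(n,2m)$, and to produce the required splitting of $T$ as a right $\cA(m)$-module by pulling back the known splitting over the larger algebra $\cH_q(\Sigma_{2m})$ via Lemma~\ref{lem:split}. Concretely, I would proceed in three steps.

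First, I would establish that $\cA(m)$ is self-injective. This is where I would invoke Proposition~\ref{prop:struc}(b) applied to the realization $\cA(m) = \cH_q(\Sigma_m) \wr \cH(2)$ given in Section~\ref{ex:HHS}. The base algebra $B = \cH_q(\Sigma_m)$ is symmetric with its standard trace. With the parameters $R = z_{m,m}$, $S = 0$, $\sigma$ the flip, and $\rho = 0$, the conditions of Proposition~\ref{prop:struc}(b) all hold: $R$ is invertible provided $f_{2m}(q) \neq 0$ (which is part of the running assumption, via Proposition~\ref{prop:Hu}(b)); $\rho = 0$; and the invariance $\Tr(\sigma(b)) = \Tr(b)$ for $b \in B \otimes B$ follows because $\Tr$ factors as the product of traces on the tensor factors and $\sigma$ merely swaps them. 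Hence $\cA(m)$ is symmetric, and in particular self-injective.

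Second, I would produce the splitting of $T$ as a right $\cA(m)$-module. By the quantum Schur-Weyl duality for type $A$ already established in the preceding corollary, the assumption $n \geq 2m$ yields a splitting $T \cong \cH_q(\Sigma_{2m}) \oplus Q$ of right $\cH_q(\Sigma_{2m})$-modules. Since $\cA(m) \subseteq \cH_q(\Sigma_{2m})$ is a self-injective subalgebra by the previous step, Lemma~\ref{lem:split} immediately upgrades this to a splitting $T \cong \cA(m) \oplus Q'$ of right $\cA(m)$-modules.

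Third, I would verify faithfulness and conclude. The splitting just established embeds $\cA(m)$ into $T$ as a right $\cA(m)$-summand; since $\cA(m)$ acts faithfully on itself by right multiplication, it acts faithfully on $T$. Both hypotheses of Theorem~\ref{thm:SchurEquiv} are then satisfied (finite dimensionality of $\cA(m)$ comes from Proposition~\ref{prop:Hu}(a)), so with $S = S^{\cA(m)}(T) = \End_{\cA(m)}(T)$ one obtains the double centralizer $\End_S(T) \cong \cA(m)$, which is the Schur duality asserted. The existence of a Schur functor $\cF \colon \Mod(S) \to \Mod(\cA(m))$ then follows from the general discussion in Section~\ref{sec:SF}, since the splitting $T \cong \cA(m) \oplus Q'$ is exactly the input needed to produce the idempotent $e \in S$ with $eSe \cong \cA(m)$. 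The only nontrivial step is the verification of self-injectivity of $\cA(m)$, but this is handled cleanly by the symmetric structure theorem Proposition~\ref{prop:struc}(b); nothing else in the argument requires more than quoting results already proved in the paper.
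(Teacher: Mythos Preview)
Your proposal is correct and follows essentially the same approach as the paper: establish that $\cA(m)$ is symmetric (hence self-injective) via Proposition~\ref{prop:struc}(b), pull back the type~A splitting of $T$ over $\cH_q(\Sigma_{2m})$ to a splitting over $\cA(m)$ via Lemma~\ref{lem:split}, and conclude using Theorem~\ref{thm:SchurEquiv} and the discussion in Section~\ref{sec:SF}. If anything, you are slightly more explicit than the paper in checking the hypotheses of Proposition~\ref{prop:struc}(b) and in noting faithfulness.
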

\proof
{Recall that  $A := \cH_q(\Sigma_{2m}) \supseteq A' := \cA(m) \cong B \wr \cH(2)$ where $B = \cH_q(\Sigma_m)$ is symmetric, $\sigma$ is the flip map, $\rho = 0$, and $R = z_m$ is invertible.
Thus, the Hu algebra is symmetric thanks to Lemma~\ref{prop:struc}(b), and hence $A'$ is a self-injective subalgebra of $A$.

When $n \geq 2 m$, one has a splitting $T \cong A \oplus Q$ as $A$-modules. 
Then, by Lemma \ref{lem:split}, on obtains another splitting $T \cong \cA(m) \oplus Q'$ as $\cA(m)$-modules. 
Therefore, we can apply Theorem~\ref{thm:SchurEquiv} and then obtain the desired Schur duality. 
The existence of a Schur functor follows from the discussion in Section \ref{sec:SF}.
\endproof
}

\subsection{Related Schur-Weyl Dualities}
In this section we consider Schur-Weyl dualities in which the corresponding base algebras are infinite-dimensional. In this case 
the notion of symmetric algebra is not defined.
In turns out that we can still construct a natural $A$-action to afford the splitting \eqref{E:splitW}, 
and in turn state the most natural double centralizer property which awaits for a uniform proof.

We also include a proof of Schur duality for the Ariki-Koike algebras using Theorem~\ref{thm:SchurEquiv}.
Note that our splitting lemma does not apply here.
\subsubsection{Affine Hecke Algebras}\label{sec:AHASchur}
Following Section~\ref{ex:AHA}, we have $B= K[X^{\pm1}]$ and $A = B\wr \cH(d)$ is isomorphic to the extended affine Hecke algebra $\cH_q^\ext(\Sigma_d)$ of type A. 
Let $n\in\NN$, and let $V_B = \widehat{V}(n)$ be the space spanned by $\{v_i ~|~ i\in \ZZ\}$ on which $X^{\pm1}$ acts by
\eq
v_i.X = v_{i+n},
\quad
v_i.X\inv = v_{i-n}. 
\endeq 
If $n \geq d$, then the assumptions in Lemma~\ref{lem:QSchur} hold.
In this case, $U_j := v_j.B (1\leq j \leq d)$ are indeed distinct summands.

One can extend the right $A$-action to the entire tensor space $T = V_B^{\otimes d}$ via \eqref{eq:vHi}.
Then $S^A(T)$ coincides with the affine $q$-Schur algebra of type A.
The corresponding Schur-Weyl duality is proved in \cite{CP94, Gr99} as below:
\begin{prop}[\cite{CP94, Gr99}]
If $n\geq d$, then the Schur duality holds between the affine Hecke algebra $\cH^\ext_q(\Sigma_{d})$ and its corresponding affine $q$-Schur algebra $S^\aff_q(n,d) := \End_{\cH^\ext_q(\Sigma_d)}(\widehat{V}(n)^{\otimes d})$.
\end{prop}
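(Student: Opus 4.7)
The plan is to combine Lemma~\ref{lem:QSchur} with the splitting framework of Section~\ref{sec:SF}. Note that Theorem~\ref{thm:SchurEquiv} does not apply directly because $A := \cH^\ext_q(\Sigma_d)$ is infinite dimensional; nevertheless, Lemma~\ref{lem:QSchur} itself makes no finiteness assumption on $A$, and the discussion of Section~\ref{sec:SF} shows that any splitting $T \cong A \oplus Q$ of right $A$-modules already produces an idempotent $e \in S := \End_A(T) = S^\aff_q(n,d)$ with $T \cong Se$ as left $S$-modules, whence $\End_S(T) \cong A$ as desired.

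The first task is to check that formula~\eqref{eq:vHi}, interpreted for every $\mu \in \ZZ^d$, together with the action $v_\mu \cdot X^{(k)} := v_{\mu + n\epsilon_k}$ of $B^{\otimes d} = K[X_1^{\pm 1}, \ldots, X_d^{\pm 1}]$, defines a right $A$-module structure on $T = \widehat{V}(n)^{\otimes d}$. The braid and quadratic relations can be verified by case analysis on two and three consecutive components of $\mu$, exactly along the lines of the proof of Proposition~\ref{rmk:Haction}(a). The wreath relations reduce via Corollary~\ref{cor:BL} to the local identity $H_i X^{(i)} H_i = q X^{(i+1)}$ on $T$, which I would check directly on the basis $\{v_\mu\}$ by splitting into the cases $\mu_i < \mu_{i+1}$, $\mu_i = \mu_{i+1}$, and $\mu_i > \mu_{i+1}$ using~\eqref{eq:vHi}.

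Once the $A$-action is in place, the second task is to invoke Lemma~\ref{lem:QSchur}. As a right $B$-module, $\widehat{V}(n)$ decomposes as $\bigoplus_{j=1}^{n} v_j B$, a direct sum of $n$ mutually distinct rank-one free $B$-summands with isomorphisms $v_j b \mapsto b$. The hypothesis $n \ge d$ then permits the choice $\mu = (1, 2, \ldots, d)$ and $U_j := v_j B$, and Lemma~\ref{lem:QSchur} delivers the splitting $T \cong A \oplus Q$ of right $A$-modules. The double centralizer property then follows from the framework recalled in the first paragraph.

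The principal obstacle is the first task, since Proposition~\ref{rmk:Haction} does not cover the affine setting where $B$ acts on $V_B$ nontrivially rather than through a counit. Particular care will be needed in the wreath-relation check when two components of $\mu$ agree modulo $n$ but differ as integers, because in such scenarios the $H_i$-action interacts nontrivially with the $X^{(k)}$-shifts by $n$ that now appear.
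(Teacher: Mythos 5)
Your argument has a genuine gap at its final step, and it is worth noting that the paper does not actually prove this proposition with its own machinery: it quotes the result from \cite{CP94, Gr99}, and the preamble to that subsection explicitly concedes that for infinite-dimensional base algebras one can ``construct a natural $A$-action to afford the splitting \eqref{E:splitW}, and in turn \emph{state} the most natural double centralizer property which \emph{awaits for a uniform proof}.'' The gap is your claim that a splitting $T \cong A \oplus Q$ ``already produces \dots\ $T \cong Se$ as left $S$-modules, whence $\End_S(T) \cong A$.'' Section~\ref{sec:SF} only gives $\End_S(T) \cong eSe$; the splitting additionally makes $T$ a faithful $A$-module, so the natural map $A \to \End_S(T)$ is injective, but its \emph{surjectivity} is exactly the double centralizer property, and that is the content of Theorem~\ref{thm:SchurEquiv} (via K\"onig--Slung\aa rd--Xi), which requires $A$ to be finite-dimensional and self-injective. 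For $A = \cH_q^\ext(\Sigma_d)$ neither hypothesis is available, and no substitute argument is supplied; this is precisely why the paper falls back on the computational proofs of Chari--Pressley and Green rather than its splitting framework.

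Two smaller issues. First, Lemma~\ref{lem:QSchur} is stated under hypothesis \eqref{eq:SWcond}, which includes $\rho = 0$; for the affine Hecke algebra the parameters \eqref{eq:RRAff} have $\rho \neq 0$, so even the splitting step requires either an extension of the lemma or a direct verification that $v_\mu \mapsto v_\mu \cdot a$ still identifies $W(\mu)$ with $A$ (the wreath relation now contributes $\rho$-terms that move $v_\mu$ within the span of the $U_{w(1)}\otimes\cdots\otimes U_{w(d)}$, so the identification is no longer the naive place-permutation one). Second, your worry about components of $\mu$ agreeing modulo $n$ is well placed but is resolved by taking $W(\mu)$ for $\mu = (1,\dots,d)$ with $n \ge d$, where all indices stay in distinct residue classes; the harder point remains that the splitting, once obtained, does not by itself close the duality.
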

\vskip .25cm
\subsubsection{Ariki-Koike Algebras}
Consider  $F = \frac{{K[X]}}{\prod_{i=1}^m (X- q_i)}$, where $q_i$'s are parameters as $v_i$ in Section~\ref{ex:AK}. 
Let $n\in \NN$, and let $V_F=V(m,n)$ be the $F$-module with basis $\{v_1, \dots, v_{mn}\}$, on which the $F$-action is given by, for $1\leq j \leq m, 0 \leq k \leq n-1$,
\eq \label{eq:AKV}
v_{km+j}. X = \begin{cases}
v_{km+j+1} &\tif  j < m; 
\\
\sum_{i=1}^{m} (-1)^{i-1} e_i(q_1, q_2, \dots, q_m) v_{km+m-i}&\tif j=m,
\end{cases}
\endeq
where $e_i$'s are the elementary symmetric functions that appear in rewriting $\prod_{i=1}^m (X-q_i) =0$ into $X^m = \sum_{i=1}^{m} (-1)^{i-1} e_i(q_1, \dots, q_m) X^{m-i}$.

Next, we define an $A$-module $T$ such that $T = V_F^{\otimes d}$ as a vector space.
Each $H_i (1\leq i \leq d-1)$ acts by \eqref{eq:vHi},
and each generator $\={X^{(i)}} \in A$ acts on the right by $H_{i-1}\dots H_1 X^{(1)} H_1 \dots H_{i-1}$, where
$X^{(1)}$ acts on the first tensor factor by \eqref{eq:AKV}.

If $n \geq d$, then the splitting as in \eqref{E:splitW} exists, since the span of $\{v_\mu ~|~ 1\leq \mu_i \leq md\}$ is a direct summand of $T$ that is isomorphic to $A$ as a right $A$-module.
It is known that $A$ is symmetric (and hence self-injective) by \cite[Theorem~5.1]{MM98},
and thus, by Theorem~\ref{thm:SchurEquiv}, we cover the results on Schur duality in \cite{Gr97, BW18, BWW18}.
Moreover, when $m > 2$, $S^A(T)$ is a new Schur algebra for the Ariki-Koike algebra that has the double centralizer property. 
See \cite{SS99, JM00} for attempts to obtain such a Schur duality.

\begin{cor}
If $n\geq d$, then the Schur duality holds between the Ariki-Koike algebra $\cH_{q, q_1, \dots, q_m}(C_m \wr \Sigma_{d})$ and its corresponding Schur algebra $S^A(V(m,n)^{\otimes d})$.
\end{cor}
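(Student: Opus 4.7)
The plan is to follow the blueprint already sketched in the paragraph preceding the corollary: build a natural right $A$-module structure on $T = V(m,n)^{\otimes d}$ which realizes $A = \cH_{q,q_1,\dots,q_m}(C_m \wr \Sigma_d)$ as a direct summand of $T$, and then invoke Theorem~\ref{thm:SchurEquiv} using the fact that Ariki-Koike algebras are self-injective (symmetric by Malle-Mathas).

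First, I would verify carefully that the assignments on $V(m,n)^{\otimes d}$ given by \eqref{eq:vHi} for the $H_i$ and by $v_\mu.\overline{X^{(i)}} := v_\mu.H_{i-1}\cdots H_1 X^{(1)} H_1 \cdots H_{i-1}$ indeed extend to a right action of $A$. Since $A$ is a quotient of $K[X^{\pm1}]\wr\cH(d)$ by the cyclotomic ideal $\langle \prod_{i=1}^m(X-q_i)\otimes 1^{\otimes d-1}\rangle$, and since the $H_i$-action on $T$ coincides with the standard one used in Section~\ref{sec:AHASchur} for the extended affine Hecke algebra (once one notes that $v_k.X$ defined by \eqref{eq:AKV} satisfies $\prod_{i=1}^m(X-q_i) = 0$ on each basis vector), the relations of $K[X^{\pm1}]\wr\cH(d)$ are automatically satisfied, and the cyclotomic relation holds by the very construction of $V(m,n)$.

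Second, I would construct the splitting \eqref{E:splitW}. Let $W\subseteq T$ be the span of $\{v_\mu : 1\leq \mu_i \leq md\}$. The key observation is that the $d$ summands of $V(m,n)$ corresponding to indices in the blocks $\{1,\ldots,m\}, \{m+1,\ldots,2m\},\ldots,\{(d-1)m+1,\ldots,dm\}$ are each, by \eqref{eq:AKV}, free of rank one over $F = K[X]/\prod_i(X-q_i)$. Hence, taking $\mu = (1, m+1, 2m+1,\ldots,(d-1)m+1)$, the submodule $W = W(\mu)$ generated under the $H_i$'s from $v_\mu$ consists of exactly the vectors listed, and the assignment $v_\mu.a \mapsto a$ defines a right $A$-module isomorphism $W \xrightarrow{\sim} A$ by a direct analog of the proof of Lemma~\ref{lem:QSchur}: the $d$ chosen summands are distinct and each is $B$-isomorphic to $B = F$, so the Jucys-Murphy generators $\overline{X^{(i)}}$ separate the subspace indexed by $w\in\Sigma_d$ from the remaining tensor-factor data. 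Since the tensor product of direct summands is a direct summand of the tensor product, $W$ is a direct summand of $T$, yielding $T \cong A \oplus Q$ as right $A$-modules.

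Third, the hypotheses of Theorem~\ref{thm:SchurEquiv} are now in place: by \cite[Theorem~5.1]{MM98}, $A$ is symmetric and in particular self-injective; and the splitting $T \cong A \oplus Q$ shows that $T$ is a faithful $A$-module. Therefore $\End_{S^A(T)}(T) \cong A$, which is the double centralizer property, and by the discussion in Section~\ref{sec:SF} the Schur functor $\mathcal F : \Mod(S^A(T)) \to \Mod(A)$ exists.

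The main obstacle I expect is the second step, namely verifying rigorously that $W$ is a direct summand of $T$ isomorphic to $A$ as a right $A$-module. Establishing that it is an $A$-submodule requires checking that the cyclotomic action of $\overline{X^{(i)}}$ preserves $W$, and the isomorphism $W\cong A$ requires identifying the orbit of $v_\mu$ under $A$ with a basis of size $|\Sigma_d|\cdot m^d = \dim A$. Unlike the situation in Lemma~\ref{lem:QSchur}, where one works with $B$-summands, here $F$ is only a quotient of the base ring $B = K[X^{\pm1}]$, so one must be careful that the injectivity argument from that lemma still goes through after passing to the cyclotomic quotient; this is where the choice of $\mu$ with representatives from distinct blocks $\{(i-1)m+1,\ldots,im\}$ is crucial.
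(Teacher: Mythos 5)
Your proposal follows essentially the same route as the paper: the same action of $A$ on $T=V(m,n)^{\otimes d}$ via \eqref{eq:vHi} and the Jucys--Murphy elements, the same splitting obtained from the block structure of $V(m,n)\cong F^{\oplus n}$ (the paper likewise notes that Lemma~\ref{lem:QSchur} does not apply directly), and the same appeal to the Malle--Mathas symmetry of $A$ together with Theorem~\ref{thm:SchurEquiv}. One small point: the copy of $A$ inside $T$ is the cyclic summand generated by $v_{(1,m+1,\dots,(d-1)m+1)}$, i.e.\ $\bigoplus_{w\in\Sigma_d}U_{w(1)}\otimes\cdots\otimes U_{w(d)}$ of dimension $m^d\,d!$, rather than the full span of all $v_\mu$ with $1\leq\mu_i\leq md$, but your choice of $\mu$ with representatives in distinct blocks is exactly the right one.
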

\vskip .25cm
\subsubsection{Affine Hecke Algebras of Type C}
Following Section \ref{ex:AHABC}, let {$F = \cH_q(\Sigma^\aff_2)$ be generated by $X,Y$ modulo the quadratic relations $X^2 = (\xi\inv - \xi)X + 1$ and $Y^2 = (\eta\inv -\eta)Y +1$ for parameters $\xi, \eta \in K^{\times}$. 
Let $V_F = \widetilde{V}(n)$ be the space spanned by $\{v_i ~|~ i\in \ZZ\}$, on which $F$ acts by
\eq\label{eq:AHACV}
v_i. X = \begin{cases}
v_{-i} &\tif j > 0; 
\\
v_{-i} + (\xi\inv - \xi) v_i &\tif j <0,
\end{cases}
\quad
v_i. Y = \begin{cases}
v_{2(n+1)-i} &\tif j < n+1; 
\\
v_{2(n+1)-i} + (\eta\inv - \eta) v_i &\tif j > n+1.
\end{cases}
\endeq
For the right $A$-module structure on $T = {V}_F^{\otimes d}$, $H_i$'s act by \eqref{eq:vHi}, 
 $\={X^{(i)}}, \={Y^{(d-i+1)}} \in A$ act on the right by $H_{i-1}\dots H_1 X^{(1)} H_1 \dots H_{i-1}$ and $H_{d-i} \dots H_{d-1}Y^{(d)}H_{d-1} \dots H_{d-i}$, respectively, where
$X^{(1)}$ and $Y^{(d)}$ act on the first and last tensor factors, respectively, by \eqref{eq:AHACV}.
}
Then $S^A(T)$ coincides with the affine $q$-Schur algebra of type C.
The corresponding Schur-Weyl duality, after a renormalization, is proved in  \cite{FLLLWW20} as below:
\begin{prop}[ \cite{FLLLWW20}]
If $n\geq d$, then the Schur duality holds between the affine Hecke algebra $\cH_{q,\xi,\eta}(C^\aff_{d})$ and its corresponding affine $q$-Schur algebra $\widehat{S}^{\mathfrak{c}}_{q,\xi,\eta}(n,d) := \End_{\cH_{q,\xi,\eta}(C^\aff_{d})}(\widetilde{V}(n)^{\otimes d})$.
\end{prop}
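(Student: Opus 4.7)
The plan is to follow the same strategy as in the affine type A case of Section~\ref{sec:AHASchur}, adapted to the type C setting where the base algebra $F=\cH_q(\Sigma^\aff_2)$ is infinite-dimensional.

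First, I would verify that the formulas \eqref{eq:AHACV} together with \eqref{eq:vHi} extend to a well-defined right $A$-module structure on $T=\widetilde{V}(n)^{\otimes d}$ for $A=\cH_{q,\xi,\eta}(C^\aff_d)$. This amounts to checking three families of relations: (a)~the quadratic relations $X^2=(\xi\inv-\xi)X+1$ and $Y^2=(\eta\inv-\eta)Y+1$ hold on each $v_i$, which reduces to a short case analysis according to the sign of $i$ and of $i-(n+1)$; (b)~the type A braid and quadratic relations among the $H_i$'s on $T$, which are handled as in Proposition~\ref{rmk:Haction} with the counit $\epsilon$ determined by $\epsilon(S)=q\inv-q$ and $\epsilon(R)=1$; and (c)~the affine Bernstein--Lusztig cross relations between $H_i$ and the two boundary generators $\overline{X^{(1)}}$, $\overline{Y^{(d)}}$, which reduce to a rank-two computation on the first two and last two tensor factors and then propagate by conjugation using $\overline{X^{(i)}}=H_{i-1}\cdots H_1 X^{(1)} H_1\cdots H_{i-1}$ and its $Y$-analogue.

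Second, I would establish a splitting $T\cong A\oplus Q$ of right $A$-modules when $n\geq d$, in the spirit of Lemma~\ref{lem:QSchur}. The key input is that the $F$-cyclic submodules $U_j:=v_j\cdot F\subseteq\widetilde{V}(n)$ for $j=1,\ldots,n$ are pairwise distinct direct summands whose span exhausts $\widetilde{V}(n)$ (a consequence of the orbit decomposition of $\ZZ$ under the rank-one type C affine Weyl group, whose fundamental domain is essentially $\{1,\ldots,n\}$). Fixing the strictly increasing tuple $\mu=(1,2,\ldots,d)$, the submodule of $T$ generated by $v_\mu$ under the $A$-action should be shown to be isomorphic to $A$ via $a\mapsto v_\mu\cdot a$; injectivity rests on Theorem~\ref{thm:basis} applied to the wreath product $K[X^{\pm1},Y^{\pm1}]\wr\cH(d)$ together with the distinctness of the tensor factors $U_{w(\mu_1)}\otimes\cdots\otimes U_{w(\mu_d)}$, and the splitting follows by choosing a direct complement compatible with the tensor decomposition of $T$.

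Third, I would conclude the double centralizer property $\End_{\widehat{S}^{\mathfrak{c}}_{q,\xi,\eta}(n,d)}(T)\cong A$. This is where the main obstacle lies: since $A$ is infinite-dimensional, Theorem~\ref{thm:SchurEquiv} is not directly available, and the paper explicitly flags this as ``awaiting a uniform proof''. Two reasonable routes present themselves. One is to filter $A$ by finite-dimensional two-sided quotients (for instance, those corresponding to bounded-weight subrepresentations of $T$) so that Theorem~\ref{thm:SchurEquiv} applies at each finite stage, and then pass to the limit via a compactness/continuity argument. The other is to leverage the explicit verification of \cite{FLLLWW20}: the faithfulness of $T$ as an $A$-module is immediate from the splitting, and the remaining content---that every $\widehat{S}^{\mathfrak{c}}$-endomorphism of $T$ is induced by an element of $A$---is precisely the density statement proved there via an explicit Bernstein-type presentation of $\widehat{S}^{\mathfrak{c}}_{q,\xi,\eta}(n,d)$. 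The hardest step is this final density/closure argument in the infinite-dimensional setting; the first two steps are essentially bookkeeping on explicit formulas once the quantum wreath framework is set up.
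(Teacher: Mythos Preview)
The paper does not supply its own proof of this proposition. It is stated as a result quoted from \cite{FLLLWW20}; the surrounding text says explicitly that for these infinite-dimensional base algebras ``the notion of symmetric algebra is not defined'' and that the double centralizer property here ``awaits for a uniform proof''. So there is nothing in the paper to compare your argument against beyond the citation itself, and your third paragraph already acknowledges this: you correctly diagnose that Theorem~\ref{thm:SchurEquiv} is unavailable because $A$ is infinite-dimensional, and you fall back on \cite{FLLLWW20} for the density step.

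Your outline goes further than the paper attempts, which is fine, but step~2 has a real gap if taken literally. Lemma~\ref{lem:QSchur} is stated under hypothesis \eqref{eq:SWcond}, which includes $\rho=0$; in the type~C quantum wreath presentation \eqref{eq:RRAffC} one has $\rho\neq 0$, so the lemma does not apply as written and ``in the spirit of'' is carrying genuine weight. You would need to rework the argument that $W(\mu)\cong A$ without that hypothesis, or else work directly with the quotient algebra $F=\cH_q(\Sigma^\aff_2)$ and verify by hand that the cyclic $F$-modules $U_j=v_j\cdot F$ for $1\le j\le n$ are free of rank one and pairwise distinct summands (this is plausible via the free action of the infinite dihedral group on the orbit of $j$, but it is not something the paper establishes). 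The filtration-and-limit route you propose for step~3 is reasonable in spirit but is exactly the kind of argument the authors flag as not yet available in their framework.
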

\vskip .25cm
\subsubsection{Degenerate Hecke Algebras of Type A}
Following Section~\ref{ex:dAHA}, we have $B= K[X]$ and $A = B\wr \cH(d)$ is isomorphic to the degenerate affine Hecke algebra $\cH^\Deg(\Sigma_d)$ of type A. 
Let $n\in\NN$, and let $V_B = \={V}(n)$ be the space spanned by $\{v_i ~|~ i\in \ZZ_{\geq 1} \}$ on which $X$ acts by
$v_i.X = v_{i+n}$.
If $n \geq d$, then the assumptions in Lemma~\ref{lem:QSchur} hold.
In this case, $U_j := v_j.B (1\leq j \leq d)$ are indeed distinct summands.
One can extend the right $A$-action to the entire tensor space $T = V_B^{\otimes d}$ via \eqref{eq:vHi}.
Then $S^A(T)$ seems to be  a new $q$-Schur algebra for the degenerate affine Hecke algebra.

See \cite{BK08} for a related cyclotomic version.
Note that our approach does not recover the Schur duality functor as in \cite{AS98} since their algebra $\cH^\Deg(\Sigma_d)$ acts on a larger module $M \otimes V^{\otimes d}$.
\subsection{Connections to Quantum Groups}
In this article, we only discuss a Schur duality between a quantum wreath product and its corresponding Schur algebra;
while in literature, there is usually a third object that surjects onto the Schur algebra.
For example, it is the Drinfeld-Jimbo quantum group that surjects onto the $q$-Schur algebras of type A.

A standard procedure due to Beilinson-Lusztig-MacPherson can be used to produce these higher level algebras from the given families of Schur algebras.
See \cite{BLM90} for constructing the quantum group $U_q(\mathfrak{gl_n})$ from $\cH_q(\Sigma_d)$, 
and \cite{BKLW18, LL21} for a realization of equal or multi-parameter $\imath$quantum groups $\imath U(\fgl_n)$ from Hecke algebra of type B/C. 
Furthermore, \cite{FLLLW23} explains how to obtain the affine $\imath$quantum groups of type C using affine Hecke algebra of type C.
It is still open whether one can generalize this approach to obtain the quantum group analogs for other examples in Section~\ref{sec:examples}, e.g., the Ariki-Koike algebras, the Hu algebras, or the degenerate affine Hecke algebras. 
\appendix

\section{Examples of Quantum Wreath Product Algebras} \label{sec:app}
\subsection{Affine Hecke Algebras}\label{sec:AHA}
We first consider the extended affine Hecke algebra $\cH^\ext_q(\Sigma_d)$ for $\GL_d$. 
Recall that $P = \sum_{i=1}^d \ZZ \epsilon_i$ is the weight lattice for $\GL_d$, and $\alpha_i := \epsilon_i - \epsilon_{i+1}$ is our choice of positive simple roots.
The algebra $\cH^\ext_q(\Sigma_d)$ is generated by $T_1, \dots, T_{d-1}$ and $Y^\lambda (\lambda \in P)$ subject to the Bernstein-Lusztig relation, for $\lambda \in P, 1\leq i \leq d-1$: 
\eq\label{eq:BLrel}
T_i Y^\lambda=Y^{s_i(\lambda)} T_i + (q-1) \frac{Y^\lambda - Y^{s_i(\lambda)}}{1- Y^{-\alpha_i}},
\endeq 
as well as the relations below:
\eq
\begin{split}
Y^\lambda Y^\mu = Y^{\lambda + \mu}, \quad
&
T_i T_{i+1} T_i = T_{i+1}T_i T_{i+1},
\quad
T^2_i = (q-1) T_i + q,
\\
&T_i T_j = T_j T_i \quad (|i-j|>1).
\end{split}
\endeq
We remark that the non-extended affine Hecke algebra $\cH^\aff_q(\Sigma) := \cH_q(\Sigma^\aff_d)$ of type A is the Hecke algebra  for the Coxeter group $\Sigma^\aff_d$.
Furthermore, $\cH^\aff_q(\Sigma_d)$ can be embedded into $\cH^\ext_q(\Sigma_d)$ via
\eq
T_0 \mapsto Y^{\epsilon_1-\epsilon_d} T_{d-1 \to 1 \to d-1},
\quad
T_i \mapsto T_i (i>0).
\endeq

\subsection{Degenerate Affine Hecke Algebras}\label{sec:dAHA}
The degenerate affine Hecke algebra of type A is the algebra $\cH^\Deg(d)$ generated by $s_1, \dots, s_{d-1}$ and the polynomial algebra $K[x_1, \dots, x_d]$ subject to the relations below,
for $1\leq k \leq d-2, 1\leq i \leq d-1, |j-i|\geq 2$:
\eq
s_k s_{k+ 1} s_k = s_{k+ 1} s_k s_{k + 1},  \quad s_i s_j = s_j s_i,
\endeq 
as well as the following cross relations:
\eq\label{eq:sxrel}
s_i x_i = x_{i+1} s_i  -1, 
\quad
s_i x_{i+1} = x_{i} s_i  + 1,
\quad s_i x_j = x_j s_i \quad (j \neq i, i+1).
\endeq

\subsection{Nil Hecke Rings}\label{sec:NHR}
Kostant-Kumar's nil Hecke ring \cite{KK86} (or, affine nil Hecke algebra) of type A is the algebra $N\cH(d)$ generated by $\partial_1, \dots, \partial_{d-1}$ and the polynomial algebra $K[x_1, \dots, x_d]$ subject to the relations below,
for $1\leq k \leq d-2, 1\leq i \leq d-1, |j-i|\geq 2$:
\eq
\begin{split}
&\partial_i^2=0, 
\quad
\partial_k \partial_{k+ 1} \partial_k = \partial_{k+ 1} \partial_k \partial_{k + 1},  \quad \partial_i \partial_j = \partial_j \partial_i,
\\
&\partial_i x_i = x_{i+1} \partial_i  -1, 
\quad
\partial_i x_{i+1} = x_{i} \partial_i  + 1,
\quad \partial_i x_j = x_j \partial_i \quad (j \neq i, i+1).
\end{split}
\endeq

\subsection{Ariki-Koike Algebras}\label{sec:AK}
The Ariki-Koike algebra (i.e., cyclotomic Hecke algebra), denoted by $\cH_{m,d}$, is a deformation of  $G(m,1,d) = C_m \wr \Sigma_d$ of parameters $(q_1, \ldots , q_m, q)$ over a Laurent polynomial $F = K[q_1^{\pm1} , \ldots, q_m^{\pm1}, q^{\pm1}]$
generated by $X, T_1, \ldots T_{d-1}$ subject to  the type A braid relations for $T_1, \dots, T_{d-1}$, as well as the  the following relations, for $1\leq i < d$:
\eq
\prod_{1\leq j \leq m}(X- q_j)
= 0 =
(T_i+1)(T_i-q).
\endeq
Note that the specialization of  $\cH_{m,d}$ at $q_i = \xi^i, q=1$ is the group algebra $KG(m,1,d)$ for $\xi$ a primitive $m$th root of unity.

\subsection{Rosso-Savage Algebras}\label{def:RS}
\subsubsection{Affine Wreath Product Algebra}
Wan-Wang's wreath Hecke algebras is a special case of  Savage's affine wreath product algebra $\cA_d(B)$.
Following \cite{Sa20}, here $B$ is an $\NN$-graded Frobenius superalgebra with homogeneous parity-preserving linear trace map $\Tr:B\to K$.
Denote its Nakayama automorphism by $\psi:B\to B$, i.e., 
\eq
\Tr(xy) =(-1)^{\={x}\={y}} \Tr(y \psi(x)) \quad x,y \in B.
\endeq
Assume that $B$ has a basis $I$ consisting of homogeneous elements, and let
 $\{x^\vee~|~x \in I\}$ be its dual basis in the sense that $\Tr(x^\vee y) = \delta_{x,y}$ for all $x, y \in I$.

For $x\in B$, denote by $x^{(i)}$ the element in $B^{\otimes d}$ such that all tensor factors are identities except that the $i$th factor is $x$. 
Next, for $b = b_1 \otimes \dots \otimes b_d \in B^{\otimes d}$, let $\psi_i(b) = \psi(b_i)^{(i)} \in B^{\otimes d}$.
Furthermore, define 
$t_{i,j} := \sum_{x \in I} x^{(i)} (x^\vee)^{(j)} \in B^{\otimes d}$ for all $i,j$.

The algebra $\cA_d(B)$ is generated by $b = b_1 \otimes  \dots \otimes b_d \in B^{\otimes d}, L_1,\dots, L_d, T_1, \dots, T_{d-1}$ subject to the relations below, for $1\leq i \leq d-1, 1\leq j,k \leq d$:
\eq
\begin{split}
&L_k L_j = L_j L_k, \quad L_i \psi_i(b) = b L_i,
\\
&T_i^2 = 1, \quad  T_i T_{i+ 1} T_i = T_{i+ 1} T_i T_{i + 1},  \quad T_i T_j = T_j T_i \quad (|j-i|>1),
\\
&T_i b = s_i(b) T_i, \quad T_iL_i = L_{i+1}T_i - t_{i,i+1}.
\end{split}
\endeq

\subsubsection{Quantum Wreath Algebra (Frobenius Hecke Algebra)}
Let $z\in K$.
Following \cite{RS20}, the Frobenius Hecke algebra (or, quantum wreath algebra) $H_d(B,z)$ is generated by 
$b = b_1 \otimes  \dots \otimes b_d \in B^{\otimes d}, T_1, \dots, T_{d-1}$ subject to the relations below, for $1\leq i, j \leq d-1$:
\eq
\begin{split}
&T_i^2 = z t_{i,i+1} T_i + 1, 
\\
&T_i T_{i+ 1} T_i = T_{i+ 1} T_i T_{i + 1},  \quad T_i T_j = T_j T_i \quad (|j-i|>1),
\\
&T_i b = s_i(b) T_i.
\end{split}
\endeq
\subsubsection{Quantum Affine Wreath Algebra (Affine Frobenius Hecke Algebra)}
Let $z\in K$.
Following \cite{RS20}, the affine Frobenius Hecke algebra (or, quantum affine wreath algebra) $H^\aff_d(B,z)$ 
is generated by $b = b_1 \otimes  \dots \otimes b_d \in B^{\otimes d}, X^{\pm}_1,\dots, X^{\pm}_d, T_1, \dots, T_{d-1}$ subject to the relations below, for $1\leq i \leq d-1, 1\leq j,k \leq d$:
\eq
\begin{split}
&X^{\pm}_k X^{\pm}_j = X^{\pm}_j X^{\pm}_k, \quad X_i b = b X_i,
\\
&T_i^2 = 1, \quad  T_i T_{i+ 1} T_i = T_{i+ 1} T_i T_{i + 1},  \quad T_i T_j = T_j T_i \quad (|j-i|>1),
\\
&T_i b = s_i(b) T_i, \quad T_i X_i T_i = X_{i+1}, \quad T_i X_j = X_j T_i \quad (|j-i|>1).
\end{split}
\endeq

\subsection{Affine Zigzag Algebras} \label{def:KM}
Let $\Gamma$ be a connected Dynkin diagram of finite type ADE, and let $\={\Gamma}$ be its corresponding double quiver with vertex set $I$ and edge set $E = \{ a_{i,j} ~|~ i \neq j \in I\}$.
Let $Z_K = Z_K(\Gamma)$ be the zigzag algebra over $K$ be the quotient of the path algebra $K \={\Gamma}$ subject to the following relations:
\begin{enumerate}
\item Any path of length $\geq 3$ is zero;
\item Any path of length $2$ is zero, unless it is a cycle;
\item All cycles of length 2 at the same node are equal.
\end{enumerate}
In other words, $Z_K$ has a basis $\{e_i, a_{i,j}, c_i ~|~ j\neq i \in I\}$ such that $e_i$'s are the length-0 paths at $i$, $c_i$'s are the length-2 cycles at $i$ (which are identified as long as they have the same base vertex).
The identity in $Z_K$ is $1_Z = \sum_{i\in I} e_i$.
We can paraphrase the definition (see \cite{KM19}) of the affine zigzag algebra $Z^\aff_d(\Gamma)$ as follows. Let $Z^\aff_d(\Gamma)$ be generated by $Z_K^{\otimes d}, z_1, \dots z_d, T_1, \dots, T_{d-1}$ subject to the relations below, 
for $1\leq i \leq d-1, 1\leq j,k \leq d, \gamma \in Z_K^{\otimes d}$:
\eq
\begin{split}
&z_k z_j = z_j z_k, \quad z_i \gamma = \gamma z_i,
\\
&T_i^2 = 1, \quad  T_i T_{i+ 1} T_i = T_{i+ 1} T_i T_{i + 1},  \quad T_i T_j = T_j T_i \quad (|j-i|>1),
\\
&T_i \gamma = s_i(\gamma) T_i, 
\\ 
&T_i z_j e_\lambda = z_{s_i(j)} e_{s_i(\lambda)}T_i + \begin{cases}
(\delta_{i,j} - \delta_{i+1,j})(c_j + c_{j+1})e_\lambda &\tif \lambda_i = \lambda_{i+1};
\\
(\delta_{i,j} - \delta_{i+1,j})(a_{\lambda_{i+1}, \lambda_{i}} \otimes a_{\lambda_{i}, \lambda_{i+1}})_{{i}}e_\lambda &\tif a_{\lambda_i, \lambda_{i+1}}\in E;
\\
0 &\textup{otherwise,}
\end{cases}
\end{split}
\endeq
where $e_\lambda = e_{\lambda_1} \otimes \dots \otimes e_{\lambda_d}$ for $\lambda = (\lambda_i)_i \in I^d$ 
and $\Sigma_d$ acts on $ Z_K^{\otimes d}$ by place permutations.
\section{Calculations for the Basis Theorem} \label{sec:loopproof}

\subsection{Proof of Proposition~\ref{prop:nec}} 
The proof will entail a term-by-term comparison via the basis $\{(b_1\otimes \dots \otimes b_d) H_w\}$. 
{For the proofs within this appendix, it is enough to check local relations in either $B\otimes B$ or $B^{\otimes 3}$. The general proofs follow by using the embedding into $B^{\otimes d}$ for an arbitrary $d$.}

Equation \eqref{def:wr1} follows from the fact that $H(1\otimes 1) = \sigma(1\otimes 1) H + \rho(1\otimes 1) = (1\otimes 1) H$.
Equations \eqref{def:wr2}--\eqref{def:TTT} follow from the fact that $H(ab) = \sigma(ab) H + \rho(ab)$, while
\eq
(Ha)b = \sigma(a) H b+\rho(a)b = \sigma(a)\sigma(b)H+\sigma(a)\rho(b) + \rho(a)b;
\endeq
and the fact that $(H^2)H = (SH+R)H = (S^2+ R)H +SR$, while
\eq
H(H^2)
= (\sigma(S)H + \rho(S))H + \sigma(R)H + \rho(R)
= (\sigma(S)S+\rho(S)+\sigma(R))H +\sigma(S)R {+ \rho(R)}.
\endeq
Equation \eqref{def:qu1} follows from the fact that $H^2a = (SH+R)a = S(\sigma(a)H + \rho(a)) + Ra$; while $H(Ha)$ is equal to
\eq
\begin{split}
&H(\sigma(a)H + \rho(a)) = (H\sigma(a))H + H\rho(a) 
= (\sigma^2(a)H+\rho\sigma(a))H + \sigma\rho(a)H + \rho^2(a)
\\
&~= (\sigma^2(a){S} + \rho\sigma(a) + \sigma\rho(a))H + \rho^2(a)+\sigma^2(a){R}.
\end{split}
\endeq
{Assume that $d\geq 3$ from now on.}
Equations \eqref{def:br1}--\eqref{def:br3} follow from combining 
\eq\label{eq:br1-3}
\begin{split}
H_1H_2H_1a 
&= \sigma_1\sigma_2\sigma_1(a)H_1H_2H_1 +  \rho_1\sigma_2\sigma_1(a)H_2H_1
+ \sigma_1\rho_2\sigma_1(a)(S_{1}H_{1}+R_{1})+  \rho_1\rho_2\sigma_1(a)H_1
\\
&+ \sigma_1\sigma_2\rho_1(a)H_1H_2 +  \rho_1\sigma_2\rho_1(a)H_2
+ \sigma_1\rho_2\rho_1(a)H_1+  \rho_1\rho_2\rho_1(a),
\end{split}
\endeq
with a similar formula for $H_2H_1H_2a$ using symmetry. 
For \eqref{def:br4}-\eqref{def:br5}, one uses $H_1(H_1H_2H_1) = S_1 H_1H_2 H_1 + R_1 H_2 H_1$; while
\eq
\begin{split}
H_2H_1H_2^2 &=  \sigma_2\sigma_1(S_2)H_2H_1H_2 +  \sigma_2\sigma_1(R_2)H_2H_1
+ \rho_2\sigma_1(S_2)H_1H_2 +  \rho_2\sigma_1(R_2)H_1
\\
&+ \sigma_2\rho_1(S_2)S_2H_2 +  \sigma_2\rho_1(S_2)R_2
+ \rho_2\rho_1(S_2)H_2+  \rho_2\rho_1(R_2).
\end{split}
\endeq

\subsection{Proof of Proposition~\ref{prop:loop}} In this section, we use the postfix notation for maps of the form $f_a$ and $T_i$ with $a\in B^{\otimes d}$. It is the same postfix notation as what we used in 
 \eqref{eq:postfix}. Note that in the subscripts of the $f_a$'s, we still use the regular prefix composition and evaluation for $\sigma_i$ and $\rho_i$.

In the following, we will use the fact that $f_a + f_b = f_{a+b}$ for all $a, b\in B^{\otimes d}$ (see Proposition~\ref{prop:Klinearity}) without referring to it.
For simplicity, we abbreviate $1^{\otimes d}$ by $1$.
When we refer to Condition \hyperlink{eq:Rl}{$R[\ell]$}, we will use the following equivalent form:
\eq
	(1\otimes w) \cdot f_{a} = (1 \otimes w s_i) \cdot ( f_{\sigma_i(a)} T_{i} + f_{\rho_i(a)})
	\quad
	(\textup{for all }w \textup{ with }
	\ell(w s_i) \lt \ell(w) \leq \ell).
\endeq
Finally, it suffices to prove each part of Proposition~\ref{prop:loop} by considering the {postfix evaluation at }  $1 \otimes w$ for every $w \in \Sigma_d$ with $\ell(w) \leq \ell$.

\begin{proof}[Proof of Part (W)]
First, consider the case when $\ell(w) <  \ell(ws_i)$. We have
\eq
  (1 \otimes w) \cdot T_if_a 
  =   (1 \otimes ws_i) \cdot f_a= (1 \otimes w) \cdot(f_{\sigma_i(a)} T_i + f_{\rho_i(a)}). 
\quad \text{ by \hyperlink{eq:Rl}{$R[\ell + 1]$}}
\endeq
For the other case when $\ell(w) >  \ell(ws_i)$, note that $(1 \otimes w)\cdot  T_i f_a =  (1 \otimes ws_i) \cdot T_i^2f_a$.
As elements in $\Hom_{B^{\otimes d}}(V^{\ell-1}, V)
$, 
\begin{allowdisplaybreaks}
\begin{align}
T_i^2f_a& 
	= f_{S_i}T_i f_a +  f_{R_i}f_a  \ \ \ \text{ by \hyperlink{eq:Ql}{$Q[\ell - 1]$}} \\
\notag 
= & f_{S_i\sigma_i(a)} T_i 
	+ f_{ S_i\rho_i(a)} 
	+ f_{R_i a}  \ \ \ \text{ by \hyperlink{eq:Wl}{$W[\ell - 1]$}, \hyperlink{eq:Ml}{$M[\ell - 1]$}} \\
\notag 
= &	(	f_{\sigma_i^2(a)} f_{S_i}
		+ f_{\rho_i\sigma_i(a)} 
		+ f_{\sigma_i\rho_i(a)}
	)T_i 
	+ f_{\sigma_i^2(a)} f_{R_i} 
	+ f_{\rho_i^2(a)}  \ \ \ \text{ by \eqref{def:qu1}, \hyperlink{eq:Ml}{$M[\ell - 1]$}} \\
\notag
= & (f_{\sigma_i^2(a)} T_i 
	+ f_{\rho_i\sigma_i(a)} 
	+ f_{\sigma_i\rho_i(a)}
	)T_i 
	+ f_{\rho_i^2(a)}  \ \ \ \text{ by \hyperlink{eq:Ql}{$Q[\ell - 1]$}} \\
\notag
= & T_i f_{\sigma_i(a)} T_i 
	+ T_i f_{\rho_i(a)}   \ \ \ \text{ by \hyperlink{eq:Wl}{$W[\ell - 1]$}}.
\end{align}	
\end{allowdisplaybreaks}
Therefore, $(1 \otimes w)\cdot  T_i f_a 
=  (1 \otimes ws_i) \cdot  (T_i f_{\sigma_i(a)} T_i + T_i f_{\rho_i(a)}) =  (1 \otimes w) \cdot (f_{\sigma_i(a)} T_i  + f_{\rho_i(a)})$.
\end{proof}
\begin{proof}[Proof of Part (M)]
The case when $w=1$ follows immediately from the construction.  
We may now assume that $w > 1$. Then, there exists $s_i$ such that $\ell(w) >  \ell(ws_i)$.
Note first $(1 \otimes w)\cdot f_{a} f_b   =  (1 \otimes ws_i) \cdot T_i f_a f_b$ and
$(1 \otimes w)\cdot f_{ab} = (1 \otimes ws_i) \cdot T_i f_{ab}$.
We are done since the equalities below hold in $\Hom_{B^{\otimes d}}(V^{\ell-1}, V)
$:
\begin{allowdisplaybreaks}
\begin{align}
T_i f_a f_b& 	
	= f_{\sigma(a)} T_i f_b + f_{\rho_i(a)} f_b \ \ \ \text{ by \hyperlink{eq:Wl}{$W[\ell - 1]$}} \\
\notag &
	=  f_{\sigma_i(a) \sigma_i(b)}T_i + f_{\sigma_i(a) \rho_i(b)} + f_{\rho_i(a)b} \ \ \ \text{ by \hyperlink{eq:Wl}{$W[\ell - 1]$}, \hyperlink{eq:Ml}{$M[\ell - 1]$}} \\
\notag &
	=  f_{\sigma_i(ab)}T_i + f_{\rho_i(ab)} \ \ \ \text{ by \eqref{def:wr2}} \\
\notag &
	= T_i f_{ab}. \ \ \ \text{ by \hyperlink{eq:Wl}{$W[\ell - 1]$}}
\end{align}
\end{allowdisplaybreaks}
\end{proof}
\begin{proof}[Proof of Part (Q)]
The case when $\ell(w) < \ell(w s_i)$ follows immediately from construction.
For $\ell(w) > \ell(w s_i)$, it suffices to show that $T_i^3 = T_i  f_{S_i} T_i + T_if_{R_i}$ in $\Hom_{B^{\otimes d}}(V^{\ell-1}, V)
$.
Indeed,
\begin{allowdisplaybreaks}
\begin{align}
T_i^3& 
	=  f_{S_i} T_i^2  + f_{R_i} T_i  \ \ \ \text{ by \hyperlink{eq:Ql}{$Q[\ell - 1]$}} \\
\notag&
	=  (f_{S_i^2} + f_{R_i})T_i + f_{S_i R_i}  \ \ \ \text{ by \hyperlink{eq:Ql}{$Q[\ell - 1]$}, \hyperlink{eq:Ml}{$M[\ell - 1]$}} \\
\notag&
	=  ( f_{\sigma_i(S_i)}f_{S_i} + f_{\rho_i(S_i)} + f_{\sigma_i(R_i)} 
		)T_i 
	 + f_{\rho_i(R_i)} +   f_{\sigma_i(S_i)}f_{R_i}  \ \ \ \text{ by \eqref{def:TTT}, \hyperlink{eq:Ml}{$M[\ell - 1]$}} \\
\notag&
	=  f_{\sigma_i(S_i)} T_i^2 +  f_{\rho_i(S_i)} T_i +  f_{\sigma_i(R_i)}T_i + f_{\rho_i(R_i)}  \ \ \ \text{ by \hyperlink{eq:Ql}{$Q[\ell - 1]$}} \\
\notag&
	= T_i  f_{S_i}  T_i + T_i  f_{R_i}.  \ \ \ \text{ by \hyperlink{eq:Wl}{$W[\ell - 1]$}} 
\end{align}
\end{allowdisplaybreaks}
\end{proof}
\begin{proof}[Proof of Part (B2)]
We first consider $w$ with $\ell(w) \lt \ell(w s_i)$ and $\ell(w) \lt \ell(w s_j)$. Then
\eq
 ( 1 \otimes w) \cdot T_i  T_j 
=  1\otimes w s_i s_j 
=  1\otimes w s_j s_i  
=  ( 1 \otimes w) \cdot T_j  T_i.
\endeq
By symmetry in $i$ and $j$, it remains to prove (B2) for the case $\ell(w) > \ell(w s_i)$. Since $1 \otimes w = (1 \otimes w s_i) \cdot T_i$, it suffices to prove in $\Hom_{B^{\otimes d}}(V^{\ell-1}, V)
$ that $T_i T_j T_i = T_i^2 T_j$.

\begin{allowdisplaybreaks}
\begin{align}
T_i T_j T_i = & T_j T_i^2   \ \ \ \text{ by \hyperlink{eq:B2l}{$B_2[\ell -  1]$}} \\
\notag = & T_j f_{S_i}T_i  + T_jf_{R_i}  \ \ \ \text{ by \hyperlink{eq:Ql}{$Q[\ell]$}} \\
\notag = & f_{\sigma_j(S_i)} T_j T_i +  f_{\rho_j(S_i)} T_i +  f_{\sigma_j(R_i)} T_j + f_{\rho_j(R_i)}  \ \ \ \text{ by \hyperlink{eq:Wl}{$W[\ell - 1]$}} \\
\notag = & f_{S_i} T_j T_i +  f_{R_i} T_j  \ \ \ \text{ by \eqref{def:wr1} and ${|i-j|>1}$} \\
\notag = & f_{S_i} T_i T_j + f_{R_i} T_j   \ \ \ \text{ by \hyperlink{eq:B2l}{$B_2[\ell -  1]$}} \\
\notag = & T_i^2 T_j.  \ \ \ \text{ by \hyperlink{eq:Ql}{$Q[\ell - 1]$}} 
\end{align}
\end{allowdisplaybreaks}

\end{proof}
\begin{proof}[Proof of Part (B3)]
The proof for the case with $\ell(w) \lt \ell(ws_i)$ and $\ell(w) \lt \ell(w s_j)$ is almost identical to the proof of Part (B2), and hence is omitted.
By symmetry in $i$ and $j$, it suffices to prove (B3) for the case $\ell(w) > \ell(w s_i)$. Since $1 \otimes w = (1 \otimes w s_i) \cdot T_i$, it suffices to check $T_i T_j  T_i  T_j   = T_i^2  T_j  T_i$ in $\Hom_{B^{\otimes d}}(V^{\ell-1}, V)
$.
\begin{allowdisplaybreaks}
\begin{align}
T_i T_j  T_i  T_j  
= & T_j   T_i  T_j^2   \ \ \ \text{ by \hyperlink{eq:B3l}{$B_3[\ell - 1]$}} \\
\notag
= & T_j T_i f_{S_j} T_j +   T_j  T_i f_{R_j}  \ \ \ \text{ by \hyperlink{eq:Ql}{$Q[\ell + 1]$}} \\
\notag
= & T_j    f_{\sigma_i(S_j)} T_i T_j 
	+ T_j  f_{\rho_i(S_j)}  T_j 
	+ T_j  f_{\sigma_i(R_j)}  T_i 
	+ T_j f_{\rho_i(R_j)}   \ \ \ \text{ by \hyperlink{eq:Wl}{$W[\ell]$}} \\
\notag
= & f_{\sigma_j\sigma_i(S_j)}  T_j  T_i  T_j  
	+ f_{\rho_j\sigma_i(S_j)} T_i  T_j  
	+ f_{\sigma_j\rho_i(S_j)} T_j^2   
	+ f_{\rho_j\rho_i(S_j)} T_j   \\
\notag&~
	+  f_{\sigma_j\sigma_i(R_j)} T_j T_i  
	+   f_{\rho_j\sigma_i(R_j)} T_i
	+   f_{\sigma_j\rho_i(R_j)} T_j
	+ f_{\rho_j\rho_i(R_j)}  \ \ \ \text{ by \hyperlink{eq:Wl}{$W[\ell - 1]$}} \\
\notag
= &   f_{\sigma_j\sigma_i(S_j)} T_j  T_i  T_j
	+  f_{\rho_j\sigma_i(S_j)} T_i  T_j
	+  f_{\sigma_j\sigma_i(R_j)} T_j  T_i
	+ (f_{\sigma_j\rho_i(S_j) S_j} 
	+ f_{\rho_j\rho_i(S_j)} 
	+ f_{\sigma_j\rho_i(R_j)}
		)T_j \\
\notag&~
	+ f_{\rho_j\sigma_i(R_j)} T_i
	+ f_{\sigma_j\rho_i(S_j) R_j} 
	+ f_{\rho_j\rho_i(R_j)}  \ \ \ \text{ by \hyperlink{eq:Ql}{$Q[\ell - 1]$}, \hyperlink{eq:Ml}{$M[\ell - 1]$}} \\
\notag
= &   f_{\sigma_j\sigma_i(S_j)} T_i  T_j  T_i
	+  f_{\rho_j\sigma_i(S_j)} T_i  T_j
	+   f_{\sigma_j\sigma_i(R_j)} T_j  T_i
	+   (f_{\sigma_j\rho_i(S_j) S_j} 
	+ f_{\rho_j\rho_i(S_j)} 
	+ f_{\sigma_j\rho_i(R_j)}
	)T_j \\
\notag &
+   f_{\rho_j\sigma_i(R_j)} T_i
	+ f_{\sigma_j\rho_i(S_j) R_j} 
	+ f_{\rho_j\rho_i(R_j)}  \ \ \ \text{ by \hyperlink{eq:B3l}{$B_3[\ell - 1]$}} \\
\notag = & f_{S_i}T_i  T_j  T_i +  f_{R_i}T_j  T_i  \ \ \ \text{ by \eqref{def:br4} and \eqref{def:br5}} \\
\notag = & T_i ^2 T_j  T_i.  \ \ \ \text{ by \hyperlink{eq:Ql}{$Q[\ell - 1]$}} 
\end{align}
\end{allowdisplaybreaks}
\end{proof}
\begin{proof}[Proof of Part (R)]
{Every pair of reduced expressions of $w$ differs by a chain of braid relations. Therefore, it suffices to prove that, for two reduced expressions $r, r'$ of $w$ such that $r$ and $r'$ differ by a single braid relation,
\eq \label{eq:rex}
(1 \otimes w s_i) \cdot (f_{\sigma_i(a)} T_i + f_{\rho_i(a)})
=
(1 \otimes w s_j) \cdot (f_{\sigma_j(a)} T_j + f_{\rho_j(a)}),
\endeq
where $s_i$ (resp. $s_j$) is the last simple generator of $r$ (resp. $r'$).
Below, the left-hand side and the right-hand side of \eqref{eq:rex} are denoted by LHS and RHS, respectively.

Equation \eqref{eq:rex} holds trivially if $s_i = s_j$. Now, assume that $s_i \neq s_j$, hence there exists $x \in \Sigma_d$ such that $w = x \beta_{i, j} = x \beta_{j, i}$ with $\ell(w) = \ell(x) + \ell(\beta_{i, j})$, where
\eq
\beta_{i,j} = \begin{cases}
s_j s_i s_j &\tif |i-j| =1;
\\
s_i s_j &\tif |i-j| >1.
\end{cases} 
\endeq

Note that in following, we must avoid using \hyperlink{eq:Wl}{$W[\ell - 1]$}, since otherwise a circular argument will occur in the Grand Loop Argument.}
Consider first the case that $\ell(\beta_{i, j}) = 2$.
\eq
\begin{split}
\textup{LHS} &~ = (1 \otimes x s_j) \cdot ({f_{\sigma_i(a)} T_i} + f_{\rho_i(a)})   \\
&~
= ( 1 \otimes x) \cdot ({f_{\sigma_j\sigma_i(a)} T_j T_i 
	+ f_{\rho_j\sigma_i(a)} T_i
	+ f_{\sigma_j\rho_i(a)} T_j}
	+ f_{\rho_j\rho_i(a)})  \ \ \ \text{ by \hyperlink{eq:Rl}{$R[\ell - 1]$}} \\
&~
= ( 1 \otimes x)\cdot ({f_{\sigma_i\sigma_j(a)} T_i T_j  
	+ f_{\sigma_i\rho_j(a)} T_i
	+ f_{\rho_i\sigma_j(a)} T_j}
	+ f_{\rho_i\rho_j(a)})   \ \ \ \text{ by \hyperlink{eq:B2l}{$B_2[\ell - 2]$}} \\
&~
= ( 1 \otimes xs_i) \cdot (T_j f_{\sigma_j(a)} + f_{\rho_j(a)}) 
=  \textup{RHS}.
\end{split}
\endeq

Finally, consider the case that $\ell(\beta_{i, j}) = 3$.
Note that
\eq\label{eq:LHSR3}
\begin{split}
\textup{LHS} = & (1 \otimes x s_i s_j) \cdot (f_{\sigma_i(a)} T_i + f_{\rho_i(a)})   \\
= & ( 1 \otimes x s_i) \cdot ({f_{\sigma_j\sigma_i(a)} T_j T_i 
	+ f_{\rho_j\sigma_i(a)} T_i
	+ f_{\sigma_j\rho_i(a)} T_j}
	+ f_{\rho_j\rho_i(a)})  \ \ \ \text{ by \hyperlink{eq:Rl}{$R[\ell - 1]$}} \\
= & ( 1 \otimes x)\cdot (
	{f_{\sigma_i\sigma_j\sigma_i(a)} T_i  T_j  T_i   
	+ f_{\rho_i\sigma_j\sigma_i(a)} T_j  T_i   
	+  f_{\sigma_i\rho_j\sigma_i(a)}  T_i^2
	+ f_{\sigma_i\sigma_j\rho_i(a)} T_i  T_j } \\
&~~
	+ {(f_{\rho_i\rho_j\sigma_i(a)} + f_{\sigma_i\rho_j\rho_i(a)}) T_i  
	+ f_{\rho_i\sigma_j\rho_i(a)}  T_j } 
	+ f_{\rho_i\rho_j\rho_i(a)}),  \ \ \ \text{ by \hyperlink{eq:Rl}{$R[\ell - 2]$}} \\
\end{split}
\endeq
and similarly,
\eq\label{eq:RHSR3}
\begin{split}
\textup{RHS} = & (1 \otimes x s_j s_i) \cdot (f_{\sigma_j(a)} T_j + f_{\rho_j(a)})   \\
= & (1 \otimes x s_j) \cdot (f_{\sigma_i\sigma_j(a)} T_i T_j 
	+ f_{\rho_i\sigma_j(a)} T_j
	+ f_{\sigma_i\rho_j(a)} T_i
	+ f_{\rho_i\rho_j(a)})  \ \ \ \text{ by \hyperlink{eq:Rl}{$R[\ell - 1]$}} \\
= & ( 1 \otimes x)\cdot (
	{f_{\sigma_j\sigma_i\sigma_j(a)} T_j  T_i  T_j   
	+ f_{\rho_j\sigma_i\sigma_j(a)} T_i  T_j   
	+  f_{\sigma_j\rho_i\sigma_j(a)}  T_j^2
	+ f_{\sigma_j\sigma_i\rho_j(a)} T_j  T_i } \\
&~~
	+{ (f_{\rho_j\rho_i\sigma_j(a)} + f_{\sigma_j\rho_i\rho_j(a)}) T_j  
	+ f_{\rho_j\sigma_i\rho_j(a)}  T_i  
	+ f_{\rho_j\rho_i\rho_i(a)}}).  \ \ \ \text{ by \hyperlink{eq:Rl}{$R[\ell - 2]$}} \\
\end{split}
\endeq
The proposition follows by showing the following identity in $\Hom_{B^{\otimes d}}(V^{\ell - 3}, V)
$:
\begin{allowdisplaybreaks}
\begin{align}
&	f_{\sigma_i\sigma_j\sigma_i(a)} T_i  T_j  T_i   
	+ f_{\rho_i\sigma_j\sigma_i(a)} T_j  T_i   
	+  f_{\sigma_i\rho_j\sigma_i(a)}  T_i^2
	+ f_{\sigma_i\sigma_j\rho_i(a)} T_i  T_j  
	\\
\notag&	+ (f_{\rho_i\rho_j\sigma_i(a)} + f_{\sigma_i\rho_j\rho_i(a)}) T_i  
	+ f_{\rho_i\sigma_j\rho_i(a)}  T_j  
	+ f_{\rho_i\rho_j\rho_i(a)} 
	\\
\notag&\quad= f_{\sigma_i\sigma_j\sigma_i(a)} T_i  T_j  T_i   
	+ f_{\rho_i\sigma_j\sigma_i(a)} T_j  T_i   
	+ f_{\sigma_i\sigma_j\rho_i(a)} T_i  T_j
	+ (f_{\sigma_i\rho_j\sigma_i(a) S_i} + f_{\rho_i\rho_j\sigma_i(a)} + f_{\sigma_i\rho_j\rho_i(a)}) T_i  \\
\notag&\quad\quad
	+ f_{\rho_i\sigma_j\rho_i(a)}  T_j  
	+ (f_{\rho_i\rho_j\rho_i(a)} + f_{\sigma_i\rho_j\sigma_i(a) R_i})	\ \ \ \text{ by \hyperlink{eq:Ql}{$Q[\ell - 3]$}, \hyperlink{eq:Ml}{$M[\ell - 3]$}} \\
\notag&\quad= f_{\sigma_j\sigma_i\sigma_j(a)} T_i  T_j  T_i   
	+ f_{\rho_j\sigma_i\sigma_j(a)} T_i  T_j   
	+ f_{\sigma_j\sigma_i\rho_j(a)} T_j  T_i
	+ (f_{\sigma_j\rho_i\sigma_j(a) S_j} + f_{\rho_j\rho_i\sigma_j(a)} + f_{\sigma_j\rho_i\rho_j(a)}) T_j  \\
\notag&\quad\quad
	+ f_{\rho_j\sigma_i\rho_j(a)}  T_i  
	+ (f_{\rho_j\rho_i\rho_j(a)} + f_{\sigma_j\rho_i\sigma_j(a) R_j}) \ \ \ \text{ by \eqref{def:br1}, \eqref{def:br2}, \eqref{def:br3}} \\
\notag&\quad=
	f_{\sigma_j\sigma_i\sigma_j(a)} T_j  T_i  T_j   
	+ f_{\rho_j\sigma_i\sigma_j(a)} T_i  T_j   
	+  f_{\sigma_j\rho_i\sigma_j(a)}  T_j^2
	+ f_{\sigma_j\sigma_i\rho_j(a)} T_j  T_i  \\
\notag&\quad\quad
	+ f_{\rho_j\sigma_i\rho_j(a)}  T_i  
	+ (f_{\rho_j\rho_i\rho_j(a)} + f_{\sigma_j\rho_i\sigma_j(a) R_j}) \ \ \ \text{ by \hyperlink{eq:B3l}{$B_3[\ell - 3]$}} \\
\notag&\quad=
	f_{\sigma_j\sigma_i\sigma_j(a)} T_j  T_i  T_j   
	+ f_{\rho_j\sigma_i\sigma_j(a)} T_i  T_j   
	+  f_{\sigma_j\rho_i\sigma_j(a)}  T_j^2
	+ f_{\sigma_j\sigma_i\rho_j(a)} T_j  T_i  \\
\notag&\quad\quad
	+ (f_{\rho_j\rho_i\sigma_j(a)} + f_{\sigma_j\rho_i\rho_j(a)}) T_j  
	+ f_{\rho_j\sigma_i\rho_j(a)}  T_i  
	+ f_{\rho_j\rho_i\rho_k(a)}.	
	\ \ \ \text{ by \hyperlink{eq:Ql}{$Q[\ell - 3]$}, \hyperlink{eq:Ml}{$M[\ell - 3]$}} 
\end{align}
\end{allowdisplaybreaks}
\end{proof}


\begin{thebibliography}{FL$^3$W$^2$20~}\frenchspacing

\bibitem[AK94]{AK94}
S.~Ariki, K.~Koike,
{\em A Hecke algebra of $(\mathbb{Z}/r\mathbb{Z})\wr S_n$ and construction of Its irreducible representations},
Adv. in Math., {\sf 106}, no.2, (1994), 216--243.

%

\bibitem[AS98]{AS98} 
T. Arakawa, T. Suzuki, 
{\em Duality between $\mathfrak{sl}_n(\mathbb{C})$ and the degenerate affine Hecke algebra}, 
J. Algebra, {\sf 209}, (1998), 288--304.





\bibitem[BK08]{BK08} 
J. Brundan, A. Kleshchev,  
{\em Schur-Weyl duality for higher levels.} 
Selecta Math., {\sf 14}, (2008), 1--57.

\bibitem[BK19]{BK19}
V.~Buciumas, H.~Ko,
{\em Quantum polynomial functors from $e$-Hecke pairs},
Math. Z.,  {\sf 292}, (2019), 1--31.

\bibitem[BKLW18]{BKLW18}
H.~Bao, J.~Kujawa, Y.~Li and W.~Wang,
{\em Geometric {S}chur duality of classical type}, 
Transform. Groups, {\sf 23}, (2018), 329--389.

\bibitem[BLM90]{BLM90}
A.~Beilinson, G.~Lusztig, R.~MacPherson,
          {\em A geometric setting for the quantum deformation of $GL_n$},
Duke Math. J., {\sf 61}, (1990), 655--677.

\bibitem[BM93]{BM93}
M.~Brou\'e G.~Malle, 
{\em Zyklotomische Heckealgebren},
Asterisque, {\sf 212},  (1993), 119--189.

\bibitem[BMR98]{BMR98}
M.~Brou\'e, G.~Malle, R.~Rouquier.
{\em Complex reflection groups, braid groups, Hecke algebras},
J. reine angew. Math., {\sf 500}, (1998), 127--190.

\bibitem[BSW20]{BSW20} 
J. Brundan, A. Savage, B. Webster,  
{\em On the definition of quantum Heisenberg category}. 
Algebra Number Theory, {\sf 14 (2)}, (2020), 275--321.



\bibitem[BW18]{BW18}
	H.~Bao, W.~Wang,
	{\em A new approach to Kazhdan-Lusztig theory of type $B$ via quantum symmetric pairs}, Ast\'erisque, {\sf 402}, 
	(2018).  

\bibitem[BWW18]{BWW18}
	H.~Bao, W.~Wang, H.~Watanabe,
	{\em Multiparameter quantum Schur duality of type B}, 
	Proc. Amer. Math. Soc., {\sf 146}, no. 9,  (2018), 3203--3216.
	
\bibitem[CP94]{CP94}
V. Chari, A. Pressley, 
{\em A Guide to Quantum Groups},
 Cambridge University Press, Cambridge, 1994.


\bibitem[CPS96] {CPS96} E. Cline, B.J. Parshall, L.L. Scott, 
{\em Stratifying endomorphism algebras}, 
Mem. Amer. Math. Soc., {\sf 124}, no. 591, (1996). 


\bibitem[CR62]{CR62} 
C.W. Curtis, I. Reiner, 
{\em Representation Theory of Finite Groups and Associative Algebras}, Wiley, New York (1962). 





\bibitem[DEN04]{DEN04}
S.~R.~Doty, K.~Erdmann, D.~K.~Nakano,
{\em Extensions of modules over Schur algebras, symmetric groups and Hecke algebras},
Algebr. Represent. Theory, {\sf 7}, (2004), 67--99.


\bibitem[DJ89]{DJ89}
R.~Dipper, G.~James,
{\em The $q$-Schur algebra},
Proc. London Math. Soc., {\sf 59}, (1989), 23--50.


\bibitem[DJM95]{DJM95}
R.~Dipper, G.~D.~James, E.~Murphy, 
{\em {H}ecke Algebras of Type $B_n$ at Roots of Unity}, 
Proc. Lond. Math. Soc., {\sf 70}, (1995),
505--528.






\bibitem[E22]{E22}
B.~Elias, 
{\em A diamond lemma for Hecke-type algebras}, 
Trans. Amer. Math. Soc., {\sf 375} (2022), 1883--1915.


\bibitem[EK17]{EK17}
A. Evseev, A. Kleshchev,
{\em Turner doubles and generalized Schur algebras},
Adv. Math., {\sf 317}, (2017), 665--717.

%
%

\bibitem[FL$^3$W23]{FLLLW23}
Z. Fan, C. Lai, Y. Li, L. Luo, W. Wang,
{\em Affine Hecke algebras and quantum symmetric pairs}, 
Mem. Amer. Math. Soc., {\sf 281}, no. 1386, (2023).
%

\bibitem[FL$^3$W$^2$20]{FLLLWW20}
Z. Fan, C. Lai, Y. Li, L. Luo, W. Wang, H. Watanabe,
{\em Quantum Schur duality of affine type C with three parameters},
Math. Res. Letters, {\sf 27},  (2020), 79 --114.






\bibitem[GGOR03]{GGOR03}
V.~Ginzburg, N.~Guay, E.~Opdam, R.~Rouquier, 
{\em On the category $\mathcal{O}$ for rational Cherednik algebras},
Invent. Math., {\sf 154}, (2003), 617--651.

\bibitem[GK93]{GK93} 
V.~Ginzburg, S.~Kumar,
{\em Cohomology of quantum groups at roots of unity},
Duke Math. J., {\sf 69}, (1993), 179--198.



\bibitem[Gr97]{Gr97}
R.~M.~Green,
{\em Hyperoctahedral Schur algebras},
J. Algebra,  {\sf 192}, (1997), 418--438.

\bibitem[Gr99]{Gr99}
R.~M.~Green,
\emph{The affine {$q$}-{S}chur algebra}, J. Algebra, {\sf 215}, no.~2, (1999), 379--411.

\bibitem[Hu02]{Hu02}
J.~Hu,
{\em
A Morita equivalence theorem for Hecke algebra $H_q(D_n)$ when $n$ is even},
Manuscripta Math.,  {\sf 108}, (2002), 409--430.

\bibitem[Hu09]{Hu09}
J.~Hu,
{\em
On the decomposition numbers of the Hecke algebra of type $D_n$  when $n$ is even},
J. Algebra, {\sf 321} (3), (2009), 1016--1038.

\bibitem[HM12]{HM12}
J.~Hu, Andrew Mathas, 
{\em
Decomposition numbers for {H}ecke algebras of type $G(r,p,n)$, the $(\epsilon,q)$-separated case}, 
Proc. London Math. Soc., {\sf 104} (5), (2012), 865--926.






\bibitem[J86]{J86}
M.~Jimbo,
{\em A $q$-analogue of $U({\mathfrak g\mathfrak l}(N+1))$, Hecke
algebra, and the Yang-Baxter equation}, 
Lett. Math. Phys., {\sf 11}, 
(1986), 247--252.


\bibitem[JM00]{JM00}
 G.~James, A.~Mathas, 
{\em The Jantzen sum formula for cyclotomic $q$-Schur algebras},
Trans. Amer. Math. Soc.,
{\sf 352}, no. 11, (2000), 5381--5404.



\bibitem[KK86]{KK86}
B. Kostant, S. Kumar,
{\em The nil {H}ecke ring and cohomology of $G/P$ for a {K}ac-{M}oody group $G$},
Adv. Math., {\sf 62}, (1986), 187--237.

\bibitem[KM19]{KM19}
A. Kleshchev, R. Muth,
{\em Affine zigzag algebras and imaginary strata for KLR algebras}. 
Trans. Amer. Math. Soc., {\sf 371(7)}, (2019), 4535--4583.

\bibitem[KSX01]{KSX01} 
S. K{\"o}nig, I.H. Slungard, C. Xi, 
{\em Double centralizer properties, dominant dimension and tilting modules}, 
J. Algebra, {\sf 240}, (2001), 393--412. 





\bibitem[LL21]{LL21}
C.~Lai, L.~Luo,
{\em Schur algebras and quantum symmetric pairs with unequal parameters},
Int. Math. Res. Not.,  {\sf 13}, (2021), 10207--10259.

\bibitem[LNX22]{LNX22}
C.~Lai, D.~K.~Nakano, Z.~Xiang,
{\em On $q$-Schur algebras corresponding to {H}ecke algebras of type B},
Transform. Groups, {\sf 27},  (2022), 983--1024.

\bibitem[Lu89]{Lu89}
G.~Lusztig, 
{\em Affine {H}ecke algebras and their graded version}, 
J. Amer. Math. Soc.,  {\sf 2}, (1989), 599--635.

\bibitem[MM98]{MM98} 
G.~Malle, A.~Mathas,
{\em Symmetric cyclotomic Hecke algebras}, 
J. Algebra, {\sf 205}, (1998) 275--293.




\bibitem[Pa94]{Pa94}
C.~Pallikaros, 
{\em Representations of Hecke algebras of type $D_n$}, 
J. Algebra, {\sf 169}, (1994), 20--48.



\bibitem[Ro08]{Ro08} 
R.~Rouquier,  
{\em$q$-Schur algebras and complex reflection groups},
Mosc. Math.~J., {\sf 8}, (2008) 119--158.

\bibitem[RS20]{RS20}
D.~Rosso, A.~Savage,
{\em Quantum affine wreath algebras}, 
Documenta Math., {\sf 25}, (2020) 425--456.

\bibitem[Sa20]{Sa20}
A.~Savage, 
{\em Affine wreath product algebras},
Int. Math. Res. Not., {\sf 10}, (2020) 2977--3041.

\bibitem[SS21]{SS21}
A.~Savage, J.~Stuart, 
{\em  Frobenius nil{H}ecke algebras},
Pac. J. Math., {\sf 311(2)}, (2021) 455--473.

\bibitem[SS99]{SS99}
M.~Sakamoto, T.~Shoji,
{\em Schur-Weyl reciprocity for Ariki-Koike algebras},
J. Algebra, {\sf 221}, (1998), 293--314.


\bibitem[VV99]{VV99} M. Varagnolo, E. Vasserot,
{\em On the decomposition matrices of the quantized Schur algebra}, Duke Math. J., {\sf 100}, (1999), 267--297.

\bibitem[WW08]{WW08}
 J.~Wan, W.~Wang. 
 {\em Modular representations and branching rules for wreath Hecke algebras},
Int. Math. Res. Not., rnn128, (2008).

\bibitem[Wa07]{Wa07}
W.~Wang,
{\em Spin Hecke algebras of finite and affine types}, 
Adv. in Math., {\sf 212}, (2007), 723--748.
\end{thebibliography}
\end{document}